\theoremstyle{plain}
\newtheorem{thm}{Theorem}
\newtheorem*{thmA}{Theorem A}
\newtheorem*{thmB}{Theorem B}
\newtheorem*{cor}{Corollary}
\newtheorem{lemma}{Lemma}
\newtheorem{prop}{Proposition}
\theoremstyle{remark}
\newcommand{\e}{{\textbf{e}}}
  \def\a{\alpha} \def\b{\beta}  
   \def\C{\mathbb{C}}  \def\Z{\mathbb{Z}} \def\N{\mathbb{N}} \def\Q{\mathbb{Q}} 
\def\a{\alpha} \def\z{\zeta} \def\g{\gamma} \def\o{\omega} \def\th{\theta} \def\l{\ell} \def\t{\tau}     
\def\={\;=\;} \def\X{\textbf{S}} \def\G{\Gamma} \def\h{\frac12} \def\Vt{\widetilde V} \def\O{\text O}
\def\MZV{multiple zeta value}  \def\MZVs{\MZV s}
 \def\bz{\textbf{z}} \def\bs{\textbf{s}}
  \def\AC{3}
\begin{document}

\rightline{IPHT-t19/069}

\vspace{0.3 cm}

\title[Genus-zero and genus-one string amplitudes and special multiple zeta values]{Genus-zero and genus-one string amplitudes \\ and special multiple zeta values}

 
\author{Don Zagier}
\address{Max-Planck-Institut f\"ur Mathematik, Vivatsgasse 7, 53111 Bonn (Germany)}
\email{dbz@mpim-bonn.mpg.de}
\author{Federico Zerbini}
\address{Institut de Physique Th\'eorique (IPhT), CEA-Saclay, Orme des Merisiers, B\^atiment 774, 91191 Gif sur Yvette (France)}
\email{federico.zerbini@ipht.fr}
 

\maketitle
\date{today}

\section*{Abstract}

\noindent In this paper we show that in perturbative string theory the genus-one contribution to formal 2-point amplitudes can be related to the genus-zero contribution to 4-point amplitudes. This is achieved by studying special linear combinations of multiple zeta values that appear as coefficients of the amplitudes. We also exploit our results to relate closed strings to open strings at genus one using Brown's single-valued projection, proving a conjecture of~\cite{BSZ}.

\section{Introduction}\label{Sec:Intro}

\noindent Define two meromorphic functions $V^{({\rm op})}$ and $V^{({\rm cl})}$ on $\,\X:=\,\{(s,t,u)\in\C^3\mid s+t+u=0\}\,$ by
\begin{equation}\label{AA}
V^{({\rm op})}(s,t,u) \= \frac{\G(1+s)\,\G(1+t)}{\G(1-u)}\,,\quad\,\,\, V^{({\rm cl})}(s,t,u) \= \frac{\G(1+s)\,\G(1+t)\,\G(1+u)}{\G(1-s)\,\G(1-t)\,\G(1-u)}\,.
\end{equation}

The first of these is a variant of Veneziano's open bosonic string amplitude~\cite{Veneziano}, while the second is a variant of Virasoro's closed bosonic string amplitude~\cite{Virasoro}. Their study in the context of scattering amplitudes in the late 1960's marked the birth of string theory. The letter ``V'' is a reminder of the two originators, while the superscripts ``(op)'' and ``(cl)'' stand for open and closed strings, respectively. In Section \ref{Sec:Virasoro} we will study the Taylor expansion of $V^{({\rm cl})}(s,t,u)$. Euler's formula for $\log\G(1+x)$ gives
\begin{equation}\label{ExpVirasoro1}
V^{({\rm cl})}(s,t,u)\=\exp\bigg(-2\,\sum_{n\geq 1}\,\frac{\zeta(2n+1)}{2n+1}\,(s^{2n+1}\,+\,t^{2n+1}\,+\,u^{2n+1})\bigg),
\end{equation}
so that each Taylor coefficient of $V^{({\rm cl})}(s,t,u)$ is a polynomial with rational coefficients in the odd zeta values $\z(3)$, $\z(5)$, $\z(7),\,\dots\,$ (see also Proposition~\ref{Proposition 1}).
We will show in Theorem~\ref{Theorem 1} that these Taylor coefficients also belong to the $\mathbb{Q}$-span of the special linear combinations of \MZVs\footnote{Our convention for multiple zeta values is that $\zeta(k_1,\ldots ,k_r)=\sum_{0<n_1<\cdots <n_r}n_1^{-k_1}\cdots n_r^{-k_r}$.}\ defined for $k\geq 2$ and $r\geq 0$ by
\begin{equation}\label{Z}
Z(k,r)\;:=\;\sum_{\substack{\textbf{r}\in\{1,2\}^j,\, j\geq 0\\r_1+\cdots +r_j=r}}2^{\#\{i\,:\,r_i=1\}}\,\zeta(\textbf{r},k),
\end{equation}
the first cases of which are given by
\begin{align*}
 Z(k,0)&\=\z(k)\,, \notag\\
 Z(k,1)&\=2\,\z(1,k)\,, \notag\\
 Z(k,2)&\=\z(2,k)\,+\,4\,\z(1,1,k)\,, \notag\\
 Z(k,3)&\=2\,\z(1,2,k)\,+\,2\,\z(2,1,k)\,+\,8\,\z(1,1,1,k)\,, \notag\\
 Z(k,4)&\=\z(2,2,k)\,+\,4\,\z(1,1,2,k)\,+\,4\,\z(1,2,1,k)\,+\,4\,\z(2,1,1,k)\,+\,16\,\z(1,1,1,1,k)\,.
\end{align*}

This is thus the reverse of what one usually tries to do
in the theory of \MZVs, which is to try to express $\Q$-linear combinations of \MZVs\ as polynomials in the 
more familiar single zeta values.  Here the process is justified for two reasons. On the one hand, the 
expression in terms of \MZVs\ is much more compact than the one in terms of products of single
zeta values (namely, $\,\O(q)\,$ terms rather than $\,\O(p^{q-1})\,$ terms for the coefficients $e_{p,q}$
defined below).  More interestingly, the specific linear combinations of \MZVs\ $Z(k,r)$ that occur turn out to be the same ones that had occurred in an earlier calculation
of Green, Russo and Vanhove~\cite{GRV} in the leading term of certain non-holomorphic
modular functions in the upper half-plane~$D_\l(\tau)$, known in the literature as (two-point) \emph{modular graph functions}, which contribute to the computation of genus-one closed superstring amplitudes. More specifically, $D_\l(\tau)=d_\l(Y)+O(e^{-Y})$ for $Y:=2\pi\,\text{Im}(\tau)\rightarrow +\infty$ and~$d_\l(Y)$ a Laurent polynomial, and the results of the paper~\cite{GRV} and its appendix showed that the coefficients of $d_\l(Y)$ are rational linear combinations of the numbers $Z(k,r)$ (see Proposition~\ref{TeoGRV}). Numerical calculations from~\cite{GRV} suggested the following theorem, which will be proved in Section~\ref{SSec:1stProof}:
\begin{thmA}
The coefficients of the Laurent polynomial $d_\l(Y)$ are polynomials in odd zeta values with rational coefficients.
\end{thmA}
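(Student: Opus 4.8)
The plan is to deduce Theorem A from Theorem~\ref{Theorem 1} together with the explicit genus-one expansion recorded in Proposition~\ref{TeoGRV}, using the exponential formula \eqref{ExpVirasoro1} as the sole source of the odd-zeta-value structure. First I would observe that \eqref{ExpVirasoro1} already settles the genus-zero side: expanding the exponential and writing each power sum $s^{2n+1}+t^{2n+1}+u^{2n+1}$ as a polynomial in the symmetric functions of $s,t,u$ (which is possible because $s+t+u=0$), one sees at once that every Taylor coefficient $e_{p,q}$ of $V^{({\rm cl})}$ is a polynomial in the odd zeta values $\zeta(3),\zeta(5),\dots$ with rational coefficients. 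Thus Theorem A would follow \emph{if} I can show that each coefficient of the Laurent polynomial $d_\l(Y)$ lies in the $\Q$-vector space spanned by the $e_{p,q}$.

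To establish this I would compare two descriptions of $\Q$-linear combinations of the numbers $Z(k,r)$. On the genus-zero side, Theorem~\ref{Theorem 1} expresses each $e_{p,q}$ explicitly as a $\Q$-linear combination of the $Z(k,r)$; packaged into a generating series, this identifies $V^{({\rm cl})}(s,t,u)$ with a specific series built out of the $Z(k,r)$. On the genus-one side, Proposition~\ref{TeoGRV} expresses each coefficient of $d_\l(Y)$ as another $\Q$-linear combination of the \emph{same} numbers $Z(k,r)$. The content of Theorem A then becomes a purely linear-algebraic assertion: the linear forms in the $Z(k,r)$ produced by Green, Russo and Vanhove should themselves be $\Q$-linear combinations of the linear forms $e_{p,q}$ coming from Theorem~\ref{Theorem 1}.

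I would prove this weight by weight. Fixing a weight $w$, both families of coefficients lie in the finite-dimensional space $W_w:=\mathrm{span}_\Q\{Z(k,r)\,:\,k+r=w,\ k\geq 2,\ r\geq 0\}$; inside $W_w$, Theorem~\ref{Theorem 1} cuts out the subspace $E_w$ spanned by the weight-$w$ coefficients $e_{p,q}$, and the task is to show that the weight-$w$ coefficients of all the $d_\l(Y)$ lie in $E_w$. This inclusion is precisely the genus-one/genus-zero correspondence announced in the abstract, and it is the step I expect to be the main obstacle, since it requires reconciling the combinatorial shape of the Green--Russo--Vanhove formula with the very different shape coming from the Gamma-function expansion \eqref{ExpVirasoro1}; in practice I would match the two generating series directly, exhibiting the $d_\l$-series as an explicit $\Q$-linear operation applied to the Virasoro series rather than checking each $Z(k,r)$-coefficient by hand. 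Once the inclusion into $E_w$ is in place, every coefficient of $d_\l(Y)$ is a $\Q$-combination of the $e_{p,q}$, and hence, by the first paragraph, a polynomial in odd zeta values, which proves Theorem A.
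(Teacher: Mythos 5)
Your plan follows the architecture of the paper's first proof (Proposition~\ref{TeoGRV} and Proposition~\ref{Proposition 4} express the coefficients of $d_\l(Y)$ as explicit combinations $\gamma_{n,k}$ of the $Z(k,r)$, and Proposition~\ref{Proposition 5} matches these against the forms~(\ref{AE})), but the step you describe as ``purely linear-algebraic'' is precisely where the real difficulty sits, and your framing of it is incorrect. A concrete counterexample: the formula~(\ref{Formula2pointclosed}) gives the coefficient of $Y^{-1}$ in $d_3(Y)$ as $12\,\zeta(1,3)-3\,\zeta(4)$, a weight-$4$ combination (equivalently $-\tfrac34\gamma_{3,1}$ with $\gamma_{3,1}=4\,Z(4,0)-8\,Z(3,1)$). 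But there are \emph{no} Taylor coefficients $e_{p,q}$ of weight $4$ at all, since $2p+3q=4$ has no solutions with $p\geq0$, $q\geq1$; so the space $E_4$ you want to land in is $\{0\}$, and your claimed inclusion forces this coefficient to vanish. It does vanish --- but only because of Euler's identity $\zeta(1,3)=\zeta(4)/4$, a genuine relation among multiple zeta values that no manipulation of formal linear forms in the symbols $Z(k,r)$ can produce. This is the general pattern: when the matching of linear forms is actually carried out (formula~(\ref{BO})), the numbers $\gamma_{n,k}$ are expressed in terms of the values $e_{p,q}$ with $-q<p<0$ defined formally by~(\ref{AE}) --- for instance $\gamma_{3,1}=4\,e_{-1,2}$ --- and these are \emph{not} Taylor coefficients of $V^{({\rm cl})}$, so the argument in your first paragraph says nothing about them. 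Since the forms~(\ref{AE}) are formally independent (each $e_{p,q}$ contains $Z(2p+2q+1,q-1)$ with coefficient $\pm2$, giving triangularity in $q$), these negative-$p$ terms cannot be eliminated formally: the inclusion you want holds only numerically, via relations among the $Z(k,r)$.

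What is missing is exactly the paper's Theorem~\ref{Theorem 3} (or the weaker corollary of Section~\ref{SSec:epq=0} that $\mathcal{E}_w=\mathcal{F}_w\subset\mathcal{R}$): the numbers $e_{p,q}$ with $-q<p<0$ vanish, and in particular lie in~$\mathcal{R}$. Its proof is not linear algebra over the $Z(k,r)$; it requires the symmetry $f_{\mu,\nu}=f_{\nu,\mu}$ of the numbers~(\ref{A}), which comes from the binomial-sum representation of the complex beta integral (Proposition~\ref{Prop2}), together with Proposition~\ref{PropPoint1} and a rank computation for the polynomials $\lambda_{j,\nu}(w)$. Your closing suggestion to ``match the two generating series directly'' is the right instinct --- it is the paper's second proof, Proposition~\ref{ThmMainClosed}, giving $W^{({\rm cl})}(X,Y)=V^{({\rm cl})}(2X,-X-Y,Y-X)/\bigl(X(X+Y)(Y-X)\bigr)$ --- but that argument does not compare $Z(k,r)$-coefficients against Theorem~\ref{Theorem 1} at all: it rests on the analytic generating series~(\ref{AJ}) for the numbers $Z(k,r)$ and again on Proposition~\ref{Prop2}. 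Either way, an analytic input beyond Theorem~\ref{Theorem 1} and Proposition~\ref{TeoGRV} is indispensable, and it is absent from your proposal.
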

We will actually prove this in two different ways, with each proof providing a stronger result, as they both relate the coefficients of~$d_\l(Y)$ to the Taylor coefficients~$e_{p,q}$ of the Virasoro function~$V^{({\rm cl})}$. A third and different proof of Theorem A was found very recently by D'Hoker and Green~\cite{DG19}. Our first proof exploits the above-mentioned fact that both the coefficients of~$d_\l(Y)$ and the numbers~$e_{p,q}$ belong to the rational vector spaces spanned by the special \MZVs \ ~$Z(k,r)$. Our second proof does not use the results from Section~\ref{Sec:Virasoro} but instead directly relates the coefficients of~$d_\l(Y)$ to the Virasoro function~$V^{({\rm cl})}$. Here we indicate how the latter approach gives a link between open and closed string amplitudes at genus one, which is the topic of the second part of the paper. Specifically, the coefficients of the Laurent polynomials~$d_\l(Y)$ are given by simple linear combinations~$\gamma_{n,k}$ of special \MZVs \ (see Proposition~\ref{Proposition 4}), whose generating series
\begin{equation*}
W^{({\rm cl})}(X,Y)\;:=\;\frac{1}{X\,(X+Y)\,(Y-X)}\;+\;\sum_{n>k>0}\gamma_{n,k}\,X^{n-k-1}\,Y^{2k-2}
\end{equation*}
is shown to  satisfy (Proposition~\ref{ThmMainClosed}) the identity
\begin{equation}\label{IntroWcl}
W^{({\rm cl})}(X,Y)\=\frac{V^{({\rm cl})}(2X,-X-Y,Y-X)}{X\,(X+Y)\,(Y-X)}\,,
\end{equation}
which in particular implies Theorem A. The superscript ``(cl)'' refers once again to the fact that we are considering numbers coming from closed strings.

We remark that a purely number-theoretical consequence of the results of this first part of the paper is to highlight two special subspaces (see Section~\ref{SSec:epq=0})
\begin{equation}\label{calEcalZ1}
\mathcal{E}_w\,:=\,\big{\langle} \,e_{p,q}\;\big{|}\;p\geq 0,\,q\geq 1,\, 2p+3q=w\,\big{\rangle}_{\mathbb{Q}} \quad \subseteq \quad \mathcal{D}_w\,:=\,\big{\langle} \,Z(w-r,r)\;\big{|} \;0\leq r\leq w-2\,\big{\rangle}_{\mathbb{Q}},
\end{equation}
both of dimension~$O(w)$, the first spanned by the Taylor coefficients~$e_{p,q}$ of~$V^{({\rm cl})}$ (see formula~(\ref{AD})) or equivalently by the coefficients~$\gamma_{n,k}$ of the Laurent polynomials~$d_\l(Y)$, and the second spanned by the numbers $Z(k,r)$ from~(\ref{Z}). These vector spaces are contained in the presumably much larger spaces~$\mathcal{R}_w$ and~$\mathcal{Z}_w$ of weight-$w$ polynomials in odd zeta values and \MZVs, respectively\footnote{The dimensions of $\mathcal{R}_w$ and $\mathcal{Z}_w$ conjecturally grow roughly like $e^{\pi\sqrt{w/3}}$ and
$(1.3247\cdots)^w$, respectively, although neither dimension can be proved to be larger than~1 for any~$w$.}, and have interesting structures, studied in Section~\ref{SSec:epq=0}.

We now turn to the open string side of this story. Holomorphic (but not quite modular\footnote{The functions~$B_\l(\tau)$ can be written as special combinations of iterated Eichler integrals of Eisenstein series~\cite{MMV} called \emph{elliptic multiple zeta values}~\cite{Enriquez}.}) analogues~$B_\l(\tau)$ of the functions~$D_\l(\tau)$ were recently introduced in~\cite{BSZ} and related to the computation of genus-one open superstring amplitudes. They are known as (two-point, B-cycle) \emph{holomorphic graph functions}. It was shown in~\cite{BSZ} that $B_\l(\tau)=b_\l(T)+O(e^{-T})$ for $T:=\pi\tau/i\rightarrow +\infty$ and~$b_\l(T)$ a Laurent polynomial with coefficients contained in the ring of \MZVs. In Section~\ref{Sec:Open} we study these Laurent polynomials and we relate them to the genus-zero Veneziano function~$V^{({\rm op})}$. More specifically, the coefficients of~$b_\l(T)$ are simple linear combinations of special \MZVs \  ~$\eta_{n,k}$ (see Theorem~\ref{PropOp} and its corollary) and we will show (Proposition~\ref{propWop}) the identity
\begin{equation}\label{IntroWop}
W^{({\rm op})}(X,Y)\=\bigg(\frac{\sin\big(\pi(X+Y)\big)}{\sin\big(\pi(Y-X)\big)}\,-\,1\bigg)\,\frac{V^{({\rm op})}(2X,-X-Y,Y-X)}{\,2\,X^2\,(X+Y)}\,,
\end{equation}
where $W^{({\rm op})}(X,Y)$ is the generating series of the numbers~$\eta_{n,k}$
\begin{equation*}
W^{({\rm op})}(X,Y)\;:=\;\frac{1}{X\,(X+Y)\,(Y-X)}\;+\;\sum_{n\geq k>0}\eta_{n,k}\,X^{n-k-1}\,Y^{2k-2}.
\end{equation*}

For $V^{({\rm op})}$, as opposed to $V^{(\rm cl)}$, Euler's expansion of $\log\G(1+x)$ gives
\begin{equation}\label{ExpVeneziano1}
V^{({\rm op})}(s,t,u)\=\exp\bigg(\sum_{n\geq 2}\frac{(-1)^n\zeta(n)}{n}\big(s^n+t^n-(-u)^n\big)\bigg),
\end{equation}
implying, together with equation~(\ref{IntroWop}), that also the open string coefficients $\eta_{n,k}$ can be reduced to single zeta values (Theorem~\ref{PropOp}), but this time involving also even zeta values $\zeta(2k)$. Far from being an accident, this is part of a very intriguing pattern which seems to relate open strings to closed strings by just mapping all periods appearing in the open case to the corresponding \emph{single-valued periods}\footnote{More precisely, such a map can be defined for special motivic periods, including motivic multiple zeta values, and it induces a well-defined map on the actual periods only if we assume the period conjecture.}. The latter are given by a single-valued integration pairing between differential forms and dual differential forms, defined by Brown~\cite{BrownSVMZV} by transporting the action of complex conjugation from singular to de Rham cohomology via the comparison isomorphism. A simple special case of this ``single-valued projection'' gives for all $k\geq 1$
\begin{equation}\label{singlevaluedZetas}
{\rm sv}(\zeta(2k))\=0\,, \quad \quad \quad \quad \quad {\rm sv}(\zeta(2k+1))\=2\zeta(2k+1).
\end{equation}
In particular, extending this by linearity to formal power series with rational coefficients and comparing~(\ref{ExpVeneziano1}) with~(\ref{ExpVirasoro1}) we have ${\rm sv}\big(V^{({\rm op})}(s,t,u)\big)=V^{({\rm cl})}(s,t,u)$. Combining this with equations~(\ref{IntroWcl}) and~(\ref{IntroWop}), we prove in Section~\ref{Ssec:ProofEsvConj} the identity ${\rm sv}\big(W^{({\rm op})}(X,Y)\big)=W^{({\rm cl})}(X,Y)$,
or equivalently ${\rm sv}(\eta_{n,k})=\gamma_{n,k}$, which implies our second main result of this article:
\begin{thmB}  
For all $l\geq 1$ we have ${\rm sv}\big(b_\l(X)\big)=d_\l(X)$.
\end{thmB}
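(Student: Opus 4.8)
The plan is to deduce Theorem B from the generating-series identity $\mathrm{sv}\big(W^{({\rm op})}(X,Y)\big)=W^{({\rm cl})}(X,Y)$, which the introduction has already announced will be proved in Section~\ref{Ssec:ProofEsvConj}. So the first step is to establish that identity. I would combine the two structural results stated in the excerpt: equation~(\ref{IntroWop}) expressing $W^{({\rm op})}$ in terms of the Veneziano function $V^{({\rm op})}$, and equation~(\ref{IntroWcl}) expressing $W^{({\rm cl})}$ in terms of the Virasoro function $V^{({\rm cl})}$. Since we are told that $\mathrm{sv}\big(V^{({\rm op})}(s,t,u)\big)=V^{({\rm cl})}(s,t,u)$ (via the single-valued projection rules in~(\ref{singlevaluedZetas}), applied termwise to the exponential expansions~(\ref{ExpVeneziano1}) and~(\ref{ExpVirasoro1})), the main task is to check that applying $\mathrm{sv}$ to the right-hand side of~(\ref{IntroWop}) yields the right-hand side of~(\ref{IntroWcl}).

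The delicate point here is that $\mathrm{sv}$ is $\Q$-linear and multiplicative on (motivic) periods, so the rational prefactors involving $X$, $Y$, and the trigonometric ratio $\sin(\pi(X+Y))/\sin(\pi(Y-X))$ must be handled correctly. The factor $1/\big(2X^2(X+Y)\big)$ is a rational function of $X,Y$ and so is fixed by $\mathrm{sv}$. The subtle factor is the trigonometric one: expanding $\sin(\pi x)$ in its Taylor series produces even zeta values $\zeta(2k)$ only through $\pi^{2}$ (indeed $\sin(\pi x)=\pi x\prod_{n\ge1}(1-x^2/n^2)$), and under $\mathrm{sv}$ we have $\mathrm{sv}(\zeta(2k))=0$, i.e. $\mathrm{sv}(\pi^2)=0$ in the relevant motivic sense. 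I would therefore argue that the entire correction term $\big(\sin(\pi(X+Y))/\sin(\pi(Y-X))-1\big)$ maps to zero under $\mathrm{sv}$, because it is a power series in $\pi^2$ with no constant term, so that $\mathrm{sv}\big(W^{({\rm op})}\big)$ reduces to $\mathrm{sv}\big(V^{({\rm op})}(2X,-X-Y,Y-X)\big)/\big(X(X+Y)(Y-X)\big)$, which by the single-valued projection of the Veneziano function equals exactly the right-hand side of~(\ref{IntroWcl}). This gives $\mathrm{sv}\big(W^{({\rm op})}(X,Y)\big)=W^{({\rm cl})}(X,Y)$, hence $\mathrm{sv}(\eta_{n,k})=\gamma_{n,k}$ after comparing coefficients of $X^{n-k-1}Y^{2k-2}$.

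Finally I would translate the coefficient-level statement $\mathrm{sv}(\eta_{n,k})=\gamma_{n,k}$ back into the Laurent-polynomial statement of Theorem B. Since the coefficients of $b_\l(T)$ are the explicit linear combinations of the $\eta_{n,k}$ recorded in Theorem~\ref{PropOp} and its corollary, while the coefficients of $d_\l(Y)$ are the corresponding combinations of the $\gamma_{n,k}$ from Proposition~\ref{Proposition 4}, and since $\mathrm{sv}$ is $\Q$-linear, applying $\mathrm{sv}$ coefficientwise to $b_\l$ produces exactly $d_\l$. I expect the main obstacle to be the careful justification that $\mathrm{sv}$ annihilates the trigonometric correction factor and commutes with the formal-power-series manipulations: one must work at the level of motivic periods (as flagged in the footnote about the period conjecture) so that $\mathrm{sv}$ is genuinely a ring homomorphism killing $\pi^2$, and then confirm that the single-valued projection really does intertwine the two product expansions~(\ref{ExpVeneziano1}) and~(\ref{ExpVirasoro1}) term by term.
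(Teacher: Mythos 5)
Your overall route is the same as the paper's: prove ${\rm sv}\big(W^{({\rm op})}(X,Y)\big)=W^{({\rm cl})}(X,Y)$ by combining~(\ref{IntroWop}), (\ref{IntroWcl}) and ${\rm sv}\big(V^{({\rm op})}\big)=V^{({\rm cl})}$, then compare coefficients to get ${\rm sv}(\eta_{n,k})=\gamma_{n,k}$ and pass back to the Laurent polynomials through the generating series of Theorem~\ref{PropOp} and Proposition~\ref{Proposition 4}. However, there is a genuine error in your treatment of the trigonometric factor, and it sits exactly at the crux of the argument. The ratio of sines is \emph{not} ``a power series in $\pi^2$ with no constant term'': writing
\begin{equation*}
\frac{\sin\big(\pi(X+Y)\big)}{\sin\big(\pi(Y-X)\big)}
  = \frac{X+Y}{Y-X}\,\prod_{n\geq 1}\frac{1-(X+Y)^2/n^2}{1-(Y-X)^2/n^2}\,,
\end{equation*}
you see that the two leading factors of $\pi$ \emph{cancel}, so the ratio has a nontrivial $\pi^0$-part, namely the rational function $\tfrac{X+Y}{Y-X}$, while only the remaining terms carry positive even powers of $\pi$. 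Consequently
\begin{equation*}
{\rm sv}\Bigg(\frac{\sin\big(\pi(X+Y)\big)}{\sin\big(\pi(Y-X)\big)}\,-\,1\Bigg)
  = \frac{X+Y}{Y-X}\,-\,1 = \frac{2X}{Y-X}\,,
\end{equation*}
not $0$. This is precisely the computation in the paper's proof of Theorem~\ref{thmsvg1}.

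Note that your claim is also internally inconsistent: if the correction factor were annihilated by ${\rm sv}$, then multiplicativity would force ${\rm sv}\big(W^{({\rm op})}\big)=0$, whereas you assert in the same sentence that it equals ${\rm sv}\big(V^{({\rm op})}(2X,-X-Y,Y-X)\big)/\big(X(X+Y)(Y-X)\big)$. The factor $2X/(Y-X)$ produced by the single-valued projection of the sine ratio is exactly what converts the prefactor $1/\big(2X^2(X+Y)\big)$ in~(\ref{IntroWop}) into the prefactor $1/\big(X(X+Y)(Y-X)\big)$ in~(\ref{IntroWcl}); without it the two sides cannot match. Once this step is corrected, your argument coincides with the paper's proof (up to the minor bookkeeping that the factor $e^{-\zeta(2)s/T}$ in~(\ref{genserbl}) maps to $1$ under ${\rm sv}$, and that the boundary terms $\eta_{n,n}$ must be checked to map to $\gamma_{n,n}:=0$, both of which are handled by the same generating-series comparison).
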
 
This is a special case of a general conjecture that appeared in~\cite{BSZ} relating open and closed string amplitudes at genus one via Brown's single-valued map.

Equations~(\ref{IntroWcl}) and~(\ref{IntroWop}) yield also analogues of the KLT formula\footnote{This formula was discovered by Kawai, Lewellen and Tye in~\cite{KLT}. It expresses $n$-point genus-zero closed string amplitudes as ``double copies'' of genus-zero open string amplitudes.} for the 2-point graph functions appearing in genus-one superstring amplitudes (see Section~\ref{Ssec:KLT}). Finding higher-genus analogues of the KLT formula is one of the main open problems in this area, and we hope that our result may point towards a genus-one generalization.

We conclude this introduction by indicating briefly what is the physical relevance of our results. 
The two functions in~(\ref{AA}) occur in the lowest-order (i.e.\ genus zero) 
approximations to 4-gluon and 4-graviton massless state scattering in superstring theory, respectively~\cite{GSW}. In this context, the variables $s$, $t$ and $u$, called \emph{Mandelstam variables}, are proportional to the scalar products of 
pairs of the momentum vectors of the four scattering particles. Because these four momenta have  
Minkowski norm~0 (the particles are massless and on-shell) and sum to~0 (momentum conservation), there are only
three distinct scalar products, with sum~0. The proportionality constant is a rational multiple of the inverse string tension $\alpha'$, which makes the Mandelstam variables dimensionless. By contrast, the functions $B_\l(\tau)$ and $D_\l(\tau)$ concern 2-gluon and 2-graviton genus-one superstring amplitudes, respectively. They do not appear to have a physical meaning by themselves, because they would depend on just one identically vanishing Mandelstam variable, but they contribute to the 4-point genus-one amplitude~\cite{BGS, BSZ, GRV}.

The possible physical interest for the results obtained in this paper is twofold. On the one hand, we have connected string amplitudes of different genera, a phenomenon which is possibly related to a physical prediction called ``unitarity'' as well as to the fact that, in the limit $\rm{Im}(\tau)\rightarrow i\infty$, the torus~$\mathcal{E}_\tau$ with $n$ marked points can be thought of as a sphere with $n+2$ marked points, two of which are colliding. On the other hand, Theorem~B gives a first genus-one confirmation, after the recent proofs at genus zero for any number of points~\cite{BrownDupont, SchlSchn, VanZerb18}, that closed string amplitudes may be obtained from open string amplitudes using Brown's construction of single-valued periods. It would be interesting to see if this can be explained as yet another ``double-copy relation'' between gauge theories and gravity.
We can summarize how our results fit into the string theory context with the following diagram\footnote{We denote by $A_{g,n}^{(\cdot)}$ the $n$-point genus-$g$ configuration-space contribution to the superstring amplitude, and by writing $n=2$ we formally interpret the $n=2$ contribution to the $n=4$ amplitude as a 2-point amplitude.}:
\[
\begin{tikzcd}
A_{0,4}^{({\rm op})}(s,t) \arrow[r, "{\rm sv}"]
& A_{0,4}^{({\rm cl})}(s,t) \\
A_{1,2}^{({\rm op})}(s,\tau) \arrow[r, "{\rm sv}"] \arrow[u, "{\rm \tau\rightarrow i\infty}"]
& A_{1,2}^{({\rm cl})}(s,\tau) \arrow[u, "{\rm \tau\rightarrow i\infty}"]
\end{tikzcd}
\]

\section{The Virasoro function and special multiple zeta values}\label{Sec:Virasoro}

\noindent As already mentioned, the Virasoro function~$V^{({\rm cl})}$ from~(\ref{AA}) appears in the computation of the genus-zero 4-graviton scattering amplitude in Type~II superstring theory. More specifically, the latter is essentially given by an integral over the complex projective line\footnote{The perturbative expansion of $n$-point closed superstring amplitudes is a series whose $g$-th coefficient is an integral over the moduli space of~$n$-punctured super Riemann surfaces of genus~$g$. Such integrals, in all cases considered in this article, reduce to integrals over the moduli space of ordinary Riemann surfaces $\mathfrak{M}_{g,n}$~\cite{DonagiWitten}. Therefore if $g=0$ and $n=4$ the integration domain is $\mathbb{P}^1_\mathbb{C}$.}, known as the \emph{complex beta integral}, of the form
\begin{equation}\label{ComplexBeta}
\beta_{\mathbb{C}}(s,t)\=-\frac{1}{2\pi i}\int_{\mathbb{P}^1_\mathbb{C}} \,|z|^{2s-2}\,|1-z|^{2t-2}\,dz\,d\overline{z},
\end{equation}
which converges absolutely for $\text{Re}(s),\text{Re}(t)>0$ and $\text{Re}(s+t)<1$. It is related to the Virasoro function by the equation
\begin{equation}\label{betaShapiro}
\beta_{\mathbb{C}}(s,t)\=-\frac{u}{st}\,\,V^{({\rm cl})}(s,t,u),
\end{equation}
where we pass from the minimal set of independent Mandelstam variables $s$ and $t$ to the more symmetric set of variables $(s,t,u)\in\X$, with $\X$ as in the introduction. Indeed, using polar coordinates, the binomial theorem, the substitution $r^2=v/(1-v)$,
the usual beta integral, the reflection formula for $\G(x)$, and Gauss's formula for $_2F_1(a,b;c;1)$ we can write
\begin{align*}
\beta_{\mathbb{C}}(s,t) &\= \frac1\pi \int_0^\infty r^{2s-1}\,\int_0^{2\pi}(r^2-2r\cos\th+1)^{t-1}\,d\th\;dr \notag\\
 &\= \sum_{n=0}^\infty\binom{t-1}{2n}\,\cdot\,2\int_0^\infty r^{2s+2n-1}(r^2+1)^{t-2n-1}dr\,\cdot\,
   \frac1{2\pi}\,\int_0^{2\pi}\bigl(e^{i\th}+e^{-i\th})^{2n}\,d\th \notag\\
 &\=\sum_{n=0}^\infty\,\frac{\G(t)}{(2n)!\,\G(t-2n)}\; \frac{\G(s+n)\,\G(1+u+n)}{\G(2n-t+1)}\;\binom{2n}n 
 \= \frac{\G(s)\,\G(1+u)}{\G(1-t)}\,_2F_1(s,1+u;1;1)\notag \\
 &\= \frac{\G(s)\,\G(t)\,\G(1+u)}{\G(1-s)\,\G(1-t)\,\G(-u)}\=-\frac{u}{st}\,\,\frac{\G(1+s)\,\G(1+t)\,\G(1+u)}{\G(1-s)\,\G(1-t)\,\G(1-u)}\;.  
\end{align*}
In particular the complex beta integral, like the real one, is a quotient of products of gamma functions. This computation, familiar to physicists but less so to mathematicians, is due to Shapiro~\cite{Shapiro}. Studying the Taylor expansion at the origin of $V^{({\rm cl})}$ is therefore equivalent, from the physical viewpoint, to studying the ``$\alpha'$-expansion'' as the inverse string tension $\alpha'\rightarrow 0$, also known as ``low-energy expansion'', whose coefficients give the higher order string theory corrections to the corresponding field theory (which is, in this case, supergravity).

\subsection{The Taylor expansion of $V^{({\rm cl})}$}\label{Ssec:TaylorVira}

The function defined by~(\ref{AA}) has the following properties: 
\begin{itemize}
\item $\,V^{({\rm cl})}(s,t,u)\,$ is symmetric in $s$, $t$ and $u$\,;
\item $\,V^{({\rm cl})}(s,t,u)=1\,$ if $\,stu=0\,$;
\item $\,V^{({\rm cl})}(s+1,t-1,u)=\dfrac{s(s+1)}{t(t-1)}\,V^{({\rm cl})}(s,t,u)\,$;
\item $\,V^{({\rm cl})}(s,t,u)V^{({\rm cl})}(-s,-t,-u)=1\,$;
\item $\,V^{({\rm cl})}(s,t,u)\,$ has simple poles along the lines $\,s=-n$, $t=-n$, $u=-n\,$ with $n\in\N$,
simple zeros along the lines $\,s=n$, $t=n$, $u=n\,$ with $n\in\N$, and no other zeros or poles.
\end{itemize}
All of these properties follow immediately from the definition.  (For the second, note that if, for instance,
$u=0$, then $1+u=1-u$ and $1\pm s=1\mp t$.) The first one implies that $V^{({\rm cl})}(s,t,u)$ can be
written as a function $\Vt^{({\rm cl})}(S,T)$ of the new variables
\begin{align}\label{AB}
S&\=-\sigma_2(s,t,u)\=\h(s^2+t^2+u^2)\=s^2+st+t^2\,, \notag\\
T&\=\,-\sigma_3(s,t,u)\=-stu\=st(s+t)\,,
\end{align}
with the standard notation $\sigma_i$ for the $i$-th elementary symmetric polynomial, here with $\sigma_1=0$. This new function $\Vt^{({\rm cl})}$ then has poles and zeros along the lines $T=-nS+n^3$ and $T=nS-n^3$,
respectively, where again~$n$ runs over~$\N$.  The Taylor expansion of its logarithm at the origin
is described by the following proposition, in which~$\z(s)$ denotes the Riemann zeta function.
\begin{prop}\label{Proposition 1}
For $S,\,T\in\C$ with $|S|+|T|<1$ one has
\begin{equation}\label{AC}
\Vt^{({\rm cl})}(S,T)\=\exp\bigg(2\sum_{\substack{p,\,q\geq 0\\ \text{q odd}}}
   \binom{p+q-1}{p}\,\z(2p+3q)\,\frac{S^p\,T^q}q\bigg)\;.
\end{equation}
\end{prop}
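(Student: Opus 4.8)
The plan is to start from the given Euler expansion~(\ref{ExpVirasoro1}) and match, on the two sides, the terms carrying each odd zeta value. Writing $p_m:=s^m+t^m+u^m$ for the power sums, the exponent in~(\ref{ExpVirasoro1}) is $-2\sum_{n\geq1}\frac{\z(2n+1)}{2n+1}\,p_{2n+1}$. Since $s+t+u=0$ we have $\sigma_1=0$, $\sigma_2=-S$ and $\sigma_3=-T$, so $s,t,u$ are the roots of $X^3-SX+T$ and each $p_m$ is a polynomial in $S$ and $T$, homogeneous of weight $m$ when $S$ and $T$ carry weights $2$ and $3$. Because $\z(m)$ (for odd $m\geq3$) occurs on the left only in the single term $n=(m-1)/2$, and on the right~(\ref{AC}) only through $\z(2p+3q)$ with $2p+3q=m$, the whole proposition reduces to the purely algebraic identity
\begin{equation*}
\frac{p_m}{m}\=-\sum_{\substack{p,\,q\geq0,\ q\ \mathrm{odd}\\2p+3q=m}}\binom{p+q-1}{p}\,\frac{S^p\,T^q}{q}\qquad(m\ \mathrm{odd},\ m\geq3).
\end{equation*}

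To establish this for all odd $m$ at once I would pass to a generating function in an auxiliary variable $z$. From $\prod_{\alpha\in\{s,t,u\}}(1-\alpha z)=1-Sz^2+Tz^3$ and logarithmic differentiation one gets $\sum_{m\geq1}\frac{p_m}{m}z^m=-\log(1-Sz^2+Tz^3)$. Taking the odd part in $z$ replaces the right-hand side by $\frac12\log\frac{1-Sz^2-Tz^3}{1-Sz^2+Tz^3}$. Factoring out $1-Sz^2$ and setting $C:=Tz^3/(1-Sz^2)$, the elementary expansions $\frac12\log\frac{1-C}{1+C}=-\sum_{q\ \mathrm{odd}}C^q/q$ and $(1-Sz^2)^{-q}=\sum_{p\geq0}\binom{p+q-1}{p}(Sz^2)^p$ turn this into $-\sum_{p\geq0,\,q\ \mathrm{odd}}\binom{p+q-1}{p}\frac{S^pT^q}{q}\,z^{2p+3q}$. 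Comparing coefficients of $z^m$ yields exactly the displayed identity.

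Substituting this identity into the exponent of~(\ref{ExpVirasoro1}) and reindexing the double sum by $(p,q)$ with $m=2p+3q$ produces formula~(\ref{AC}) as an identity of formal power series in $S$ and $T$. To promote it to the analytic statement on $\{|S|+|T|<1\}$ I would verify two routine points. First, the series in~(\ref{AC}) converges absolutely there: since $1\leq\z(2p+3q)\leq\z(3)$ it is dominated by $\z(3)\sum_{p\geq0,\,q\ \mathrm{odd}}\binom{p+q-1}{p}\frac{|S|^p|T|^q}{q}$, and by the second paragraph (evaluated at $z=1$, with $S,T$ replaced by $|S|,|T|$) this majorant equals $\frac{\z(3)}2\log\frac{1-|S|+|T|}{1-|S|-|T|}$, which is finite precisely when $|S|+|T|<1$. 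Second, $\Vt^{({\rm cl})}$ is holomorphic and nowhere zero on this region, since its zeros and poles lie on the lines $T=nS-n^3$ and $T=-nS+n^3$ with $n\in\N$, the nearest of which ($n=1$) are $S-T=1$ and $S+T=1$; every such line satisfies $|S|+|T|\geq1$ and is thus avoided. Hence $\log\Vt^{({\rm cl})}$ and the sum in~(\ref{AC}) agree as convergent power series on $\{|S|+|T|<1\}$.

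The one genuinely non-routine step is the generating-function computation of the second paragraph, and its crux is the idea of extracting the odd part in $z$ and then factoring out $1-Sz^2$; once this is done the identification with the binomial sum is automatic. The reduction to a coefficient-wise identity and the convergence estimates are straightforward bookkeeping.
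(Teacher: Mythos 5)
Your proof is correct, but it takes a genuinely different route from the paper's. The paper never invokes the Euler expansion~(\ref{ExpVirasoro1}): instead it uses the Weierstrass product for $\Gamma$ to write $V^{({\rm cl})}(s,t,u)=\prod_{n\ge1}\frac{n^3-nS+T}{n^3-nS-T}$, expands the logarithm of each factor via $\frac12\log\frac{1+C}{1-C}=\sum_{q\ \mathrm{odd}}C^q/q$ with $C=T/(n^3-nS)$ followed by the negative binomial series, and only at the end produces $\zeta(2p+3q)$ by summing over~$n$. You take~(\ref{ExpVirasoro1}) as the starting point and reduce the proposition to a Newton-type symmetric-function identity expressing the odd power sums $p_m$ of the roots of $X^3-SX+T$ in terms of $S$ and~$T$, proved with an auxiliary generating variable~$z$. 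The two arguments are close cousins: your variable $z$ plays exactly the role of $1/n$ in the paper (your generating identity evaluated at $z=1/n$, $S\mapsto S$, $T\mapsto T$ is precisely the paper's per-factor expansion), and both hinge on the same two elementary series; the summation over~$n$ that creates the zeta values is, in your version, outsourced to~(\ref{ExpVirasoro1}). What your route buys: the combinatorial content is isolated as a clean polynomial identity, and you are more careful than the paper about analytic matters --- the majorant computation showing absolute convergence exactly on $\{|S|+|T|<1\}$, the location of the zeros and poles of $\widetilde V^{({\rm cl})}$ relative to that region, and the identity-theorem step promoting the formal identity to an analytic one. What the paper's route buys: it is shorter and self-contained, working with $S,T$ from the first line, so it needs no separate justification of the passage from the $(s,t,u)$-expansion to the $(S,T)$-expansion and no discussion of the $6$-to-$1$ correspondence between the two sets of variables.
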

\begin{proof} From the Weierstrass product expression of $\Gamma$ we can write $V^{({\rm cl})}(s,t,u)$ as an absolutely convergent\footnote{This is true because $s+t+u=0$.} infinite product:
\begin{equation*}
V^{({\rm cl})}(s,t,u) \=\prod_{n=1}^\infty\frac{(n-s)(n-t)(n-u)}{(n+s)(n+t)(n+u)}
\=\prod_{n=1}^\infty \frac{n^3-nS+T}{n^3-nS-T} \;,
\end{equation*}
But for $n\in\N$ and $n|S|+|T|<n^3$ we have
\begin{equation*}
\log\biggl(\frac{n^3-nS+T}{n^3-nS-T}\biggr) 
  \=2\,\sum_{\substack{q\geq 1\\ \text{q odd}}}\frac1q\,\biggl(\frac T{n^3-nS}\biggr)^q 
 \=2\,\sum_{\substack{p,\,q\geq 0\\ \text{q odd}}}\frac1q\,\binom{p+q-1}p\,\frac{S^p\,T^q}{n^{2p+3q}}\;.
\end{equation*}
Equation~(\ref{AC}) follows immediately.\\
\end{proof}
\begin{cor}
The function $V^{({\rm cl})}(s,t,u)=\Vt^{({\rm cl})}(S,T)$ has an expansion
\begin{equation}\label{AD}
\Vt^{({\rm cl})}(S,T) \= 1\;+\; \sum_{p\ge0,\;q\ge1}e_{p,q}\,S^p\,T^q  
\end{equation}
with coefficients $e_{p,q}$ in the ring $\mathcal{R}$ of odd Riemann zeta values.
\end{cor}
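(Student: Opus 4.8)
The plan is to derive the expansion in (\ref{AD}) directly from Proposition~\ref{Proposition 1}, whose exponential formula (\ref{AC}) already displays $\widetilde V^{({\rm cl})}(S,T)$ as the exponential of a power series in $S$ and $T$. The crucial observation is that in (\ref{AC}) the summation runs only over \emph{odd} values of $q$, and each monomial $S^pT^q$ is weighted by the single zeta value $\zeta(2p+3q)$. Since $q$ is odd and at least $1$ (the term $q=0$ does not appear, the inner sum being empty), the argument $2p+3q$ is always odd: indeed $2p$ is even and $3q$ is odd when $q$ is odd, so their sum is odd. Hence every zeta value occurring in the exponent is an \emph{odd} zeta value $\zeta(3),\zeta(5),\zeta(7),\dots$, and in particular lies in the ring $\mathcal{R}$.

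Granting this, I would argue as follows. Write the exponent in (\ref{AC}) as a formal power series $F(S,T)=\sum_{p,q}c_{p,q}\,S^pT^q$ whose coefficients $c_{p,q}=2\binom{p+q-1}{p}\zeta(2p+3q)/q$ (for $q$ odd, and $0$ otherwise) all lie in $\mathcal{R}$, and which has no constant term since the sum excludes $(p,q)=(0,0)$; in fact $F$ has no $T^0$ terms at all. Then $\widetilde V^{({\rm cl})}=\exp(F)=\sum_{m\ge0}F^m/m!$. Because $\mathcal{R}$ is a ring containing $\mathbb{Q}$, each power $F^m$ has coefficients in $\mathcal{R}$, and dividing by the rational number $m!$ keeps them in $\mathcal{R}$. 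The only remaining point is that the coefficient $e_{p,q}$ of any fixed monomial $S^pT^q$ is a \emph{finite} $\mathcal{R}$-linear combination of products of the $c_{p',q'}$: since $F$ has no constant term, only finitely many $m$ contribute to a given monomial (those with $m\le p+q$), so the sum defining $e_{p,q}$ is finite and lies in $\mathcal{R}$. Finally, the absence of any $T^0$ terms in $F$ forces the constant term of $\exp(F)$ to be $1$ and shows that every nonconstant monomial carries a factor of $T$, which is why the sum in (\ref{AD}) starts at $q\ge1$.

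The main point requiring care is really just the parity bookkeeping together with the finiteness of the coefficient extraction; there is no genuine analytic obstacle, since convergence for $|S|+|T|<1$ is already supplied by Proposition~\ref{Proposition 1}. I would therefore expect the proof to be short, essentially a formal manipulation: the one step worth stating explicitly is the claim that $2p+3q$ is odd whenever $q$ is odd, which guarantees that only members of $\mathcal{R}$ enter, and the observation that $\mathcal{R}$ being a $\mathbb{Q}$-algebra is exactly what is needed to close the expansion under the exponential.
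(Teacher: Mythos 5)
Your proof is correct and is exactly the argument the paper intends: the corollary is an immediate consequence of Proposition~\ref{Proposition 1}, since the exponent in~(\ref{AC}) involves only $\zeta(2p+3q)$ with $q$ odd (hence odd arguments $\geq 3$), and expanding the exponential of a constant-term-free series with coefficients in the $\mathbb{Q}$-algebra $\mathcal{R}$ keeps every (finitely computed) Taylor coefficient in $\mathcal{R}$. The paper leaves this expansion implicit and proceeds directly to the explicit formulas for $e_{p,1}, e_{p,2}, e_{p,3}$; your write-up just makes the parity and finiteness bookkeeping explicit.
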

More explicitly, by expanding the exponential in~(\AC), we find the formulas
\begin{align*}
   e_{p,1}&\= 2\,\z(2p+3)\,, \\
    e_{p,2}&\= 2\sum_{\substack{p_1,\,p_2\geq 0\\ p_1+p_2=p}}\z(2p_1+3)\,\z(2p_2+3)\,, \\
    e_{p,3}&\= \frac43\sum_{\substack{p_1,\,p_2,\,p_3\geq 0\\ p_1+p_2+p_3=p}}\,\z(2p_1+3)\,\z(2p_2+3)\,\z(2p_3+3)
         \,+\,\frac{(p+1)(p+2)}3\,\z(2p+9)\,,
\end{align*}
for the first few values of~$q$, and in general a formula expressing $e_{p,q}$ as a linear combination of products of
at most $q$ odd Riemann zeta values, always with total weight $2p+3q$.

\subsection{The Taylor coefficients of $V^{({\rm cl})}$ as multiple zeta values}\label{SSec:CoeffVira}

The formulas just given grow rapidly in complexity as~$q$ grows, with $e_{p,q}$ having $\,\O(p^{q-1})\,$ terms.
But by using the ``odd summation formula'' given in~\cite{GKZ}, one can rewrite the formula for $e_{p,2}$ in the form
\begin{equation*}
e_{p,2}\= (2p+3)\,\z(2p+6)\;-\;4\,\z(1,2p+5)\,.
\end{equation*}
This suggests that perhaps also the coefficients $e_{p,q}$ for larger values of~$q$ might be more
simply expressible in terms of \MZVs, and indeed in calculations carried out with Herbert Gangl
we found experimentally (for many~$p$ and numerically to high precision) the formulas
\begin{align*}
e_{p,3}&\=(p+1)(p+3)\,\z(2p+9)\,-\,2\,(2p+5)\,\z(1,2p+8)\,+\,2\,\z(2,2p+7)\,+\,8\,\z(1,1,2p+7)\,,\notag\\
e_{p,4}&\=\frac16\,(p+1)(p+2)(2p+9)\,\z(2p+12)\,-\,2\,(p^2+6p+10)\,\z(1,2p+11)\,+\,(2p+7)\,\z(2,2p+10) \notag\\
  &\,+\,4\,(2p+7)\,\z(1,1,2p+10)\,-\,4\,\z(1,2,2p+9)\,-4\,\z(2,1,2p+9)\,-\,16\,\z(1,1,1,2p+9)
\end{align*}
for $q=3$ and $q=4$.  Looking at these formulas carefully, one observes that the last two terms in the 
formula for $e_{p,3}$ and the last five terms in the formula for $e_{p,4}$ can be written in the form $2Z(2p+7,2)$
and $(2p+7)Z(2p+10,2)-2Z(2p+9,3)$, where $Z(k,r)$ denotes the special sum of \MZVs\ of weight $k+r$ defined by~(\ref{Z}). Now looking for a similar formula for $q=5$, we find
\begin{align*}
 e_{p,5} &\= \frac1{12}(p+1)(p+2)(p+3)(p+6)\,Z(2p+15,0) \,-\, \frac16(2p+11)(p^2+5p+12)\,Z(2p+14,1) \notag \\
      &\quad \; +\, (p^2 + 8p + 19)\,Z(2p+13,2) \,-\, (2p+9)\,Z(2p+12,3) \,+\, 2\,Z(2p+11,4)\,.
\end{align*}
On the basis of these special cases we (independently) conjectured the general formula for the coefficients
$e_{p,q}$ given in the following theorem, whose proof is the main object of this section of the paper.  
\begin{thm}\label{Theorem 1}
For all $p\ge0$ and $q\ge1$, the coefficient $e_{p,q}$ in~(\ref{AD}) is an integral
linear combination of multiple zeta values of weight $2p+3q$ and depth $\le q$ with last argument
larger than $2p+2q$ and all other arguments equal to~$1$ or~$2$.  More precisely, we have
\begin{equation}\label{AE}
e_{p,q}\=\sum_{r=0}^{q-1}(-1)^r\,C(p,q,r)\,Z(2p+3q-r,r)
\end{equation}
where $Z(k,r)$ is defined by~(\ref{Z}) and the coefficients $C(p,q,r)$ by the generating function
\begin{equation}\label{AF}
\sum_{q=r+1}^\infty C(p,q,r)\,y^{q-1}\=\frac{2+y}{(1-y)^{p+1}}\,\biggl(\frac{1-\sqrt{1-4y}}2\biggr)^r
\qquad(r\in\Z_{\ge0},\,\,\,p\in\mathbb{Z})\,.
\end{equation}
\end{thm}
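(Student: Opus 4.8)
The plan is to prove the equivalent assertion that the generating series of the right‑hand side of~(\ref{AE}) equals $\Vt^{({\rm cl})}(S,T)-1$, and then to recover~(\ref{AE})--(\ref{AF}) and the qualitative claims by reading off the coefficient of $S^pT^q$. Throughout I set $x_N:=S/N^2$ and $w_N:=T/N^3$, and let $c(w):=\tfrac12\bigl(1-\sqrt{1-4w}\bigr)$ be the power series characterised by $c-c^2=w$; its appearance is forced by the Catalan‑type factor in~(\ref{AF}). The first ingredient is a closed series form for the numbers $Z(k,r)$. Writing each summand $\zeta(r_1,\dots,r_j,k)$ in~(\ref{Z}) as the iterated sum $\sum_{0<n_1<\cdots<n_j<N}n_1^{-r_1}\cdots n_j^{-r_j}N^{-k}$ and summing over all compositions $(r_1,\dots,r_j)$ with parts in $\{1,2\}$, the contribution of each index $m<N$ is $1+2y/m+y^2/m^2=(1+y/m)^2$, so that
\begin{equation*}
\sum_{r\ge0}Z(k,r)\,y^r\=\sum_{N\ge1}\frac1{N^{k}}\prod_{m=1}^{N-1}\Bigl(1+\frac ym\Bigr)^{2}\=\sum_{N\ge1}\frac1{N^{k}}\binom{N+y-1}{N-1}^{2},
\end{equation*}
a formula one checks against the listed values $Z(k,0)=\zeta(k)$, $Z(k,1)=2\zeta(1,k),\dots$. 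This perfect‑square factorisation is the linchpin of the argument.

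Next I resum the right‑hand side of~(\ref{AE}). Inserting the series above into $\sum_{p\ge0,q\ge1}S^pT^q\sum_{r=0}^{q-1}(-1)^rC(p,q,r)\,Z(2p+3q-r,r)$, the powers of $N$ assemble into $x_N^p\,w_N^q\,(-N)^r$. Summing over $q$ by means of~(\ref{AF}) inserts the factor $c(w_N)^r$; the sum over $r$ then collapses $\sum_r(-Nc(w_N))^r[y^r]P_N(y)$ to the value of the polynomial $P_N(y):=\prod_{m<N}(1+y/m)^2$ at $y=-Nc(w_N)$; and the remaining sum over $p$ is geometric. This produces the closed form
\begin{equation*}
\sum_{p\ge0,\,q\ge1}\Bigl(\sum_{r=0}^{q-1}(-1)^rC(p,q,r)\,Z(2p+3q-r,r)\Bigr)S^pT^q\=\sum_{N\ge1}\frac{w_N(2+w_N)}{1-x_N-w_N}\prod_{m=1}^{N-1}\Bigl(1-\frac{Nc(w_N)}{m}\Bigr)^{2}.
\end{equation*}

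It then remains to identify this sum with $\Vt^{({\rm cl})}(S,T)-1=\prod_{m\ge1}\frac{m^3-mS+T}{m^3-mS-T}-1$, and I expect this to be the main obstacle. Writing $g_N=\frac{N^3-NS+T}{N^3-NS-T}$, so that $g_N-1=\frac{2w_N}{1-x_N-w_N}$, and using the telescoping $\prod_{m\ge1}g_m-1=\sum_{N\ge1}(g_N-1)\prod_{m<N}g_m$, the required identity becomes
\begin{equation*}
\sum_{N\ge1}(g_N-1)\,\frac{2+w_N}{2}\prod_{m=1}^{N-1}\Bigl(1-\frac{Nc(w_N)}{m}\Bigr)^{2}\=\sum_{N\ge1}(g_N-1)\prod_{m=1}^{N-1}g_m.
\end{equation*}
This is \emph{not} a term‑by‑term equality in $N$ (already the two $N=1$ summands differ by $\tfrac12 w_1(g_1-1)$), so the real content is a cross‑term cancellation among the summands. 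I would prove it as an identity of formal power series in $S$ and $T$ — legitimate because $c(w)$ is a power series and the $N$‑sum converges coefficientwise — by verifying that both sides, increased by $1$, satisfy the same logarithmic‑derivative recursion $\partial_TF=F\cdot\sum_{m\ge1}\bigl(\frac1{m^3-mS+T}+\frac1{m^3-mS-T}\bigr)$ together with $F|_{T=0}=1$, which characterises $\Vt^{({\rm cl})}$ uniquely; differentiating the left‑hand summands uses $c'(w)=(1-2c(w))^{-1}$, a consequence of $c-c^2=w$, and it is precisely this quadratic relation that lets the squared partial products recombine, after summation over $N$, into the infinite product defining $\Vt^{({\rm cl})}$. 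Once this closed form is established, the coefficient of $S^pT^q$ is exactly the right‑hand side of~(\ref{AE}), and the weight $2p+3q$, the bound $\le q$ on the depth, the confinement of the non‑dominant arguments to $\{1,2\}$ with the remaining argument exceeding $2p+2q$, and the integrality of the coefficients all follow by inspection of~(\ref{Z}) and~(\ref{AF}).
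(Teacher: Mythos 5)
Your first half is exactly the paper's computation: the perfect-square generating series $\sum_{r\ge0} Z(k,r)\,t^r=\sum_{n\ge1}n^{-k}\binom{n+t-1}{n-1}^2$ is the paper's equation~(\ref{AJ}), and your resummation of the right-hand side of~(\ref{AE}) reproduces the paper's~(\ref{AK}) verbatim (your $\prod_{m<N}(1-Nc(w_N)/m)^2$ is the squared binomial there, which the paper further rewrites in the square-root-free form~(\ref{AM}) via the identity~(\ref{AL})). The genuine gap is at the step you yourself flag as ``the main obstacle'': identifying this $N$-sum with $\Vt^{({\rm cl})}(S,T)-1$. Your proposed mechanism --- showing both sides satisfy $\partial_TF=F\cdot\sum_{m\ge1}\bigl(\frac1{m^3-mS+T}+\frac1{m^3-mS-T}\bigr)$ with $F|_{T=0}=1$ --- is a valid uniqueness criterion for formal power series, but you never verify that your $N$-sum satisfies this equation; you only assert that the relation $c-c^2=w$ ``lets the squared partial products recombine''. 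That assertion carries no force: differentiating the $N$-th summand produces data local to the index $N$ (a pole along $1-x_N-w_N=0$, and factors indexed by $m<N$ entering through $c'(w_N)$), whereas the right-hand side of the ODE couples each summand to pole factors at \emph{every} $m$, including $m>N$. The cross-term cancellation that you correctly identify as ``the real content'' (your $N=1$ discrepancy $\tfrac12\,w_1(g_1-1)$ already shows the identity is not termwise) is precisely what remains unproven, and verifying the ODE coefficientwise appears to be exactly as hard as the original identity. So the proposal is incomplete at its crux.

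For comparison, the paper closes this gap in Proposition~\ref{Proposition 2} by complex analysis rather than by a formal-power-series recursion: it checks that the meromorphic function defined by the $N$-sum and $V^{({\rm cl})}$ have the same residues at all poles $s=-n$ (both equal $(-1)^{n-1}\,n\,\binom tn\binom un$), so their difference is holomorphic on $\X$; it then estimates both functions as $1+\O\bigl((\log S)/S\bigr)$ for $|S|\to\infty$ with $T$ fixed (with matching pole corrections $r_n(T)/(S-n^2+T/n)$ near the poles), so the difference is a holomorphic function of $S$ that is $o(1)$ at infinity and hence vanishes identically. If you want to salvage your route, you would need either to carry out the ODE verification in full --- for which I see no mechanism short of redoing such a residue analysis --- or to replace that step with a residue-plus-growth (Liouville-type) argument of this kind.
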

Before proceeding any further, we say a few words about the coefficients $C(p,q,r)$, since the definition~(\ref{AF})
is not very enlightening. For~$r=0$ they are given explicitly by 
\begin{equation}\label{AH}
C(p,q,0) \=\frac{2p+3q-3}p\,\binom{p+q-2}{q-1}\=2\,\binom{p+q-1}{q-1}\,+\,\binom{p+q-2}{q-2}\,,
\end{equation}
while for small values of $q-r$ we have the polynomial formulas
\begin{align}\label{AI}
C(p,q,q-1) &\= 2\,,  \\
 C(p,q,q-2) &\= 2p + 2q - 1\,, \notag\\
 C(p,q,q-3) &\= (p+q)^2\,-\,2p\,-\,6\,. \notag
\end{align}
In general, $C(p,q,q-d-1)$ for a fixed value of  $d\ge0$ is a polynomial of degree $d$ in $p$ and~$q$
with top-degree term equal to $\,2(p+q)^d/d!\,$ and in fact a polynomial of degree $d/2$ in
$(p+q+(d-2)/2)^2$ and $p$. (For $d$ odd this statement means that it is $(p+q+(d-2)/2)$ times
a polynomial of degree $(d-1)/2$ in $(p+q+(d-2)/2)^2$ and $p$.) For instance, 
\begin{equation*}
C(p,q,q-6)=\frac1{60}\biggl(p+q+\frac32\biggr)^5 \,-\, \biggl(\frac13p+\frac{25}{24}\biggr)\biggl(p+q+\frac32\biggr)^3
        \,+\,\biggl(p^2 + \frac{85}{12}p + \frac{2003}{320}\biggr)\biggl(p+q+\frac32\biggr)\,.
\end{equation*}
The coefficients $C(p,q,r)$ satisfy the two Pascal's-triangle-like recursion relations
\begin{align*}
 C(p,q,r)&\= C(p-1,q,r)\,+\,C(p,q-1,r)\,, \notag \\
   C(p,q,r)&\= C(p,q,r+1)\,+\,C(p,q-1,r-1)\,,
\end{align*}
the second of which, together with the initial values~(\ref{AH}) and~(\ref{AI}), determine them completely.
The generating function~(\ref{AF}) also gives the closed formula
\begin{equation*}
C(p,q,r) \= \sum_{n=r}^{q-1}\biggl(\binom{2n-r}{n}-2\binom{2n-r-1}{n}\biggr)\,C(p,q-n,0)
\end{equation*}
with $C(p,q',0)$ given by~(\ref{AH}), making the polynomial nature of $C(p,q,r)$ as a function of~$p$ evident. 

As already mentioned in the introduction, the same expression
as appears in~(\ref{AE}) will turn up in connection with the leading
terms of the modular graph functions studied in Section~3. However, there is a surprise: in the application there we will need
not only the values of $e_{p,q}$ for $p\ge0$, $q\ge1$ appearing in
the Taylor expansion~(\ref{AD}), but also the values in the range $-q<p<0$.
(Notice that the definition~(\ref{AE}) makes sense in this range, since
the first argument $2p+3q-r$ of $Z(\,\cdot\,,r)$ is still~$\ge2$.)
For the application it is important that also these values, like
the ones occurring in Theorem~\ref{Theorem 1}, belong to the odd zeta-value
ring~$\mathcal{R}$, but in fact numerical calculations revealed the
following much stronger statement, whose proof will be given in Section~\ref{SSec:epq=0}:

\begin{thm}\label{Theorem 3}
The numbers $e_{p,q}$ defined by~(\ref{AE}) vanish for $\,-q<p<0\,$.
\end{thm}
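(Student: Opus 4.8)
The plan is to deduce Theorem~\ref{Theorem 3} from a single generating‑function identity that computes the \emph{full} double series $\sum_{q\ge1}\sum_{p\ge 1-q}e_{p,q}\,S^p\,T^q$, where $e_{p,q}$ is defined by~(\ref{AE}) for \emph{every} $p\ge 1-q$ (exactly the range in which all first arguments $2p+3q-r$ stay $\ge 2$). Concretely, I would prove
\[
\sum_{q\ge1}\ \sum_{p\ge 1-q}e_{p,q}\,S^p\,T^q\=\Vt^{({\rm cl})}(S,T)-1\,.
\]
Granting this, the theorem is immediate: by the Corollary to Proposition~\ref{Proposition 1} the right‑hand side is an honest power series containing only non‑negative powers of $S$, so for $1-q\le p<0$ the coefficient of $S^pT^q$ must vanish; but that coefficient is exactly $e_{p,q}$. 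Thus the whole difficulty is pushed into proving the displayed identity over the extended range $p\ge1-q$, the part with $p\ge0$ being already the content of Theorem~\ref{Theorem 1}.

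To establish the identity I would first rewrite the special sums~(\ref{Z}) as Dirichlet series. Expanding each composition in~(\ref{Z}), the position $n$ contributes $1+2t/n+t^2/n^2=(1+t/n)^2$, so that
\[
\sum_{r\ge0}Z(k,r)\,t^r\=\sum_{m\ge1}\frac1{m^k}\prod_{n=1}^{m-1}\Bigl(1+\frac tn\Bigr)^{2}\=\sum_{m\ge1}\frac1{m^k}\binom{m-1+t}{m-1}^{2}\,,
\]
i.e.\ $Z(k,r)=\sum_{m\ge1}a_{m,r}\,m^{-k}$ with $a_{m,r}=[t^r]\binom{m-1+t}{m-1}^2$. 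Substituting this into the double series, interchanging the order of summation and setting $x=S/m^2$, $y=T/m^3$, the sums over $p$ and $q$ decouple from the arithmetic: the sum over $p\ge1-q$ is a one‑sided geometric series coming from the factor $(1-z)^{-(p+1)}$ in~(\ref{AF}), and the remaining sum over $q$ is then evaluated by Lagrange inversion with kernel $\phi(z)=1-z$. Writing $\psi(z):=\tfrac12\bigl(1-\sqrt{1-4z}\bigr)$ for the series in~(\ref{AF}), characterised by $\psi(1-\psi)=z$, these two steps collapse the $p,q,r$‑summation to a single sum over $m$:
\[
\sum_{q\ge1}\sum_{p\ge1-q}e_{p,q}\,S^pT^q\=\sum_{m\ge1}\frac{T\,(2Sm+3T)}{(Sm+T)\,(m^3-Sm-T)}\ \binom{m-1+t}{m-1}^2\Big|_{\,t=-m\,\psi(T/(Sm+T))}\,.
\]

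The final and hardest step is to identify this single series in $m$ with $\prod_{n\ge1}\frac{n^3-nS+T}{n^3-nS-T}-1=\Vt^{({\rm cl})}(S,T)-1$ coming from the proof of Proposition~\ref{Proposition 1}. This is where I expect the main obstacle to lie: the binomial factor is evaluated at the algebraic point $t=-m\,\psi(T/(Sm+T))$, and one must show that, after squaring, it combines with the rational prefactor so as to reproduce exactly the Taylor coefficients of the product. I would prove this either by checking that both sides have the same logarithmic derivative in $T$ together with the same value at $T=0$ (a low‑order check already gives $[T^1]=2\sum_n 1/(n(n^2-S))=\sum_{p\ge0}2\z(2p+3)\,S^p$ on both sides, in agreement with $e_{p,1}=2\z(2p+3)$), or by a residue argument organised by the two roots $m\psi$ and $m(1-\psi)$ of $\beta(m-\beta)=m^2T/(Sm+T)$. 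A secondary but genuine technical point is the justification of the interchange of the conditionally convergent sums and, above all, the treatment of the cutoff $p\ge1-q$: it is precisely the boundary terms excluded by this cutoff that carry the content of Theorem~\ref{Theorem 3}, so the bookkeeping at $p=1-q,\dots,-1$ cannot be glossed over.
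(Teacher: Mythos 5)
Your reduction of Theorem~\ref{Theorem 3} to the extended generating-function identity is logically sound, and your intermediate formula is in fact correct: summing the one-sided geometric series over $p\ge 1-q$ coming from the factor $(1-y)^{-(p+1)}$ in~(\ref{AF}) and applying Lagrange inversion with kernel $(1-y)/x$ (whose fixed-point equation gives $z=T/(Sm+T)$) does collapse the triple sum to the single $m$-series you display, with exactly the prefactor and evaluation point you state. But the proposal then stops precisely where the theorem lives: all of the content of Theorem~\ref{Theorem 3} is now concentrated in the unproved identification of that $m$-series with $\Vt^{({\rm cl})}(S,T)-1$, and neither of your two suggested routes survives scrutiny. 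The logarithmic-derivative idea is circular, since your left-hand side is a sum rather than a product, and you cannot differentiate its logarithm without already possessing the closed form you are trying to prove. As for the residue argument: using~(\ref{AL}), your $m$-th term becomes
\begin{equation*}
\frac{T\,(2Sm+3T)}{(Sm+T)\,(m^3-Sm-T)}\,\prod_{j=1}^{m-1}\biggl(1\,-\,\frac{m^2\,T}{j(m-j)(Sm+T)}\biggr)\,,
\end{equation*}
which for fixed $T\ne0$ has a pole of order $m$ at $S=-T/m$ (plus further poles coming from the product factors); none of these loci are poles of $\Vt^{({\rm cl})}$, and none can cancel against other terms, since each occurs in exactly one term. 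Moreover the product no longer tends to~$1$ as $m\to\infty$ but behaves like $m^{-2T/S}$. Consequently the pole-matching-plus-growth argument that proves Proposition~\ref{Proposition 2} (the analogous identification step in the proof of Theorem~\ref{Theorem 1}, which your computation otherwise parallels) does not transfer to the extended range: your identity only holds in the formal/analytic regime where $T/S$ is small, and a genuinely new argument would be needed there. (The interchange-of-summation issue you flag is minor by comparison, since the coefficient of each $S^pT^q$ is a finite sum of absolutely convergent Dirichlet series.)

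For comparison, the paper proves the vanishing by a mechanism of which your proposal has no counterpart: the full $s,t,u$-symmetry of $V^{({\rm cl})}$. The symmetric numbers $f_{\mu,\nu}$ of~(\ref{A}) (corollary of Proposition~\ref{Prop2}) are expressed through Proposition~\ref{PropPoint1} as combinations $\sum_j\lambda_{j,\nu}(w)\,e_{3j-w,w-2j}$; the symmetry $f_{\mu,\nu}=f_{\nu,\mu}$ together with Lemma~\ref{PropPoint2} then yields the linear relations $\sum_{0<j<w/3}\lambda_{j,\nu}^-(w)\,e_{3j-w,w-2j}=0$, and a rank computation (identifying the span of the $\Lambda_j(w,Y)$ with a space of symmetric polynomials in $s,t,u$) shows the coefficient matrix has full rank $\big\lfloor\frac{w-1}3\big\rfloor$, forcing all $e_{p,q}$ with $-q<p<0$ to vanish. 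If you want to salvage your approach, the missing ingredient is precisely some injection of this symmetry into your $m$-series; as written, the final step is not a technicality but the entire theorem.
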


\subsection{Proof of Theorem~\ref{Theorem 1}}\label{SSec:ProofVira}

We now turn to the proof of Theorem~\ref{Theorem 1}.  We were unable to find a ``synthetic'' proof which leads to 
the formula~(\ref{AE}) in a natural way beginning from formula~(\ref{AA}) or~(\ref{AC}).  Instead, we shall start from 
the empirically discovered formula~(\ref{AE}) for $e_{p,q}$, define a function $E(s,t,u)=\widetilde{E}(S,T)$ as the 
right-hand side of~(\ref{AD}) with these coefficients, and by studying the properties of this new function 
show that it is equal to the original function~$V^{({\rm cl})}(s,t,u)$. The starting point is the generating series
\begin{equation}\label{AJ}
\sum_{r=0}^\infty Z(k,r)\,t^r\=\sum_{n=1}^\infty\frac1{n^k}\,\prod_{0<m<n}\biggl(1\,+\,\frac{2t}m\,+\,\frac{t^2}{m^2}\biggr)
 \=\sum_{n=1}^\infty\frac1{n^k}\,\binom{n+t-1}{n-1}^2 
\end{equation}
for the numbers defined by~(\ref{Z}).  Together with the generating series
\begin{equation*}
\sum_{p\ge0,\;q\ge r+1} C(p,q,r)\,x^p\,y^{q-1}\= \frac{2+y}{1-x-y}\,\biggl(\frac{1-\sqrt{1-4y}}2\biggr)^r\qquad(r\ge0)\,,
\end{equation*}
which is an immediate consequence of the definition~(\ref{AF}), this gives
\begin{align}\label{AK}
\widetilde{E}(S,T) &\= 1\,+\,\sum_{p\ge0,\;q>r\ge0}(-1)^r\,C(p,q,r)\,Z(2p+3q-r,r)\,S^p\,T^q \notag\\
  &\= 1\,+\,\sum_{n=1}^\infty\sum_{p\ge0}\sum_{q>r\ge0}(-1)^r\,C(p,q,r)\,
      \frac{S^p\,T^q}{n^{2p+3q-r}}\,\text{Coeff}_{t^r}\biggl[\binom{n+t-1}{n-1}^2\biggr] \notag\\
&\= 1\,+\,T\,\sum_{n=1}^\infty\sum_{r=0}^\infty(-n)^r\,\frac{2+T/n^3}{n^3-nS-T}\,\biggl(\frac{1-\sqrt{1-4T/n^3}}2\biggr)^r\,
      \text{Coeff}_{t^r}\biggl[\binom{n+t-1}{n-1}^2\biggr]\notag\\
&\= 1\,+\,T\,\sum_{n=1}^\infty\frac{2+T/n^3}{n^3-nS-T}\, \binom{\h\bigl(n+\sqrt{n^2-4T/n}\,\bigr)-1}{n-1}^2\;. 
\end{align}
Note that this can be written more uniformly, although even less beautifully, as
\begin{equation*}
\widetilde{E}(S,T)\=\sum_{n=0}^\infty \frac{2n^3+T}{2T}\;\frac{n^3-2T+\sqrt{n^3(n^3-4T)}}{n^3-nS-T}\;
\binom{\h\bigl(n+\sqrt{n^2-4T/n}\,\bigr)}n^2\;.
\end{equation*}
Alternatively, and more attractively, we can use the simple identity
\begin{equation}\label{AL}
\binom{a-1}{n-1}^2\=\prod_{0<j<n}\biggl(1\,-\,\frac aj\biggr)\biggl(1\,-\,\frac a{n-j}\biggr)
   \=\prod_{0<j<n}\biggl(1\,-\,\frac{a(n-a)}{j(n-j)}\biggr)
\end{equation}
to rewrite~(\ref{AK}) as
\begin{equation}\label{AM}
\widetilde{E}(S,T)\= 1\,+\,T\,\sum_{n=1}^\infty\frac{2+T/n^3}{n^3-nS-T}\,\prod_{j=1}^{n-1}\biggl(1\,-\,\frac T{nj(n-j)}\biggr)\;. 
\end{equation}
Going back to the original variables~$s$,~$t$ and~$u$, we find that we have reduced the proof of the theorem 
to the proof of the following proposition, which is of some interest in itself.
\begin{prop}\label{Proposition 2} 
For $(s,t,u)\in\X$ we have the convergent series representation
\begin{equation}\label{AN}
V^{({\rm cl})}(s,t,u)\= 1\,-\,stu\,\sum_{n=1}^\infty\frac{2-stu/n^3}{(n+s)(n+t)(n+u)}
 \,\prod_{j=1}^{n-1}\biggl(1\,+\,\frac{stu}{nj(n-j)}\biggr).
\end{equation}
\end{prop}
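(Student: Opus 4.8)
The plan is to bring in the one piece of information about $V^{({\rm cl})}$ that is genuinely independent of its Taylor coefficients, namely the product formula
\[
\widetilde V^{({\rm cl})}(S,T)\=\prod_{n=1}^\infty\frac{n^3-nS+T}{n^3-nS-T}
\]
obtained in the proof of Proposition~\ref{Proposition 1}, and to compare it with the right-hand side of~(\ref{AN}), which in the variables~(\ref{AB}) is exactly the function $\widetilde E(S,T)$ of~(\ref{AM}); thus the proposition is equivalent to the identity $\widetilde E(S,T)=\widetilde V^{({\rm cl})}(S,T)$. I would fix a generic value of $S$ and regard both sides as meromorphic functions of the single variable~$T$. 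Writing $T_n:=n^3-nS$, so that $(n+s)(n+t)(n+u)=T_n-T$, both functions are holomorphic near $T=0$ with value~$1$ there, and both have at most simple poles, located exactly at the points $T=T_n$ ($n\ge1$): for the product this is clear, and for the series~(\ref{AM}) it follows from the estimate $\prod_{j=1}^{n-1}(1-T/(nj(n-j)))\to1$ as $n\to\infty$, which also yields local uniform convergence. The strategy is then to match the principal parts at every pole and to deduce that the difference is entire and vanishes identically.

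The heart of the argument is the residue computation, and here there is a pleasant simplification. From the product formula, $\widetilde V^{({\rm cl})}$ has residue $-2T_n\prod_{m\ne n}\frac{T_m+T_n}{T_m-T_n}$ at $T=T_n$. Using the factorizations
\[
T_m+T_n\=(m+n)\,(m^2-mn+n^2-S)\,,\qquad T_m-T_n\=(m-n)\,(m^2+mn+n^2-S)\,,
\]
together with the identity $m^2+mn+n^2-S=(m+n)^2-(m+n)n+n^2-S$ --- i.e.\ the ``$+$'' quadratic evaluated at $m$ equals the ``$-$'' quadratic evaluated at $m+n$ --- the quadratic factors telescope under the shift $m\mapsto m+n$, while the factors $(m+n)/(m-n)$ assemble into a ratio of factorials, so the infinite product collapses to an explicit finite expression. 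On the other side, the $n$-th term of~(\ref{AM}) has residue $-T_n\,(2+T_n/n^3)\prod_{j=1}^{n-1}(1-T_n/(nj(n-j)))$, where $2+T_n/n^3=(3n^2-S)/n^2$ and $1-T_n/(nj(n-j))=-(j^2-jn+n^2-S)/(j(n-j))$, which is likewise an explicit finite product. I expect these two expressions to coincide (the shifted-quadratic telescoping is precisely what converts the product over all $m\ne n$ into the product over $1\le j<n$ occurring in~(\ref{AM})), so that, with the matching values at $T=0$, the function $\widetilde E-\widetilde V^{({\rm cl})}$ is entire in~$T$.

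The main obstacle is the final step of showing that this entire difference is identically zero. The natural route is a growth argument. The product $\widetilde V^{({\rm cl})}$ is a meromorphic function of order~$1/3$ in~$T$ (its zeros and poles lie at $\mp T_n\sim\mp n^3$), and it has modulus~$1$ on the imaginary axis because $|T_n+iy|=|T_n-iy|$ for real~$T_n$. If one can show that the series~(\ref{AM}) also has order $\le1/3$, then the difference has order $<1$, and a Phragm\'en--Lindel\"of argument in the two half-planes bounded by the imaginary axis --- using boundedness on that axis together with the value~$0$ at $T=0$ --- would force it to vanish. The delicate point is exactly the a priori bound on~(\ref{AM}): termwise estimates are far too lossy, since $\prod_{j=1}^{n-1}(1-T/(nj(n-j)))$ is individually large for $n\lesssim|T|^{1/2}$, and the boundedness on the imaginary axis can only come from cancellation among these terms (the same cancellation responsible for $|\widetilde V^{({\rm cl})}|=1$ there). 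Controlling this cancellation carefully is the crux.

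Two alternative completions seem worth trying. One is to establish the reflection symmetry $\widetilde E(S,T)\,\widetilde E(S,-T)=1$ directly from the series; this would place the zeros of~$\widetilde E$ exactly at $T=-T_n$, so that $\widetilde V^{({\rm cl})}/\widetilde E$ is a zero- and pole-free entire function of order $<1$, hence the constant~$1$. The other is to prove a direct identity between~(\ref{AM}) and the manifestly convergent telescoped form $1+2T\sum_{n\ge1}\frac1{T_n-T}\prod_{m=1}^{n-1}\frac{T_m+T}{T_m-T}$ of the product, for instance by Abel summation exploiting a contiguous relation between the products $\prod_{j=1}^{n-1}(1-T/(nj(n-j)))$ for consecutive~$n$; this would bypass the analytic estimates altogether, at the cost of uncovering the right telescoping.
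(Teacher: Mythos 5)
Your overall architecture---match principal parts at the common poles, then kill the entire difference by a growth/Liouville argument---is the same as the paper's, and your residue bookkeeping is on the right track: indeed the quadratic $m^2+mn+n^2-S$ in $T_m-T_n$ equals $(m+n)^2-(m+n)n+n^2-S$, one checks $1-T_n/(nj(n-j))=-(j^2-jn+n^2-S)/(j(n-j))$ and $n^2\bigl(2+T_n/n^3\bigr)=3n^2-S$, so the telescoping you anticipate does produce the finite product of~(\ref{AM}) (modulo the care needed to regularize the conditionally convergent product of the factors $(m+n)/(m-n)$). The paper sidesteps even this by computing residues in the original variables: $\mathrm{Res}_{s=-n}$ of both sides equals the finite expression $(-1)^{n-1}\,n\,\binom{t}{n}\binom{u}{n}$, via~(\ref{AL}) and the residues of $\Gamma$. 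The genuine gap is the step you yourself flag as the crux, and it is created by your choice of which variable to send to infinity. Fixing $S$ and letting $|T|\to\infty$, the $n$-th term of~(\ref{AM}) involves $T(2+T/n^3)\prod_{j<n}\bigl(1-T/(nj(n-j))\bigr)$, a polynomial in $T$ of degree $n+1$: the terms have unbounded polynomial growth, so no termwise estimate can give order $\le 1/3$, or even finite order, for the sum, and the required boundedness on the imaginary axis is equivalent to precisely the cancellation you are trying to prove. Your two fallback suggestions (the reflection identity $\widetilde E(S,T)\,\widetilde E(S,-T)=1$ proved directly from the series, or an Abel-summation telescoping) are not carried out, and each looks comparably hard to the original statement.

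The missing idea is simply to run the Liouville argument in the other variable: fix $T$ and let $|S|\to\infty$. Writing the series as $\widetilde E(S,T)=1+\sum_{n\ge1}r_n(T)/(S-n^2+T/n)$ with $r_n(T)=-\frac{T(2+T/n^3)}{n}\prod_{j<n}\bigl(1-T/(nj(n-j))\bigr)$, one has $r_n(T)=O(1/n)$ uniformly for fixed $T$, so a crude termwise estimate gives $\widetilde E(S,T)=1+O\bigl((\log S)/S\bigr)$ outside small disks around the poles $S=n^2-T/n$, plus an explicit single pole term inside each disk. On the $V^{({\rm cl})}$ side, $|S|\to\infty$ with $T\neq0$ fixed forces one of $s,t,u$ to be $O(1/S)$ and the other two to be of size $\sqrt{|S|}$, and estimates on the Gamma quotients give the same asymptotics. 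The difference is then an entire function of $S$ which is $o(1)$ at infinity, hence identically zero. This is exactly how the paper closes the argument, and it requires no control of cancellation at all; with this substitution (and with your residue step completed, or replaced by the paper's finite computation in $s,t,u$) your proof goes through.
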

\begin{proof} We continue to denote the function on the right by $E(s,t,u)$.  We first observe that the series in~(\ref{AN})
converges like $\sum n^{-3}$ (the product over~$j$ is bounded, and in fact is $\,1+\O\bigl(n^{-2}\log n\bigr)$, as $n\to\infty$),
so defines a meromorphic function in~$\X$ with simple poles along the lines $s=-n$, $t=-n$, $u=-n$ ($n\in\N$) and no other poles. We
compare the residues of~$V^{({\rm cl})}$ and~$E$ at $s=-n\in\mathbb{Z}_{<0}$ (and with~$t,\,u$ generic---i.e.~both not belonging to $\mathbb{Z}_{<0}$---with sum~$n$). For~$E$ we have
\begin{align*}
\text{Res}_{s=-n} \big[E(s,t,u)\,ds\big] &\=\,ntu\,\frac{2+tu/n^2}{(n+t)(n+u)}\,\prod_{j-1}^{n-1}\biggl(1\,-\,\frac{tu}{j(n-j)}\biggr) \notag\\
  &\= \frac{tu}n\,\binom{t-1}{n-1}^2\= (-1)^{n-1}\,n\,\binom{t}n\binom{u}n\,,
\end{align*}
where we have used the identities $(n+t)(n+u)=2n^2+tu$ and~(\ref{AL}).  For~$V^{({\rm cl})}$ the computation is even simpler: since the residue
of $\G(1+s)$ at its simple pole at $s=-n$ is $\frac{(-1)^{n-1}}{(n-1)!}$ we have
\begin{equation*}
\text{Res}_{s=-n} \big[V^{({\rm cl})}(s,t,u)\,ds\big]\= \frac{(-1)^{n-1}}{(n-1)!}\,\frac{\G(1+t)\,\G(1+u)}{\G(1+n)\,\G(1+t-n)\,\G(1+u-n)}\
       \= (-1)^{n-1}\,n\,\binom{t}n\binom{u}n\,.
\end{equation*}
By symmetry the residues of the functions~$V^{({\rm cl})}$ and~$E$ at the other poles $t=-n$ and $u=-n$ also agree, so the difference 
of~$V^{({\rm cl})}$ and~$E$ is holomorphic in~$\X$, and the difference of~$\Vt^{({\rm cl})}$ and~$\widetilde{E}$ is holomorphic in~$\C^2$. To complete 
the proof, we consider the growth of $\Vt^{({\rm cl})}$ and $\widetilde{E}$ for $|S|\to\infty$ with~$T$ fixed.  Rewriting~(\ref{AM}) as
\begin{equation*}
\widetilde{E}(S,T)\=1+\sum_{n\ge1}\frac{r_n(T)}{S-n^2+T/n}\,,\qquad 
   r_n(T)=-\frac{T(2+T/n^3)}n\prod_{j=1}^{n-1}\bigl(1-\frac T{nj(n-j)}\bigr)
\end{equation*}
and noting that $r_n(T)=\O(1/n)$ as $n\to\infty$ with~$T$ fixed, we find
\begin{equation*}
\widetilde{E}(S,T)\= \begin{cases} 1\,+\,\O\biggl(\dfrac{\log S}S\biggr) &\quad\text{for }S\in\C\smallsetminus\bigcup_{n\ge1}D_n,\quad|S|\to\infty, \\
1\,+\,\dfrac{r_n(T)}{S-n^2+T/n}\,+\,\O\biggl(\dfrac{\log S}S\biggr) &\quad\text{for }S\in D_n,\quad n\to\infty, \end{cases}
\end{equation*}
where~$D_n$ denotes the disk with center~$n^2$ and radius~$n$.  For $V^{({\rm cl})}$, we note that the only way that~$|S|$ can go to 
infinity with $T\ne0$ fixed is for exactly one of the three variables~$s$,~$t$ and~$u$, say~$s$, to go to 0 like $\,\O(1/S)$
and the other two to go to infinity like $\pm\sqrt S$.  Then $\G(1+s)/\G(1-s)$ equals $1+\O(1/S)$ and each of
$\G(1+t)/\G(1-u)$ and $\G(1+u)/\G(1-t)$ equals $1+\O((\log S)/S)$,  except for an additional pole term $r_n(T)/(S-n^2+T/n)$ in the
vicinity of the pole $S=n^2-T/n$ (corresponding to $t=-n$ or $u=-n$), so we get exactly the same estimate for $\Vt^{({\rm cl})}(S,T)$ as the
one just given for~$\widetilde{E}(S,T)$.  It follows that the difference between~$\Vt^{({\rm cl})}$ and~$\widetilde{E}$ for~$T$ fixed is a holomorphic
function of~$S$ which is~$\,\text o(1)\,$ as $|S|\to\infty$, and hence vanishes identically. \\
\end{proof}

\subsection{Complex beta function and a related symmetry property}\label{SSec:Beta}

As explained at the beginning of the section, the Virasoro function~$V^{({\rm cl})}$ is essentially given by the complex beta  function~$\beta_\mathbb{C}$ defined by the integral~(\ref{ComplexBeta}). Here we give an alternative expression for~$\beta_\mathbb{C}$ as a sum of two hypergeometric functions~${}_3F_2$, which will be used in the proofs of Theorem~\ref{Theorem 3} and later also of Proposition~\ref{ThmMainClosed}.
\begin{prop}\label{Prop2}
For $(s,t,u)\in\X$ such that $\emph{Re}(s)\geq 0$, $\emph{Re}(t)\geq 0$ and $\emph{Re}(u)\geq -1$ we have
\begin{equation}\label{sumrepbeta}
\beta_{\mathbb{C}}(s,t)\=\sum_{n\geq 0}\binom{u}{n}^2\bigg(\frac{1}{n+s}+\frac{1}{n+t}\bigg).
\end{equation}
\end{prop}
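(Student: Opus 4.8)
The plan is to evaluate the complex beta integral (\ref{ComplexBeta}) directly, term by term, arranging the expansion so that the squared binomials $\binom un^2$ emerge. Using $dz\,d\bar z=-2i\,dx\,dy$, equation (\ref{ComplexBeta}) becomes the absolutely convergent real integral $\beta_\C(s,t)=\frac1\pi\int_\C|z|^{2s-2}|1-z|^{2t-2}\,dx\,dy$, valid for $\text{Re}(s),\text{Re}(t)>0$ and $\text{Re}(s+t)<1$. The three singular points $0,1,\infty$ carry exponents $s$, $t$ and $u+1$ respectively --- the last because after the inversion $z\mapsto1/w$ the measure contributes a conformal weight and the integrand becomes $|w|^{2(u+1)-2}|1-w|^{2t-2}$ near $w=0$ --- and these exponents sum to $1$. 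The structural input is that $\beta_\C$ is symmetric in the triple $(s,t,u+1)$: this follows from the invariance of the integral under the Möbius maps of $\P$ permuting $\{0,1,\infty\}$, and it can be read off directly from the closed form $\beta_\C(s,t)=\G(s)\G(t)\G(1+u)/\big(\G(1-s)\G(1-t)\G(-u)\big)$ produced by Shapiro's computation, which gives in particular $\beta_\C(s,t)=\beta_\C(s,u+1)$.

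First I would establish a master expansion by integrating the two singular factors radially and expanding the third. Splitting $\C$ along $|z|=1$ and expanding $(1-z)^{t-1}$ for $|z|<1$ and $(1-1/z)^{t-1}$ for $|z|>1$ by the binomial theorem, the angular integrations annihilate all off-diagonal terms, while the radial integrations reduce to $\int_0^1 r^{2s+2m-1}\,dr=\frac1{2(m+s)}$ and $\int_1^\infty r^{-2u-3-2m}\,dr=\frac1{2(m+u+1)}$. Collecting the surviving diagonal terms yields
\begin{equation*}
\beta_\C(s,t)\=\sum_{m\ge0}\binom{t-1}m^2\Big(\frac1{m+s}+\frac1{m+u+1}\Big),
\end{equation*}
in which the binomials record the expanded factor (exponent $t$) and the two denominators record the exponents $s$ and $u+1$ of the punctures around which one integrates.

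It then remains only to relabel. Applying this master formula to $\beta_\C(s,u+1)$, whose three exponents are $s$, $u+1$ and $t$, promotes $u+1$ to the role of the expanded factor and produces $\binom{(u+1)-1}m^2=\binom um^2$ together with the denominators $\frac1{m+s}$ and $\frac1{m+t}$; by the symmetry $\beta_\C(s,t)=\beta_\C(s,u+1)$ this is exactly (\ref{sumrepbeta}). The series converges for $\text{Re}(u)>-1$, which together with $\text{Re}(s),\text{Re}(t)\ge0$ is the stated domain, the boundary being recovered by continuity.

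The main obstacle is analytic rather than conceptual: one must justify the interchange of summation and integration near the singular punctures, where the integrand is only borderline integrable (near $z=0$ its radial part is $r^{2\text{Re}(s)-1}\,dr$), and must keep careful track of the domains on which the master expansion, the symmetry, and the final series simultaneously hold before extending to the closed region $\text{Re}(s),\text{Re}(t)\ge0$, $\text{Re}(u)\ge-1$ by analytic continuation. A second, independent route avoids the integral: both sides of (\ref{sumrepbeta}) are meromorphic with simple poles only along $s,t\in\Z_{\le0}$, and using the residue $\text{Res}_{s=-n}V^{({\rm cl})}=(-1)^{n-1}n\binom tn\binom un$ from the proof of Proposition~\ref{Proposition 2}, the relation $\beta_\C=-\frac u{st}V^{({\rm cl})}$, and the identity $\binom{n-t}n=(-1)^n\binom{t-1}n$, one checks that both sides have residue $\binom{t-1}n^2$ at every $s=-n$ (and, by $s\leftrightarrow t$ symmetry, at every $t=-n$); the difference is then holomorphic, and one would finish by a growth estimate --- establishing the decay of the series as $|s|\to\infty$ within the strip being the delicate step of that alternative.
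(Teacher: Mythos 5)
Your proof is correct and takes essentially the same route as the paper's: split the integral over $\mathbb{C}$ along $|z|=1$, use the inversion $z\mapsto 1/z$, expand one factor by the binomial theorem so that the angular integration kills all off-diagonal terms, and invoke the symmetry $\beta_{\mathbb{C}}(s,t)=\beta_{\mathbb{C}}(s,u+1)$ coming from Shapiro's closed form. The only difference is the order of the two steps --- the paper applies the symmetry first, rewriting the integrand as $|z|^{2s-2}|1-z|^{2u}$ and then expanding that factor, while you expand $|1-z|^{2t-2}$ first and relabel by the symmetry afterwards --- which is immaterial.
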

\begin{proof} 
By equation~(\ref{betaShapiro}) and the properties of~$V^{({\rm cl})}$ we can write the integral defining $\beta_{\mathbb{C}}$ as
\begin{equation*}
\beta_{\mathbb{C}}(s,t)\=-\frac{1}{2\pi i}\,\int_{\mathbb{C}}\,|z|^{2s-2}\,|1-z|^{2u}\,dz\,d\overline{z}\=-\frac{1}{2\pi i}\,\bigg(\int_{|z|\leq 1}\,+\,\int_{|z|\geq 1}\bigg)\,|z|^{2s-2}\,|1-z|^{2u}\,dz\,d\overline{z}.
\end{equation*}
We define
\begin{equation*}
\Psi(x,y)\;:=\;-\frac{1}{2\pi i}\,\int_{|z|\leq 1}\,|z|^{2y-2}\,|1-z|^{2x}\,dz\,d\overline{z},
\end{equation*}
and we use the change of variable $z\rightarrow 1/z$ to write
\begin{equation*}
\beta_{\mathbb{C}}(s,t)\=\Psi(u,s)\,+\,\Psi(u,t).
\end{equation*}
Using polar coordinates and the binomial theorem we obtain
\begin{align*}
\Psi(x,y)&\=\frac{1}{\pi}\,\int_0^1\,r^{2y-1}\,dr\,\int_0^{2\pi}\,|1-re^{i\theta}|^{2x}\,d\theta \notag \\
&\=\frac{1}{\pi}\,\sum_{m,n\geq 0}\,(-1)^{m+n}\,\binom{x}{n}\binom{x}{m}\,\int_0^1\,r^{2y+m+n-1}\,dr\,\int_0^{2\pi}\,e^{i(m-n)\theta}\,d\theta \notag \\
&\=\sum_{n\geq 0}\,\binom{x}{n}^2\frac{1}{n+y}.
\end{align*}
\end{proof}
\begin{cor}
The real numbers $f_{\mu,\nu}$ $(\mu,\,\nu\ge0)$ defined by
\begin{equation}\label{A}
f_{\mu,\nu} \= \sum_{p+r=\mu}(-1)^r\,\bigg(\binom{\nu+p}{\nu}\;+\;(-1)^\nu\delta_{p,0}\bigg)\,Z(\nu+p+3,r)
\end{equation}
are symmetric in $\mu$ and~$\nu$ and belong to the ring~$\mathcal{R}=\mathbb{Q}[\zeta(3),\zeta(5),\zeta(7),\ldots ]$.
\end{cor}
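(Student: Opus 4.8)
The plan is to compute the two–variable generating function
\[
F(x,y)\;:=\;\sum_{\mu,\nu\ge0}f_{\mu,\nu}\,x^\mu\,y^\nu
\]
in closed form and to recognize it as a normalized Virasoro function, after which both assertions become transparent. First I would split the defining sum~(\ref{A}) into its ``main'' part, carrying the binomial $\binom{\nu+p}{\nu}$, and the ``correction'' part coming from the Kronecker delta, which contributes only the terms $(-1)^{\mu+\nu}Z(\nu+3,\mu)$. Inserting the generating series~(\ref{AJ}) for $Z(k,r)$ and summing the resulting series in the remaining indices---using $\sum_{p,\nu}\binom{p+\nu}{\nu}a^p b^\nu=(1-a-b)^{-1}$ for the main part and a single geometric series for the correction part, together with $\sum_r(-x)^r\,\text{Coeff}_{t^r}\binom{n+t-1}{n-1}^2=\binom{n-x-1}{n-1}^2$ in both cases---I expect to obtain
\begin{equation*}
F(x,y)\=\sum_{n\ge1}\frac1{n^2}\,\binom{n-x-1}{n-1}^2\,\biggl(\frac1{n-x-y}\,+\,\frac1{n+y}\biggr)\,.
\end{equation*}

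The point is that the correction term supplies exactly the second summand $1/(n+y)$, assembling the symmetric two–term bracket of Proposition~\ref{Prop2}. Next I would use the squared reflection identity behind~(\ref{AL}), namely $\binom{n-x-1}{n-1}^2=(n^2/x^2)\binom{x}{n}^2$, to rewrite the $n\ge1$ terms as $\tfrac1{x^2}\sum_{n\ge1}\binom{x}{n}^2\bigl(\tfrac1{n+s}+\tfrac1{n+t}\bigr)$ with $(s,t,u)=(-x-y,\,y,\,x)\in\X$. By Proposition~\ref{Prop2} this sum equals $\beta_{\mathbb{C}}(s,t)$ minus its $n=0$ term $\tfrac1s+\tfrac1t=\tfrac{x}{y(x+y)}$; since $-u/(st)=\tfrac{x}{y(x+y)}$ coincides with that same $n=0$ term, equation~(\ref{betaShapiro}) gives $\beta_{\mathbb{C}}(s,t)=\tfrac{x}{y(x+y)}\,V^{({\rm cl})}(s,t,u)$, and hence
\begin{equation*}
F(x,y)\=\frac{V^{({\rm cl})}(-x-y,\,y,\,x)-1}{x\,y\,(x+y)}\,.
\end{equation*}

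Finally I would pass to the symmetric variables~(\ref{AB}): for $(s,t,u)=(-x-y,y,x)$ one computes $S=x^2+xy+y^2$ and $T=xy(x+y)$, both symmetric polynomials in $x,y$ with integer coefficients, and the denominator $xy(x+y)$ is precisely $T$. By the Taylor expansion~(\ref{AD}) this yields $F(x,y)=\bigl(\Vt^{({\rm cl})}(S,T)-1\bigr)/T=\sum_{p\ge0,\,q\ge1}e_{p,q}\,S^p\,T^{q-1}$. The symmetry $f_{\mu,\nu}=f_{\nu,\mu}$ is then immediate, because $F(x,y)$ is a power series in the quantities $S$ and $T$, which are symmetric in $x$ and $y$; and the membership $f_{\mu,\nu}\in\mathcal R$ follows because each $f_{\mu,\nu}$ is a $\mathbb Z$–linear combination of the coefficients $e_{p,q}$ with $p\ge0,\,q\ge1$, all of which lie in $\mathcal R$ by the corollary to Proposition~\ref{Proposition 1}.

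The step I expect to be the main obstacle is the bookkeeping in the first paragraph: one must justify interchanging the (absolutely convergent, for $x,y$ small) summations, check that the two pieces of~(\ref{A}) reassemble into exactly the symmetric bracket $\tfrac1{n+s}+\tfrac1{n+t}$ of Proposition~\ref{Prop2} rather than some less symmetric expression, and confirm that the factor $(-1)^\nu\delta_{p,0}$ carries precisely the weight needed to restore the ``missing'' $n=0$ contribution to $\beta_{\mathbb{C}}$. Once the closed form for $F(x,y)$ is established, the remaining deductions are purely formal.
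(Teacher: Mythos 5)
Your proposal is correct and follows essentially the same route as the paper: form the generating function of the $f_{\mu,\nu}$, evaluate it using the series~(\ref{AJ}) together with Proposition~\ref{Prop2} and~(\ref{betaShapiro}), and identify it with $\bigl(V^{({\rm cl})}(s,t,u)-1\bigr)/(-stu)$, from which both the symmetry and the membership in $\mathcal{R}$ follow. The only cosmetic differences are that the paper groups the two terms of~(\ref{A}) via $h=\nu+p$ into $(t+u)^h+(-t)^h$ rather than treating them as separate ``main'' and ``correction'' sums, and concludes symmetry directly from the $\mathfrak{S}_3$-symmetry of the closed form instead of passing explicitly to the variables $S$ and $T$.
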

\begin{proof}
The generating function of the numbers $f_{\mu,\nu}$ is given by
\begin{align}\label{Aa}
 \sum_{\mu,\,\nu\ge0}f_{\mu,\nu}\,t^\nu u^\mu  
 &\=  \sum_{h=0}^\infty\big((t+u)^h\;+\;(-t)^h\big)\,\sum_{r=0}^\infty Z(h+3,r)\,(-u)^r \qquad(h=\nu+p) \notag\\
&\= \sum_{n=1}^\infty\frac1{n^2}\,\Bigl(\frac1{n+s}\;+\;\frac1{n+t}\Bigr)\,\binom{n-u-1}{n-1}^2
\= \frac{1\,-\,V^{\text(cl)}(s,t,u)}{stu}\,,
\end{align}
where in the second line we have set $s=-t-u$ and used equation~(\ref{AJ}) for the first equality and 
equations~(\ref{sumrepbeta}) and~(\ref{betaShapiro}) for the second. This generating function is therefore symmetric not only in~$t$ and~$u$ 
as required, but in all three variables~$s$, $t$,~and~$u$, and every~$f_{\mu,\nu}$ can be expressed in terms of the Taylor coefficients of~$V^{\text(cl)}(s,t,u)$, which belong to $\mathcal{R}$.\\
\end{proof}

\subsection{Structure of the spaces $\mathcal{D}_w$ and $\mathcal{E}_w$ and proof of Theorem~\ref{Theorem 3}}\label{SSec:epq=0}

The ultimate goal of this section is to prove Theorem~\ref{Theorem 3}, but we first focus on proving the weaker statement $e_{p,q}\in \mathcal{R}$, needed for our main application in the next section. We will also stress how these results shed more light on the structure of the rational vector spaces\footnote{The range for the indices $p,q$ in this definition of $\mathcal{E}_w$ includes also the cases with $-q<p<0$, as opposed to the definition~(\ref{calEcalZ1}) given in the introduction where we set $p\geq 0$, but we use the same notation because Theorem~\ref{Theorem 3} says that the two spaces coincide.} of weight-$w$ \MZVs\
\begin{equation}\label{calEcalZ2}
\mathcal{E}_w\,:=\,\big{\langle} \,e_{p,q}\;\big{|}\;q>0,\,p+q>0,\, 2p+3q=w\,\big{\rangle}_{\mathbb{Q}} \,\,\, \subseteq \,\,\, \mathcal{D}_w\,:=\,\big{\langle} \,Z(w-r,r)\;\big{|} \;0\leq r\leq w-2\,\big{\rangle}_{\mathbb{Q}}.
\end{equation}

The main ingredient of the proof is to consider for every integer $w\geq 3$ also the vector space
\begin{equation*}
\mathcal{F}_w\;:=\;\big{\langle} \,f_{\mu,\nu}\;\big{|}\;\mu,\nu\geq 0,\,\mu+\nu+3=w\,\big{\rangle}_{\mathbb{Q}} \,\,\, \subseteq \,\,\, \mathcal{E}_w\,\cap\, \mathcal{R} 
\end{equation*}
spanned by the numbers $f_{\mu,\nu}$ defined by~(\ref{A}) and contained in $\mathcal{E}_w\,\cap\, \mathcal{R}$ by~(\ref{Aa}). We will show that it coincides with $\mathcal{E}_w$.
\begin{prop}\label{PropPoint1}
For all~$w\geq 3$ and all $0\leq \nu\leq w-3$ we have
\begin{equation}\label{etofnew}
f_{w-3-\nu,\nu}\=\sum_{0<j<w/2}\,\lambda_{j,\nu}(w)\,e_{3j-w,w-2j},
\end{equation}
where $\lambda_{j,\nu}(w)$ are the polynomials defined by the generating series
\begin{equation}\label{GenSerLam}
\sum_{\nu\ge 0,j\geq 1}\,\lambda_{j,\nu}(w)\,X^{j-1}\,Y^{\nu}\;:=\;\frac{(1-t)^{w-2}}{(1-3t)\,\big(1-t\,(1+Y+Y^2)\big)} \qquad \,\,\, (X\=t\,(1-t)^2).
\end{equation}
\end{prop}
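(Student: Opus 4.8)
The plan is to prove the identity~(\ref{etofnew}) by comparing, on both sides, the coefficient of each multiple zeta value $Z(w-r,r)$ of weight~$w$. Since $f_{w-3-\nu,\nu}$ is by its definition~(\ref{A}) an explicit rational combination of the $Z(w-r,r)$, and each $e_{3j-w,w-2j}$ is by its definition~(\ref{AE}) also such a combination, it suffices to match these $Z(w-r,r)$ by $Z(w-r,r)$; this sidesteps entirely any question of linear independence among the $Z(k,r)$. First I would read off from~(\ref{A}) that the coefficient of $Z(w-r,r)$ in $f_{w-3-\nu,\nu}$ is $(-1)^r\bigl(\binom{w-3-r}{\nu}+(-1)^\nu\delta_{r,\,w-3-\nu}\bigr)$, and from~(\ref{AE}) that the coefficient of $Z(w-r,r)$ in $e_{3j-w,w-2j}$ is $(-1)^r\,C(3j-w,w-2j,r)$. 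After cancelling $(-1)^r$, the whole proposition reduces to the purely combinatorial identity
\[
\binom{w-3-r}{\nu}+(-1)^\nu\delta_{r,\,w-3-\nu}\=\sum_{0<j<w/2}\lambda_{j,\nu}(w)\,C(3j-w,w-2j,r)\qquad(0\le r\le w-3),
\]
the boundary case $r=w-2$ being trivial, since there the left side is empty and $C(3j-w,w-2j,w-2)=0$ for all $j\ge1$.

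To handle all $\nu$ at once I would package this in a generating series in a variable $Y$. On the left, summing over $\nu\ge0$ collapses to the closed form $(1+Y)^{w-3-r}+(-Y)^{w-3-r}$, the second summand arising from the Kronecker delta. On the right I would set $\Phi_j(Y):=\sum_{\nu\ge0}\lambda_{j,\nu}(w)\,Y^\nu$, so that by~(\ref{GenSerLam}) one has $\sum_{j\ge1}\Phi_j(Y)\,X^{j-1}=\frac{(1-t)^{w-2}}{(1-3t)\,(1-t(1+Y+Y^2))}$ with $X=t(1-t)^2$. The key simplification here is Lagrange inversion: because $\frac{dX}{dt}=(1-t)(1-3t)$, extracting the coefficient of $X^{j-1}$ turns the awkward factor $\frac1{1-3t}$ into the Jacobian and yields the clean formula $\Phi_j(Y)=[t^{j-1}]\,\frac{(1-t)^{w-1-2j}}{1-t(1+Y+Y^2)}$. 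The identity to be proven becomes
\[
\sum_{0<j<w/2}C(3j-w,w-2j,r)\,\Phi_j(Y)\=(1+Y)^{w-3-r}+(-Y)^{w-3-r}.
\]

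For the last step I would write both factors as contour integrals. Using~(\ref{AF}) with $p=3j-w$ gives $C(3j-w,w-2j,r)=\text{Res}_{y=0}\bigl[(2+y)(1-y)^{w-1}\phi^r\,y^{-w}\,(y^2/(1-y)^3)^j\bigr]$ with $\phi=\frac{1-\sqrt{1-4y}}2$, while the formula for $\Phi_j$ gives $\Phi_j(Y)=\text{Res}_{t=0}\bigl[\frac{(1-t)^{w-1}}{1-t(1+Y+Y^2)}\,(t(1-t)^2)^{-j}\bigr]$. Multiplying these, the entire $j$-dependence is the single power $\bigl(\frac{y^2}{t(1-t)^2(1-y)^3}\bigr)^j$, so summing the (finite) geometric series and interchanging with the residues reduces the left-hand side to a double residue of an explicit rational function of $t$ and $y$. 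The decisive move is then the substitution $y=\phi(1-\phi)$, $t=\frac{\phi^2}{1-\phi+\phi^2}$, which is precisely the one making $t(1-t)^2=y^2/(1-y)^3$ and thereby collapsing the two Lagrange variables to the single variable $\phi$; under it one computes $1-t(1+Y+Y^2)=\frac{(1+Y\phi)(1-(1+Y)\phi)}{1-\phi+\phi^2}$, whose two linear factors (with roots $\phi=-1/Y$ and $\phi=1/(1+Y)$) are exactly what produce the two summands $(-Y)^{w-3-r}$ and $(1+Y)^{w-3-r}$ upon evaluating the remaining residue in~$\phi$.

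I expect the main obstacle to be this final residue computation: correctly guessing the unifying substitution $t=\phi^2/(1-\phi+\phi^2)$, justifying the interchange of the geometric summation with the residues (equivalently, pinning down which poles the contour encircles and checking that the putative contributions from $j$ outside the range $0<j<w/2$ vanish), and carrying out the partial-fraction bookkeeping in~$\phi$ so that exactly the two clean powers of $(1+Y)$ and $(-Y)$ survive. Everything preceding that step is routine coefficient extraction and Lagrange inversion.
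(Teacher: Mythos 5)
Your proposal is correct, and its skeleton is exactly the paper's: the reduction to matching coefficients of $Z(w-r,r)$, the packaging of the $\nu$-dependence into $\Phi_j(Y)=\Lambda_j(w,Y)$, the closed form $(1+Y)^{w-3-r}+(-Y)^{w-3-r}$ for the left-hand side, and even your ``decisive'' substitution, which is precisely the paper's uniformizer (called $u$ there, your $\phi$), via $t=u^2/(1-u+u^2)$ and $y=u(1-u)$. Where you genuinely diverge is in how the sum over $j$ is handled. You expand each factor separately --- $\Phi_j(Y)$ by Lagrange inversion as a $t$-residue (this is exactly the paper's formula~(\ref{E}), which the paper derives only \emph{after} the proof, to establish triangularity of the matrix $(\lambda_{j,\nu}(w))$ and the corollary $\mathcal{E}_w=\mathcal{F}_w$), and $C(3j-w,w-2j,r)$ as a $y$-residue --- then resum a geometric series inside a double residue and localize on the diagonal $t(1-t)^2=y^2/(1-y)^3$. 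That can be made to work, but it is the sole source of your anticipated ``main obstacle'': after summation the kernel $\sigma/(\tau-\sigma)$ (with $\sigma=y^2/(1-y)^3$, $\tau=t(1-t)^2$) is \emph{regular} at $t=0$, so the interchange is legitimate only if the $t$-residue is reinterpreted as a contour integral (say $|t|=\epsilon$, $|y|=\epsilon^2$) that picks up the root of $\tau=\sigma$ near $t\approx\sigma$ rather than the point $t=0$; you also need the observation that $C(3j-w,w-2j,r)=0$ once $2j\ge w-r$ (immediate from your residue formula) in order to extend the sum to all $j\ge1$. The paper's route makes this issue evaporate: it substitutes $t=u^2/(1-u+u^2)$, i.e.\ $X=u^2(1-u)^2/(1-u+u^2)^3$, \emph{directly} into the defining series~(\ref{GenSerLam}), keeping the $j$-sum intact, which gives the one-variable power-series identity~(\ref{D}); it then recognizes $C(3j-w,w-2j,w-3-n)$ as the coefficient of $u^n$ in the $j$-th summand of~(\ref{D}), using $y=u(1-u)$ in the residue formula from~(\ref{AF}), and simply compares coefficients of $u^n$, the right-hand side $\frac1{1+uY}+\frac1{1-u(1+Y)}$ yielding $(-Y)^n+(1+Y)^n$. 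In short: your version is correct and self-contained per $j$, but pays for it with a contour/interchange justification and pole bookkeeping; the paper's direct substitution keeps the whole argument a formal power-series identity in one variable, so the obstacle you flag never arises.
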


\begin{proof}
Comparing for every~$r$ the coefficients of $Z(w-r,r)$ and substituting $n=w-3-r$, the statement is reduced to proving for all $0\leq n\leq w-3$ and $0\leq \nu\leq w-3$ the identity
\begin{equation*}
\Big(1\,+\,(-1)^\nu\,\delta_{\nu,n}\Big)\,\binom{n}{\nu}\=\sum_{0<j<w/2}\,\lambda_{j,\nu}(w)\,C(3j-w,w-2j,w-3-n).
\end{equation*}
We consider the generating function $\Lambda_j(w,Y):=\sum_{\nu\geq 0}\lambda_{j,\nu}(w)Y^\nu$, and we prove the stronger statement that for all $n\in\mathbb{Z}_{\geq 0}$ and for all\footnote{Notice that $C(p,q,r)$ is well-defined by~(\ref{AF}) for any $(p,q,r)\in\mathbb{C}^3$ such that $q-r\in\mathbb{Z}$, and vanishes if $q-r\leq 0$.} $w\in\mathbb{C}$
\begin{equation}\label{B}
\sum_{1\le j\le\frac{n+2}2} \Lambda_j(w,Y)\,C(3j-w,w-2j,w-3-n) \= (1+Y)^n\;+\;(-Y)^n.
\end{equation}
Substituting $t=\frac{u^2}{1-u+u^2}$  and $X=\frac{u^2(1-u)^2}{(1-u+u^2)^3}$ in the expansion~(\ref{GenSerLam}) gives 
\begin{equation}\label{D}
\sum_{j\ge1} \Lambda_j(w,Y)\,\frac{(1+u)(2-u)(1-2u)\,u^{2j-2} }{(1-u)^{w-2j}(1-u+u^2)^{3j-w+1}}
  \= \frac{1}{1+uY} \;+\; \frac{1}{1-u(1+Y)}\,.
\end{equation}
On the other hand, substituting $y=u(1-u)$ in the definition of $C(p,q,r)$ gives the formula
\begin{align*}
 C(p,q,r) &\= \text{Res}_{y=0}\bigg[\frac{2+y}{(1-y)^{p+1}}\,\biggl(\frac{1-\sqrt{1-4y}}2\biggr)^r\,\frac{dy}{y^{q}}\bigg] \\
  & \= \text{Res}_{u=0}\bigg[\frac{(1+u)(2-u)(1-2u)}{(1-u)^q(1-u+u^2)^{p+1}}\,\frac{du}{u^{q-r}}\bigg]\,.
\end{align*}
Equation~(\ref{B}) then follows by comparing the coefficients of~$u^n$ on both sides of~(\ref{D}).\\
\end{proof}
At first sight, this proposition only implies the already known inclusion $\mathcal{F}_w\subseteq\mathcal{E}_w$. We show that it implies also the opposite inclusion. Rewriting $\Lambda_j(w,Y)$ from the previous proof as a residue first at $X=0$ and then at~$t=0$ gives
\begin{equation}\label{E}
\Lambda_j(w,Y) \= \text{Res}_{t=0}\bigg[\frac{(1-t)^{w-2j-1}}{1-t(1+Y+Y^2)}\,\frac{dt}{t^j}\bigg] \= \sum_{r=0}^{j-1}(-1)^r\binom{w-2j-1}r\,(1+Y+Y^2)^{j-1-r} \,. 
\end{equation}
From this formula we find that for each $j\geq 1$ the top coefficients $\lambda_{j,\nu}(w)$ read
\vspace{5mm}
\bgroup
\def\arraystretch{1.5}
\begin{center}
\begin{tabular}{  c | c | c | c | c }
  $\nu$ & $> 2j-2$ & $2j-2$ & $2j-3$ & $2j-4$ \\ [0.5ex]
  \hline
  $\lambda_{j,\nu}(w)$  & 0 & 1 & $j-1$ & $\binom{j+2}{2}-w$ \\
\end{tabular}\,\, ,
\end{center}
\egroup
\vspace{5mm}
\noindent as one can see from the example
\[
\Big(\lambda_{j,k-1}(12)\Big)_{\substack{j=1,\ldots ,5\\k=1,\ldots ,10}}\=
\begin{pmatrix}
  1 & 0 & 0 & 0 & 0 & 0 & 0 & 0 & 0 & 0 \\
  -6 & 1 & 1 & 0 & 0 & 0 & 0 & 0 & 0 & 0 \\
  6 & -3 & -2 & 2 & 1 & 0 & 0 & 0 & 0 & 0 \\
  0 & 0 & 0 & 1 & 3 & 3 & 1 & 0 & 0 & 0 \\
  0 & 1 & 4 & 9 & 13 & 13 & 9 & 4 & 1 & 0 \\
 \end{pmatrix}\,.
\]
The triangular shape of the matrix $\big(\lambda_{j,k-1}(w)\big)_{j,k}$ shows that it has full rank, and therefore proves the desired opposite inclusion:
\begin{cor}
For every integer $w\geq 3$ we have $\mathcal{E}_w=\mathcal{F}_w$ and therefore $\mathcal{E}_w\subset\mathcal{R}$.
\end{cor}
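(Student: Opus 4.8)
The plan is to combine the two inclusions already available and reduce the corollary to a single invertibility statement. Proposition~\ref{PropPoint1} gives $\mathcal{F}_w\subseteq\mathcal{E}_w$, and the corollary in Section~\ref{SSec:Beta} shows that every $f_{\mu,\nu}$ lies in $\mathcal{R}=\mathbb{Q}[\zeta(3),\zeta(5),\ldots]$, so $\mathcal{F}_w\subseteq\mathcal{R}$. Thus the only missing piece is the reverse inclusion $\mathcal{E}_w\subseteq\mathcal{F}_w$. Now formula~(\ref{etofnew}) expresses each generator $f_{w-3-\nu,\nu}$ of $\mathcal{F}_w$ as the explicit $\mathbb{Q}$-linear combination $\sum_{0<j<w/2}\lambda_{j,\nu}(w)\,e_{3j-w,w-2j}$ of the generators of $\mathcal{E}_w$. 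If I can show that this linear system can be solved for the $e$'s, i.e. that the coefficient matrix $\big(\lambda_{j,\nu}(w)\big)$ has rank equal to the number of $e$-generators, then each $e_{3j-w,w-2j}$ is in turn a $\mathbb{Q}$-combination of the $f_{w-3-\nu,\nu}$, yielding $\mathcal{E}_w\subseteq\mathcal{F}_w$ and hence $\mathcal{E}_w=\mathcal{F}_w\subseteq\mathcal{R}$.

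The crux is therefore to read off the leading behaviour of the $\lambda_{j,\nu}(w)$ from the closed expression~(\ref{E}) for $\Lambda_j(w,Y)=\sum_\nu\lambda_{j,\nu}(w)\,Y^\nu$. In the sum $\sum_{r=0}^{j-1}(-1)^r\binom{w-2j-1}{r}(1+Y+Y^2)^{j-1-r}$, only the term $r=0$ reaches degree $2(j-1)$ in $Y$, and it does so with leading coefficient $1$, while every term with $r\ge1$ has $Y$-degree at most $2(j-1)-2$. Hence $\lambda_{j,\nu}(w)=0$ for $\nu>2j-2$ and $\lambda_{j,2j-2}(w)=1$, matching the top entries of the table preceding the corollary. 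This is the one place where a genuine (if short) computation is required, and it is the main obstacle: everything downstream is formal linear algebra, so the whole argument hinges on this echelon pattern of the leading coefficients.

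With these values in hand the conclusion is immediate. The generators of $\mathcal{E}_w$ are indexed by $j=1,\dots,N$ with $N=\lfloor(w-1)/2\rfloor$, and for each such $j$ the row $\big(\lambda_{j,\nu}(w)\big)_\nu$ has its last nonzero entry at $\nu=2j-2$, equal to $1$. Selecting the columns $\nu=0,2,4,\dots,2N-2$ (all within the range $0\le\nu\le w-3$) therefore produces a square $N\times N$ submatrix that is triangular with $1$'s on the diagonal, hence invertible over $\mathbb{Q}$; this is exactly the full-rank ``triangular shape'' illustrated by the displayed $w=12$ example. Inverting this submatrix writes each generator $e_{3j-w,w-2j}$ of $\mathcal{E}_w$ as a $\mathbb{Q}$-linear combination of the numbers $f_{w-3-\nu,\nu}$ with $\nu=0,2,\dots,2N-2$, all of which lie in $\mathcal{F}_w$. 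This gives $\mathcal{E}_w\subseteq\mathcal{F}_w$, and combined with the inclusions of the first paragraph completes the proof that $\mathcal{E}_w=\mathcal{F}_w\subseteq\mathcal{R}$.
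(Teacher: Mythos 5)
Your proposal is correct and follows essentially the same route as the paper: the inclusion $\mathcal{F}_w\subseteq\mathcal{E}_w$ from Proposition~\ref{PropPoint1}, membership $f_{\mu,\nu}\in\mathcal{R}$ from the corollary of Proposition~\ref{Prop2}, and the reverse inclusion from the echelon pattern $\lambda_{j,\nu}(w)=0$ for $\nu>2j-2$, $\lambda_{j,2j-2}(w)=1$, read off from formula~(\ref{E}), which is exactly the ``triangular shape implies full rank'' argument the paper uses. Your explicit selection of the columns $\nu=0,2,\dots,2N-2$ to exhibit an invertible square submatrix just spells out the inversion step the paper leaves implicit.
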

Notice that, assuming standard conjectures on the structure of the $\mathbb{Q}$-algebra of \MZVs, the numbers $Z(k,r)$ do not in general belong to~$\mathcal{R}$ (e.g. $Z(2k,0)=\zeta(2k)$), so this result implies that in general the inclusion~(\ref{calEcalZ2}) is strict\footnote{Numerical experiments suggest that the inclusion is strict for all $w\neq 3$.}.

As mentioned previously, this corollary is already sufficient for the application discussed in the next section. To obtain the full statement of Theorem~\ref{Theorem 3} we need to study the polynomials~$\lambda_{j,\nu}(w)$ in more detail.
We first remark that the striking symmetry of the last two rows of the matrix above is not an accident, and it generalizes as follows:
\begin{lemma}\label{PropPoint2}
For $w\in\{2j+1,\ldots,3j\}$ and $0\le\nu\le w-3$ we have 
$\lambda_{j,\nu}(w)=\lambda_{j,w-3-\nu}(w)$.
\end{lemma}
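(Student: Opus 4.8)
The plan is to translate the coefficientwise symmetry $\lambda_{j,\nu}(w)=\lambda_{j,w-3-\nu}(w)$ into a single \emph{reciprocity} (palindrome) relation for the generating polynomial $\Lambda_j(w,Y)=\sum_{\nu\ge0}\lambda_{j,\nu}(w)\,Y^\nu$ from the proof of Proposition~\ref{PropPoint1}, namely $Y^{w-3}\,\Lambda_j(w,1/Y)=\Lambda_j(w,Y)$, and to establish this relation by first extracting a clean closed form for $\Lambda_j(w,Y)$ that is valid precisely in the range $w\in\{2j+1,\ldots,3j\}$.

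First I would simplify the sum in~(\ref{E}). Setting $m:=w-2j-1$, the hypothesis $2j+1\le w\le 3j$ is exactly the statement that $m\in\{0,\ldots,j-1\}$; in particular $m\ge0$ and $3j-w=j-1-m\ge0$. Since $\binom{w-2j-1}{r}=\binom{m}{r}$ vanishes for $r>m$, the sum in~(\ref{E}) terminates at $r=m$, and pulling out the common factor $(1+Y+Y^2)^{j-1-m}$ leaves the binomial sum $\sum_{r=0}^{m}(-1)^r\binom{m}{r}(1+Y+Y^2)^{m-r}=\bigl((1+Y+Y^2)-1\bigr)^m=(Y+Y^2)^m$. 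Hence
\begin{equation*}
\Lambda_j(w,Y)\=(1+Y+Y^2)^{3j-w}\,\bigl(Y(1+Y)\bigr)^{\,w-2j-1}\,.
\end{equation*}
Because both exponents are nonnegative integers in the allowed range, this is a genuine polynomial, supported on the degrees from $w-2j-1$ up to $2j-2$, all of which lie in the interval $[0,w-3]$ as soon as $w\ge2j+1$.

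Next I would verify the reciprocity directly from this closed form, using the two elementary identities $1+Y^{-1}+Y^{-2}=Y^{-2}(1+Y+Y^2)$ and $Y^{-1}(1+Y^{-1})=Y^{-2}(1+Y)$. Substituting $Y\mapsto1/Y$ produces an overall factor $Y^{-2(3j-w)-2(w-2j-1)}$ times $(1+Y+Y^2)^{3j-w}(1+Y)^{w-2j-1}$; multiplying by $Y^{w-3}$ and collecting the powers of $Y$ gives the exponent $w-3-2(3j-w)-2(w-2j-1)=w-2j-1$, so that $Y^{w-3}\Lambda_j(w,1/Y)=(1+Y+Y^2)^{3j-w}\bigl(Y(1+Y)\bigr)^{w-2j-1}=\Lambda_j(w,Y)$. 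Since $\Lambda_j(w,Y)$ has degree $\le w-3$, reading off the coefficients of $Y^\nu$ on both sides yields $\lambda_{j,\nu}(w)=\lambda_{j,w-3-\nu}(w)$ for all $0\le\nu\le w-3$, as claimed.

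I do not expect any real obstacle here: once the closed form is in hand, the reciprocity is pure exponent bookkeeping. The one point that genuinely requires the hypothesis $w\in\{2j+1,\ldots,3j\}$ --- and hence the only place where care is needed --- is the reduction of~(\ref{E}) to the closed form: outside this range one of the exponents $3j-w$ or $w-2j-1$ becomes negative, the expression ceases to be a polynomial, and the binomial collapse (together with the symmetry it implies) breaks down.
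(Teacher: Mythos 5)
Your proof is correct and takes essentially the same route as the paper: in the stated range the binomial sum in~(\ref{E}) collapses to the closed form $\Lambda_j(w,Y)=(Y+Y^2)^{w-2j-1}\,(1+Y+Y^2)^{3j-w}$, which is exactly the identity the paper derives before declaring the symmetry obvious. The only difference is that you make explicit the palindrome bookkeeping $Y^{w-3}\Lambda_j(w,1/Y)=\Lambda_j(w,Y)$ that the paper leaves to the reader.
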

\begin{proof}
If $w$ is an integer in the range stated, then the sum~(\ref{E}) terminates at $r=w-2j-1$ and we find
\begin{equation*}
\Lambda_j(w,Y) \= (Y+Y^2)^{w-2j-1}\,(1+Y+Y^2)^{3j-w}\,,
\end{equation*}
from which the asserted symmetry is obvious.\\
\end{proof}


\begin{proof}[Proof of Theorem~\ref{Theorem 3}]
Equation~(\ref{etofnew}) together with the symmetry statements in Lemma~\ref{PropPoint2} and~in the corollary of Proposition~\ref{Prop2} gives the relations $\sum_{0<j<w/3}\lambda_{j,\nu}^-(w)\,e_{3j-w,w-2j}=0$ for 
all~$w\in\mathbb{Z}_{\ge3}$, where~$\lambda_{j,\nu}^-(w)$ denotes the antisymmetrized coefficient  
$\lambda_{j,\nu}(w)-\lambda_{j,w-3-\nu}(w)$.  To show the vanishing of the $e_{p,q}$ with $-q<p<0$
it therefore suffices to show that the matrix $\bigl(\lambda_{j,\nu}^-(w)\bigr)_{0<j<w/3,0\le\nu\le w-3}$
has maximal rank $\big\lfloor\frac{w-1}3\big\rfloor$, or equivalently that the antisymmetrized
polynomials 
\begin{equation*}
\Lambda_{w,j}^-(Y)\;:=\;\Lambda_j(w,Y)\,-\,Y^{w-3}\,\Lambda_j(w,1/Y) \qquad (0<j<w/3)
\end{equation*}
are linearly
independent.  If we identify the space $V_{w-3}=\langle Y^\nu\rangle_{0\le\nu\le w-3}$
with the weight~$w-3$ part of the vector space $V=\Q[s,t,u]/(s+t+u=0)$ via the homogenization
operator $Y^\nu\mapsto t^\nu u^{w-3-\nu}$, then equation~(\ref{E}) implies that the space $L_w:=\langle\Lambda_j(w,Y)\rangle_{0<j<w/2}$ coincides with the subspace of~$V_{w-3}$ symmetric under the interchange of~$s$~and~$t$, and hence that the space $L_w^-$
spanned by the~$\Lambda_{w,j}^-$ with $0<j<w/2$ (or equivalently, in view of Lemma~\ref{PropPoint2}, by
the $\Lambda_{w,j}^-$ with $0<j<w/3$) is isomorpic to the image of~$L_w$ under $1-\iota$,
where~$\iota$ denotes the involution~$t\leftrightarrow u$.  But the kernel of the map 
$1-\iota:L_w\twoheadrightarrow L_w^-$ is just the space~$V_{w-3}^{\mathfrak{S}_3}$ of homogeneous polynomials 
of degree~$w-3$ that are symmetric in all three variables $s,\,t,\,u$.  Hence the dimension 
of~$L_w^-$ equals the coefficient of~$x^{w-3}$ in the generating series 
$\frac1{(1-x)(1-x^2)}\,-\,\frac1{(1-x^2)(1-x^3)}$, which is~$\big\lfloor\frac{w-1}3\big\rfloor$ as desired.\\
\end{proof}

The statement of Theorem~\ref{Theorem 3} gives relations between the numbers $Z(k,r)$, and it therefore reduces the upper bound on the dimension of the space $\mathcal{D}_w$ from $w-1$ to $\big\lfloor\tfrac{2w}{3}\big\rfloor$. Experimentally, this bound seems to be sharp for $w\geq 10$, i.e. as soon as $\big\lfloor\tfrac{2w}{3}\big\rfloor$ is lower than $\dim(\mathcal{Z}_w)$, while for $w\leq 9$ $\dim(\mathcal{D}_w)=\dim(\mathcal{Z}_w)$. As for the dimension of $\mathcal{E}_w$, Theorem~\ref{Theorem 3} gives the upper bound $\big\lfloor\tfrac{w+1}{6}\big\rfloor$ for $w\neq 3\,\,\text{mod}\,\,6$ and $\big\lfloor \tfrac{w}{6}\big\rfloor+1$ for $w=3\,\,\text{mod}\,\,6$. Experimentally, this bound seems to be sharp for all $w\geq 2$. Note that this dimension coincides with that of the space of cusp forms of weight~$2w+6$ on $\text{SL}_2(\mathbb{Z})$, but we do not know whether there is a direct connection between the space $\mathcal{E}_w$ and modular forms or their period polynomials.

\section{Two-point modular graph functions}\label{Sec:MGF}

\noindent At genus one, the study of closed superstring amplitudes led to a new class of real analytic modular functions associated to graphs, now known as \emph{modular graph functions}~\cite{DGGV, ZerbiniThesis}. The four-point amplitude involves\footnote{More precisely, it is given by integrals of the modular graph functions discussed here over the moduli space $\overline{\mathfrak{M}_{1,1}}$.} contributions from two-vertex, three-vertex and four-vertex graphs. Here we only consider two-vertex modular graph functions, which constitute a family parametrized by an integer $\l\ge0$ (the number of edges between the two vertices). They were first systematically studied in ~\cite{GRV} by Green, Russo and Vanhove and are defined as the average
\begin{equation}\label{BE}
D_\l(\t) \= \text{Av}_z\bigl(G(z,\t)^\l\bigr) \,:=\, \iint_{\C/\Lambda_\t} G(z,\t)^\l\,d\mu(z)\,.
\end{equation}
Here $\t$ is a variable in the complex upper half-plane, $\Lambda_\t\subset\C$ the
lattice $\Z\t+\Z$, $d\mu(z)=\tfrac{i\,dz\,d\overline{z}}{2\,\textup{Im}(\tau)}$ the normalized translation-invariant measure on the elliptic
curve $\C/\Lambda_\t$, and $G(z,\t)$ the Green's function on the torus (see the appendix), which is defined in terms of the odd Jacobi function $\theta_1(z,\tau)$ and the Dedekind function $\eta(\tau)$ by
\begin{equation}\label{DefGreenfct}
G(z,\tau)\=-\log\bigg|\frac{\theta_1(z,\tau)}{\eta(\tau)}\bigg|^2\;+\;\frac{2\pi \,\mbox{Im}(z)^2}{\mbox{Im}(\tau)}.
\end{equation}
The function~$G$ is invariant under translations $z\mapsto z+\o$ ($\o\in\Lambda_\t$) and modular
transformations $\,(z,\t)\mapsto\Bigl(\dfrac z{c\t+d},\,\dfrac{a\t+b}{c\t+d}\Bigr)\,$ with 
$\,\Bigl(\begin{matrix} a&b\\c&d\end{matrix}\Bigr)\in\text{SL}_2(\Z)$, so that the functions $ D_\l(\t)$ are
well-defined and $\,\text{SL}_2(\Z)$-invariant. It was remarked in~\cite{GV2000} that $D_1(\tau)=0$ and $D_2(\tau)=E(2,\tau)$, where $E(s,\tau)$ is the non-holomorphic Eisenstein series. Moreover, the first author proved in some unpublished notes that $D_3(\tau)=E(3,\tau)+\zeta(3)$ (see~\cite{DGV2015A} for a more recent published proof). In general, however, these functions are known to constitute an interesting new family of modular function which goes beyond special values of non-holomorphic Eisenstein series and is conjecturally contained into a class of real analytic modular forms constructed by Brown from iterated Eichler integrals of Eisenstein series~\cite{BrownNewClass}. The behavior at infinity can be found by using the product
expansions of $\theta_1$ and~$\eta$ to write $G(z,\t)$ as an infinite sum of logarithms of the form
$\,\log(1-t)\,$ and then expanding each of these as a series in~$t$ and carrying out the remaining summation
and integration. The result of this computation is the following:
\begin{prop}[Green, Russo, Vanhove, Zagier~\cite{GRV}]\label{TeoGRV}
If we set $Y:=2\pi\,\rm{Im}(\tau)\rightarrow +\infty$,\footnote{Notice that we deviate from the usual notation $y=\pi\,\mbox{Im}(\tau)$ employed in most references, so our formulas look slightly different.} then
\begin{equation*}
D_\l(\tau)\=d_{\l}(Y)\;+\;O(e^{-Y})
\end{equation*}
with 
\begin{equation}\label{Formula2pointclosed}
d_{\l}(Y)\=\bigg(\frac{Y}{6}\bigg)^{\l} {}_2F_1\biggl(1,-\l;\,\frac32;\,\frac32\biggr)\;+\sum_{\substack{a+b+c+m=\l\\m\geq 2}}\frac{\l!\,(2a+b)!}{a!\,b!\,c!}\,\frac{(-1)^{b}}{2^{2a+b}\,6^c}\,Z(2a+b+3,m-2)\,Y^{c-a-1},
\end{equation}
where $Z(k,r)$ are the numbers defined by~(\ref{Z}).
\end{prop}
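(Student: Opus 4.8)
The plan is to compute the asymptotic expansion of $D_\l(\tau)$ directly from the definition~(\ref{BE}), following the strategy sketched in the paragraph preceding the statement. First I would use the product expansions of $\theta_1$ and $\eta$ to rewrite the Green's function~(\ref{DefGreenfct}) explicitly as a sum $G(z,\tau)=\frac{2\pi\,\mbox{Im}(z)^2}{\mbox{Im}(\tau)}-\log|1-e^{2\pi i z}|^2-\sum_{n\ge1}\log\bigl|(1-q^n e^{2\pi i z})(1-q^n e^{-2\pi i z})\bigr|^2$ where $q=e^{2\pi i\tau}$, plus a purely $\tau$-dependent piece coming from $\eta$ that cancels against the normalization. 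Writing $z=a\tau+b$ with $a,b\in[0,1)$, the first (lattice-sum) term produces the polynomial-in-$Y$ part, while the logarithmic terms, once expanded as power series $\log(1-t)=-\sum_{k\ge1}t^k/k$ in $t=q^ne^{\pm2\pi iz}$, contribute the exponentially small corrections together with a finite number of terms that survive in the $Y\to\infty$ limit.

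The key computation is then to raise $G$ to the $\l$-th power, expand multinomially, and integrate term by term against $d\mu(z)=\,da\,db$. Each monomial in the expansion of $G^\l$ is a product of factors of three types: powers of the quadratic term $2\pi a^2 Y/(2\pi)=a^2 Y$ (after normalizing $Y=2\pi\,\mbox{Im}(\tau)$), and two families of logarithmic exponentials. The integration over $b\in[0,1)$ forces a momentum-conservation condition that pairs up the exponentials $e^{2\pi ik b}$ from different factors, and the integration over $a$ of $a^{2a+\cdots}e^{-2\pi(\cdots)aY}$ type terms — after the Gaussian-like integrals and the extraction of the leading behavior as $Y\to\infty$ — yields precisely the beta-integral-flavored coefficients, with factorials $\frac{\l!}{a!\,b!\,c!}$ recording the multinomial count of how many factors of each type were chosen and $(2a+b)!$ arising from the $a$-integration. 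The crucial point is that when one collects, for fixed exponent $Y^{c-a-1}$, the sum over all internal summation indices of the logarithmic expansions, the resulting number is exactly the nested sum $\sum_{0<n_1<\cdots}n_1^{-k_1}\cdots$ repackaged into the combination $Z(2a+b+3,m-2)$ defined in~(\ref{Z}), with the weights $2^{\#\{r_i=2\}}$ coming from the symmetric $q^ne^{2\pi iz}\leftrightarrow q^ne^{-2\pi iz}$ pairing of positive and negative frequencies.

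The main obstacle is organizing the combinatorics of the term-by-term integration so that the various nested lattice sums collapse exactly into the numbers $Z(k,r)$ rather than into some larger or less structured family of multiple zeta values. Concretely, the difficulty lies in verifying that after the $b$-integration the surviving configurations are those in which the summation variables $n_i$ are strictly ordered, and that the factor of $2$ per index equal to $2$ in~(\ref{Z}) is reproduced by the two sign choices $\pm2\pi i z$ available whenever two edges connect to the same intermediate point; this is the bookkeeping that turns a naive sum over all frequency assignments into the precise generating combination. I would handle this by introducing a generating series in auxiliary variables tracking the edge-type choices, so that the multinomial coefficient, the power of $Y$, and the $Z(k,r)$ all emerge simultaneously from a single residue or coefficient extraction, and then match the result against the hypergeometric prefactor $(Y/6)^\l\,{}_2F_1(1,-\l;3/2;3/2)$, which accounts for the purely polynomial ($m\le1$) contributions where no nontrivial lattice sum appears.
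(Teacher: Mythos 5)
Your overall strategy --- expanding $G(z,\tau)$ via the product formulas for $\theta_1$ and $\eta$, raising to the $\ell$-th power, and integrating term by term --- is exactly the route the paper indicates in the sentence preceding the proposition; the paper itself offers no proof beyond that sketch, deferring to~\cite{GRV}. However, your execution goes wrong at the very first step. Writing $z=\alpha\tau+\beta$ with $\alpha,\beta\in[0,1)$, the product expansion gives
\begin{equation*}
G(z,\tau)\;=\;Y\Bigl(\alpha^2-\alpha+\tfrac16\Bigr)\;-\;\log\bigl|1-e^{2\pi iz}\bigr|^2\;-\;\sum_{n\ge1}\log\bigl|(1-q^ne^{2\pi iz})(1-q^ne^{-2\pi iz})\bigr|^2,
\end{equation*}
where the terms $-\alpha Y$ and $Y/6$ come from $|e^{-\pi iz}|^2$ inside $|2\sin\pi z|^2$ and from the prefactor $|q^{1/12}|^2$ in $\theta_1/\eta$, respectively: the $\eta$-contribution does \emph{not} cancel, contrary to what you assert. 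This is fatal for the statement at hand, because the entire coefficient structure of~(\ref{Formula2pointclosed}) is the trinomial expansion of the Bernoulli factor $Y(\alpha^2-\alpha+\frac16)$: the indices $a,b,c$ with weights $(-1)^b$ and $6^{-c}$ count how many of the $\ell$ factors contribute $\alpha^2Y$, $-\alpha Y$ and $Y/6$, the factor $(2a+b)!\,Y^{-(2a+b+1)}$ comes from integrating $\alpha^{2a+b}$ against the surviving exponentials, and the purely polynomial part is $Y^\ell\int_0^1(\alpha^2-\alpha+\frac16)^\ell\,d\alpha=(Y/6)^\ell\,{}_2F_1\bigl(1,-\ell;\frac32;\frac32\bigr)$. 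With your truncated $\alpha^2Y$ the polynomial part would come out as $Y^\ell/(2\ell+1)$ (for instance $Y^2/5$ instead of the correct $Y^2/180$ at $\ell=2$), and your ``three types'' of factors (one power type plus two logarithmic families) cannot produce the multinomial $\ell!/(a!\,b!\,c!)$ over the three Bernoulli terms together with a separate index $m$ counting \emph{all} logarithmic factors.

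The second, and more substantial, gap is that the actual content of the proposition is never established: namely that, after the $\beta$-integration imposes momentum conservation and the $\alpha$-integration extracts the leading power of $Y^{-1}$, the remaining constrained frequency sums collapse into precisely the combinations $Z(2a+b+3,m-2)$ of~(\ref{Z}), with all inner arguments restricted to $\{1,2\}$ and with the weight $2^{\#\{i\,:\,r_i=2\}}$. You correctly single this out as the main obstacle, but you then only announce that you ``would handle this by introducing a generating series in auxiliary variables''; no such series is written down, no argument forces the strict ordering of the summation variables, and the origin of the factor~$2$ per index equal to~$2$ remains a heuristic about sign choices rather than a computation. Since everything else in the derivation is routine calculus, this unexecuted step \emph{is} the proof: as written, your text is a plausible plan that follows the paper's one-sentence description of the method of~\cite{GRV}, not a proof of the stated formula.
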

This proposition implies in particular that all coefficients of $d_{\l}(Y)$ belong to the $\mathbb{Q}$-algebra $\mathcal{Z}$ of \MZVs. The main result of this section is the much stronger assertion given by Theorem~A from the introduction, whose truth was suggested by the numerical calculations\footnote{More precisely, the formulas given in~\cite{GRV} show that all coefficients of $d_\l$ for $0\le\l\le5$ are polynomials in odd zeta values. For $\l=6$ the expression given in~\cite{GRV} involves both even and odd zeta values, but it turned out that this was due 
to an error of transcription in the data in their formula~(B.11) and that in fact also $d_6(Y)$ involved only odd 
zetas, making it reasonable to conjecture the truth of Theorem~\ref{Theorem 2}.} in~\cite{GRV} and whose statement we repeat here:
\begin{thm}\label{Theorem 2}
For every integer $\l\ge0$ the Laurent polynomial $d_\l(Y)$  has coefficients belonging
to the ring $\mathcal{R}$ generated over~$\Q$ by the odd Riemann zeta values.
\end{thm}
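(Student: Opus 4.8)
The plan is to argue coefficient-by-coefficient in $Y$, combining Proposition~\ref{TeoGRV} with the structural results of the previous subsection. Fix $\ell$ and read off from~(\ref{Formula2pointclosed}) the coefficient of a given power $Y^{j}$. The hypergeometric term $(Y/6)^{\ell}\,{}_2F_1(1,-\ell;\tfrac32;\tfrac32)$ terminates (since $-\ell\in\mathbb{Z}_{\le0}$) and contributes only a rational multiple of $Y^{\ell}$, hence lies in $\mathcal{R}$; in the remaining sum the constraint $a+b+c+m=\ell$ forces the weight $(2a+b+3)+(m-2)$ of every $Z(2a+b+3,m-2)$ multiplying $Y^{c-a-1}=Y^{j}$ to equal $w:=\ell-j$. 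Thus the $Y^{j}$-coefficient of $d_\ell(Y)$ is an explicit $\mathbb{Q}$-linear combination of the numbers $Z(w-r,r)$, and so lies in the space $\mathcal{D}_w$ of~(\ref{calEcalZ2}).

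Membership in $\mathcal{D}_w$ does not by itself give membership in $\mathcal{R}$, since a generic element of $\mathcal{D}_w$ involves even zeta values (e.g.\ $Z(2k,0)=\zeta(2k)$). The heart of the proof is therefore to show that this particular combination lands in the much smaller subspace $\mathcal{E}_w=\mathcal{F}_w$, which is already known to be contained in $\mathcal{R}$ by the corollary to Proposition~\ref{PropPoint1}, together with the corollary to Proposition~\ref{Prop2} exhibiting the $f_{\mu,\nu}$ as $\mathbb{Q}$-combinations of Taylor coefficients of $V^{({\rm cl})}$. Concretely I would exhibit the $Y^{j}$-coefficient of $d_\ell$ as a $\mathbb{Q}$-linear combination of the numbers $f_{\mu,\nu}$ with $\mu+\nu+3=w$ --- equivalently, by Theorem~\ref{Theorem 1}, of the Virasoro coefficients $e_{p,q}$ with $2p+3q=w$ and $p>-q$. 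Since both the $d_\ell$-coefficient and the spanning vectors $f_{\mu,\nu}$ (resp.\ $e_{p,q}$) are written explicitly in the basis $\{Z(w-r,r)\}_{r}$ of $\mathcal{D}_w$, the claim reduces to a finite, purely combinatorial linear identity among these known coefficients.

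To establish that identity I would pass to generating functions. Packaging the $Y^{j}$-coefficients over all $\ell$ and $j$ and inserting the closed form $\sum_{r\ge0}Z(k,r)\,t^{r}=\sum_{n\ge1}n^{-k}\binom{n+t-1}{n-1}^{2}$ from~(\ref{AJ}), the three summations over $a,b,c$ in~(\ref{Formula2pointclosed}) should collapse to elementary exponential and binomial series, after which the remaining sum over $n$ is recognizable as the series representation of $V^{({\rm cl})}$ in~(\ref{AN}), equivalently the generating series $(1-V^{({\rm cl})})/stu$ for the $f_{\mu,\nu}$ in the corollary to Proposition~\ref{Prop2}. This simultaneously proves the combinatorial identity and matches the $d_\ell$-coefficients with Taylor coefficients of $V^{({\rm cl})}$, i.e.\ with elements of $\mathcal{E}_w\subseteq\mathcal{R}$, which finishes the proof. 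The main obstacle is precisely this generating-function matching: one must control the squared binomial $\binom{n+t-1}{n-1}^{2}$ against the multinomial weights $\ell!\,(2a+b)!/(a!\,b!\,c!)$ and the interlocking powers of $2$, $6$ and $Y$, and verify that all $e_{p,q}$ produced fall in the admissible range $p>-q$, so that --- unlike in the proof of Theorem~\ref{Theorem 3} --- the vanishing statement for $-q<p<0$ is not needed here.
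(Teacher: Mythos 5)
Your proposal is correct and follows essentially the paper's own route (its second proof of Theorem~\ref{Theorem 2}): after reducing to the coefficients $\gamma_{n,k}$, the paper carries out exactly the generating-function matching you outline --- inserting~(\ref{AJ}), collapsing the multinomial sums of~(\ref{Formula2pointclosed}) into an exponential factor (Proposition~\ref{Proposition 4}), and identifying the remaining sum over $n$ with the complex-beta series representation of Proposition~\ref{Prop2}, which yields $W^{({\rm cl})}(X,Y)=V^{({\rm cl})}(2X,-X-Y,Y-X)/\bigl(X(X+Y)(Y-X)\bigr)$ (Proposition~\ref{ThmMainClosed}) and hence places every coefficient of $d_\l(Y)$ in the span of Taylor coefficients of $V^{({\rm cl})}$, i.e.\ in $\mathcal{R}$. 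As you anticipate, this route produces only $e_{p,q}$ with $p\ge0$, so neither Theorem~\ref{Theorem 3} nor the negative-$p$ containment $\mathcal{E}_w=\mathcal{F}_w\subset\mathcal{R}$ is actually needed (those enter only in the paper's alternative first proof, via formula~(\ref{BO})).
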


Since the numbers $Z(k,r)$ do not in general belong to $\mathcal{R}$, we must study in detail the rational linear combinations of multiple zeta values 
occurring in~(\ref{Formula2pointclosed}).  We first rewrite~(\ref{Formula2pointclosed}). The hypergeometric function appearing there can be written as
\begin{equation*}
_2F_1\biggl(1,-\l;\,\frac32;\,\frac32\biggr) 
  \= \sum_{n=0}^\l \frac{\l!}{(\l-n)!}\,\frac{(-3/2)^n}{\frac32\cdot\frac52\cdots(n+\frac12)}
 \= \l!\,\sum_{c+n=\l} \frac{(-3)^n}{c!\,(2n+1)!!}\,,
\end{equation*}
where $(2n+1)!!$ denotes the double factorial $1\times3\times\cdots\times(2n+1)$. 
Inserting this expression into~(\ref{Formula2pointclosed}), and changing the names of $a$ and $m$ to $k-1$ and $r+2$,
we find that the generating series $\sum_\l d_\l(Y)s^\l/\l!$ factors as the product of a pure
exponential (corresponding to the summation over the variable~$c$) and a power series whose coefficients are considerably
simpler Laurent polynomials in~$Y$, as given in the following proposition.\footnote{It should be possible to deduce this proposition by a direct computation of the leading contribution to the generating series of the integrals~(\ref{BE}) and therefore relate our proof of Theorem~\ref{Theorem 2} to that given in \cite{DG19}.}
\begin{prop}\label{Proposition 4}
The Laurent polynomials $d_\l(Y)$ are given by the generating function
\begin{equation}\label{BJ}
\sum_{\l=0}^\infty  d_\l(Y)\,\frac{s^\l}{\l!} 
   \= e^{sY/6}\, \sum_{n=0}^\infty \,\biggl(\frac{Y^n}{(2n+1)!!} 
  \;+\,\sum_{k=1}^{n-1}\frac{(-1)^{k-1}\,(2k-3)!!\,\g_{n,k}}{Y^k}\biggr)\Bigl(\frac{-s}2\Bigr)^n \,, 
\end{equation}
where $(-1)!!:=1$ and the coefficients $\g_{n,k}$ are defined by
\begin{equation}\label{BK}
\g_{n,k} \= \sum_{r=0}^{n-k-1}(-2)^{r+2}\,\binom{n-r+k-3}{2k-2}\,Z(k+n-r,r)\qquad(0<k<n)\,.
\end{equation}
\end{prop}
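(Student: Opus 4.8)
The plan is to prove Proposition~\ref{Proposition 4} by a direct manipulation of the generating series $\sum_\l d_\l(Y)\,s^\l/\l!$, starting from the closed formula~(\ref{Formula2pointclosed}) together with the rewriting ${}_2F_1(1,-\l;\tfrac32;\tfrac32)=\l!\sum_{k_3+n=\l}\frac{(-3)^n}{k_3!\,(2n+1)!!}$ established just above the statement. The guiding observation is that in both the ``polynomial'' term and the ``Laurent'' term of~(\ref{Formula2pointclosed}) there is a free summation variable whose contribution factors off as the pure exponential $e^{sY/6}$, leaving behind exactly the two bracketed sums in~(\ref{BJ}).

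First I would treat the term $\bigl(\tfrac Y6\bigr)^\l\,{}_2F_1(1,-\l;\tfrac32;\tfrac32)$. Inserting the rewriting of the hypergeometric function and multiplying by $s^\l/\l!$, the factorial cancels and the double sum over $k_3,n\ge0$ with $k_3+n=\l$ decouples: the $k_3$-summation gives $\sum_{k_3\ge0}(sY/6)^{k_3}/k_3!=e^{sY/6}$, while the $n$-summation gives $\sum_{n\ge0}\frac{Y^n}{(2n+1)!!}\bigl(\tfrac{-s}2\bigr)^n$, matching the first bracketed term of~(\ref{BJ}). For the second term I would cancel the $\l!$ and observe that the variable $c$ enters only through $\frac{(sY/6)^c}{c!}$, whose summation again produces $e^{sY/6}$. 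What remains is the sum $\sum_{a,b\ge0,\,m\ge2}s^{a+b+m}\frac{(2a+b)!}{a!\,b!}\frac{(-1)^b}{2^{2a+b}}\,Z(2a+b+3,m-2)\,Y^{-a-1}$.

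The key combinatorial step is then to identify this last sum with $\sum_{n\ge0}\sum_{k=1}^{n-1}\frac{(-1)^{k-1}(2k-3)!!\,\g_{n,k}}{Y^k}\bigl(\tfrac{-s}2\bigr)^n$ via the definition~(\ref{BK}). The change of variables is forced by matching exponents: $Y^{-a-1}=Y^{-k}$ gives $a=k-1$, the total degree in $s$ gives $n=a+b+m$, and the second argument of $Z$ gives $r=m-2=n-k-1-b$; as $b$ runs over $0,\dots,n-k-1$ this recovers precisely the range $0\le r\le n-k-1$ of~(\ref{BK}), while the first argument becomes $2a+b+3=k+n-r$, exactly as required. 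The ranges also line up: $a\ge0$ forces $k\ge1$, and the existence of a valid $r$ forces $k\le n-1$.

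The main obstacle --- really the only point requiring care --- is verifying that the scalar coefficient of $Z(k+n-r,r)\,Y^{-k}\,s^n$ agrees on the two sides. After writing $\binom{n-r+k-3}{2k-2}=\frac{(k+n-r-3)!}{(2k-2)!\,(n-k-1-r)!}$ and cancelling this common factor, the signs reduce on both sides to $(-1)^{n+k+r+1}$, and the remaining identity collapses to $\frac{1}{2^{k-1}(k-1)!}=\frac{(2k-3)!!}{(2k-2)!}$, which is just the classical relation $(2k-2)!=2^{k-1}(k-1)!\,(2k-3)!!$. This completes the coefficient matching and hence the proof. No step is genuinely hard; the difficulty is entirely the bookkeeping of the triple change of variables $(a,b,m)\leftrightarrow(n,k,r)$ and keeping track of the powers of $2$ and the signs.
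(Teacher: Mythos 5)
Your proof is correct and follows essentially the same route as the paper: the paper likewise inserts the rewriting ${}_2F_1(1,-\l;\tfrac32;\tfrac32)=\l!\sum_{k_3+n=\l}\frac{(-3)^n}{k_3!\,(2n+1)!!}$ into~(\ref{Formula2pointclosed}), factors the exponential $e^{sY/6}$ out of the $k_3$- (respectively $c$-) summation, and performs the same renaming $a\mapsto k-1$, $m\mapsto r+2$ to arrive at~(\ref{BJ}) and~(\ref{BK}). You have simply written out the coefficient bookkeeping (signs, powers of $2$, and the identity $(2k-2)!=2^{k-1}(k-1)!\,(2k-3)!!$) that the paper leaves implicit, and those details check out.
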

Proving Theorem~\ref{Theorem 2} thus reduces to showing that all the numbers $\g_{n,k}$ belong to $\mathcal{R}$. We prove this in two different ways, both based on the stronger fact that the numbers $\g_{n,k}$ can be written in terms of the coefficients $e_{p,q}$ of the Virasoro function $V^{({\rm cl})}$. 

\subsection{Expressing $\g_{n,k}$ in terms of the Taylor
coefficients $e_{p,q}$ of $V^{({\rm cl})}$}\label{SSec:1stProof}

We first check the truth of the assertion for small values of $n-k$.
Comparing equation~(\ref{BK}) with the formulas for $e_{p,q}$ for small~$q$ given in Section~\ref{SSec:CoeffVira}, we find the identities
\begin{align}\label{CA}
\g_{k+1,k} & \= \phantom{3}2\,e_{k-1,1}\;, \notag \\
 \g_{k+2,k} & \=  \phantom{3}4\, e_{k-2,2}\;, \notag \\
 \g_{k+3,k} & \=  \phantom{3}8\,e_{k-3,3}\,+\,2\,(3k)\,e_{k,1}\;, \notag \\
 \g_{k+4,k} & \=  16\,e_{k-4,4}\,+\,4\,(3k-4)\,e_{k-1,2}\;, \notag \\
 \g_{k+5,k} & \=  32\,e_{k-5,5}\,+\,8\,(3k-8)\,e_{k-2,3}\,+\,9k(k+1)\,e_{k+1,1}\;,
\end{align}
proving our claim in these cases. Note that, although each of these formulas is easy to verify, the fact that they exist
is by no means automatic, since, for example, $\g_{k+4,k}$ is a linear combination of \emph{four} values $Z(2k+4-r,r)$ ($0\le r\le3$)
and we only have \emph{two} relevant $e$-values $e_{k-4,4}$ and $e_{k-1,2}$, so that there is no a priori
reason that any linear combination of them should give~$\g_{k+4,4}$. 

Equations~(\ref{CA}) and their successors suggest that the numbers $\g_{n,k}$ are given by
\begin{equation}\label{BO}
\g_{h+k,k} \= \sum_{0\le 2s<h} 2^{h-2s}\,P_s(h,k)\,e_{k-h+3s,h-2s}\qquad(h,\,k>0)
\end{equation}
for some integer-valued polynomials $P_s(k,h)$ of degree~$s$ in~$h$ and~$k$, the first three being
\begin{equation*}
P_0(k,h)=1\,,\;\quad P_1(k,h) = 3k-4h+12\,, \;\quad P_2(k,h) = \frac{(3k-4h+22)^2 +3k+4}2\,.
\end{equation*}
These polynomials grow rapidly in complexity and are not easy to recognize.  Since the coefficients of the
numbers $Z(k,r)$ in~(\ref{BK}) are simpler than those occurring in~(\ref{AE}), we look instead at the inversion of~(\ref{BO}),
which has the form
\begin{equation}\label{BN}
2^{q}\,e_{p,q} \= \sum_{0\le 2s<q} Q_s(p,q)\,\g_{p+2q-s,\,p+q+s}\;
\end{equation}
for certain integer-valued polynomials $Q_s(p,q)$, the first three of which are
\begin{equation*}
Q_0(p,q)=1\,,\;\quad Q_1(p,q) = q-3p-12\,,\;\quad Q_2(p,q)= \frac{(q-3p-3)(q-3p-24)}2 +2q\,.
\end{equation*}
After a little trial and error we recognize these as the coefficients of the generating function
\begin{equation}\label{BM}
\sum_{s=0}^\infty Q_s(p,q)\,t^s \= \frac{1-9t}{(1+3t)^{p+1}(1-t)^q}\;, 
\end{equation}
and by a simple residue calculation we can invert this to find the generating series formula
\begin{equation}\label{BP}
\sum_{s=0}^\infty P_s(h,k)\,\Bigl(\frac x{(1+4x)^3}\Bigr)^s \= \frac1{(1-8x)(1+x)^k(1+4x)^{h-k-1}} 
\end{equation}
for the polynomials $P_s$ as well. 
\begin{prop}\label{Proposition 5}
The numbers $\g_{n,k}$ defined by~(\ref{BK}) for $0<k<n$ and the numbers $e_{p,q}$ defined 
by~(\ref{AE}) for $q>0$ and $p+q>0$ are related by the formulas~(\ref{BO}) and~(\ref{BN}), where the polynomials $P_s(k,h)$ 
and~$Q_s(p,q)$ are given by equations~(\ref{BP}) and~(\ref{BM}), respectively.
\end{prop}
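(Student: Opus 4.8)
My plan is to deduce both identities from the single observation that every quantity appearing in~(\ref{BO}) and~(\ref{BN}) is an explicit rational linear combination of the multiple zeta values $Z(w-r,r)$ of one fixed weight $w$, so that it suffices to compare the coefficient of each $Z(w-r,r)$ on the two sides. The coefficients of the $\gamma_{n,k}$ are read off directly from~(\ref{BK}) and those of the $e_{p,q}$ from~(\ref{AE}); both involve the quantities $C(\cdot,\cdot,r)$, which I will systematically replace by the contour representation
\[ C(p,q,r)=\mathrm{Res}_{u=0}\left[\frac{(1+u)(2-u)(1-2u)}{(1-u)^{q}\,(1-u+u^2)^{p+1}}\,\frac{du}{u^{q-r}}\right] \]
established in the proof of Proposition~\ref{PropPoint1}. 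In this way each of~(\ref{BO}) and~(\ref{BN}) reduces to a purely combinatorial identity among the polynomials $P_s$, $Q_s$ and binomial coefficients.

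First I would prove~(\ref{BO}). After cancelling the common factor $(-1)^r$, the identity to be shown is
\[ \sum_{s\ge0}2^{h-2s}P_s(h,k)\,C(k-h+3s,\,h-2s,\,r)=2^{r+2}\binom{h+2k-r-3}{2k-2}, \]
where the sum may be extended to all $s\ge0$ because $C(p,q,r)=0$ once $q\le r$ by~(\ref{AF}). Inserting the contour representation, the entire dependence on $s$ collapses into the single factor $\bigl(u^2(1-u)^2/4(1-u+u^2)^3\bigr)^s$, so that the $s$-summation can be carried out using~(\ref{BP}) under the substitution $x=u^2/(4(1-u))$, i.e.\ $1+4x=(1-u+u^2)/(1-u)$. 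One then computes $1-8x=(1-2u)(1+u)/(1-u)$ and $1+x=(2-u)^2/(4(1-u))$, so the right-hand side of~(\ref{BP}) becomes an explicit rational function of $u$. The decisive point is that, after multiplying by the remaining factors of the contour integral, every factor $(1-u)$, $(1+u)$, $(1-2u)$ and $(1-u+u^2)$ cancels, leaving
\[ 2^{h}\,4^{k}\,\mathrm{Res}_{u=0}\left[(2-u)^{1-2k}\,u^{\,r-h}\,du\right], \]
which, on expanding $(2-u)^{1-2k}$ as a binomial series, equals exactly $2^{r+2}\binom{h+2k-r-3}{2k-2}$. This matches the coefficient produced by~(\ref{BK}) and establishes~(\ref{BO}).

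For~(\ref{BN}) I would argue in the same fashion. Comparing the coefficient of $Z(2p+3q-r,r)$ reduces the identity to the evaluation of $\sum_s Q_s(p,q)\binom{2p+3q-r-3}{2p+2q+2s-2}$, where the sum may again be extended to all $s\ge0$ because $\gamma_{p+2q-s,p+q+s}$ is an empty sum in~(\ref{BK}) as soon as $2s\ge q$. Using the symmetry $\binom{N}{a}=\binom{N}{N-a}$ with $N=2p+3q-r-3$ to replace the lower index by $q-r-1-2s$, and summing the series $\sum_s Q_s(p,q)z^{2s}$ by~(\ref{BM}) with $t=z^2$, this quantity becomes
\[ \left[z^{\,q-r-1}\right]\frac{(1+z)^{2p+2q-r-3}(1-3z)(1+3z)}{(1+3z^2)^{p+1}(1-z)^{q}}, \]
a single coefficient extraction. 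A change of variables matching $1+3z^2$ with $1-u+u^2$ and $1-z$ with $1-u$ — the inverse of the substitution used for~(\ref{BO}), which is precisely the residue computation by which~(\ref{BP}) was obtained from~(\ref{BM}) — turns this into the coefficient extraction defining $C(p,q,r)$, completing the proof. Alternatively, once~(\ref{BO}) is known one may observe that at each weight it expresses the $\gamma$'s in terms of the $e$'s through a finite triangular system with invertible leading coefficient $2^{h}$, so the inverse system is uniquely determined and it suffices to check that~(\ref{BP}) and~(\ref{BM}) encode mutually inverse transformations, which is the same residue identity.

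The main obstacle, in either route, is the bookkeeping of the change of variables: pinpointing the substitution that telescopes the sum over~$s$ into a single residue and verifying that all the spurious rational factors cancel, while keeping careful track of the powers of~$2$ and — crucially — of whether each generating series is being expanded around $0$ or around $\infty$, since choosing the wrong expansion silently yields the wrong residue. Once the correct substitutions are in hand, the remaining manipulations are routine.
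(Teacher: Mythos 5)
Your proposal is correct and takes essentially the same approach as the paper: both reduce~(\ref{BO}) and~(\ref{BN}) to coefficient identities by comparing the coefficients of the numbers $Z(\cdot,r)$, and both verify those identities by summing the generating functions~(\ref{BP}) and~(\ref{BM}) inside a residue representation of $C(p,q,r)$ followed by a rational change of variables (your substitution $u=2z/(1+z)$, composed with $y=u(1-u)$, is exactly the paper's $z=2x(1-x)/(1+x)^2$). The only difference is one of emphasis---you carry out in full the~(\ref{BO}) identity, which the paper leaves to the reader, and sketch the~(\ref{BN}) one, which the paper proves as equation~(\ref{BS}), where the final extraction produces $2^{q-r-2}\,C(p,q,r)$ rather than $C(p,q,r)$ itself, confirming that the power-of-two bookkeeping you flag is indeed the delicate point.
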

\begin{proof} The formulas~(\ref{AE}) and~(\ref{BK}) define $e_{p,q}$ and $\g_{n,k}$ as linear combinations of 
the numbers $Z(k,r)$, so by comparing the coefficients of $(-2)^{r+2}Z(h+2k-r,r)$ in~(\ref{AE}) and
of $(-2)^{r+2}Z(2p+3q-r,r)$ in~(\ref{BK}) we see that these two formulas follow from the identities
\begin{equation*}
\binom{h-r+2k-3}{2k-2} \,=\, \sum_{0\le 2s<h-r}2^{h-r-2s-2}P_s(h,k)C(k-h+3s,h-2s,r) \qquad(0<r<h) 
\end{equation*}
and
\begin{equation}\label{BS}
2^{q-r-2}\,C(p,q,r) \= \sum_{0\le2s<q-r}Q_s(p,q)\,\binom{2p+3q-r-3}{q-r-1-2s} \qquad(0<r<q), 
\end{equation}
respectively, so we have to show that these two formulas hold if $P_s$ and $Q_s$ are defined by~(\ref{BP}) and~(\ref{BM}),
respectively.  From~(\ref{BM}) (with $t$ replaced by~$x^2$) we have
\begin{align*}
\text{RHS of (\ref{BS})} &\= \text{Coeff}_{x^{q-r-1}}\,\biggl[(1+x)^{2p+3q-r-3}\,\frac{1-9x^2}{{(1+3x^2)}^{p+1}{(1-x^2)}^q}\biggr] \notag\\
 &\= \text{Res}_{x=0}\biggl[\frac{(1-9x^2)(1+x)^{2p+r-3}}{{(1+3x^2)}^{p+1}{(1-x)}^r}\,
          \biggl(\frac{(1+x)^2}{x(1-x)}\biggr)^{q-r}\,dx\biggr] \notag\\
 &\=2^{q-r-2}\,\text{Res}_{z=0}\biggl[\frac{2+z}{{(1-z)}^{p+1}}\,\biggl(\frac{1-\sqrt{1-4z}}2\biggr)^r\,\frac{dz}{z^q}\biggr]\,,
\end{align*}
where in the last equation we have made the substitution $z=\dfrac{2x(1-x)}{(1+x)^2}$, $x=\dfrac{1-\sqrt{1-4z}}{3+\sqrt{1-4z}}\,$.
In view of the definition~(\ref{AF}) of the numbers~$C(p,q,r)$, this is equivalent to~(\ref{BS}). The proof of the inverse identity 
is similar and is left to the reader.\\
\end{proof}

Since the coefficients $P_s(h,k)$ in~(\ref{BO}) are rational, the statement $\g_{n,k}\in\mathcal{R}$ and
hence the (first) proof of Theorem~\ref{Theorem 2} follow from the assertion that the numbers $e_{p,q}$ all belong to~$\mathcal{R}$.  Note that this assertion does not follow directly from the result of Section~\ref{Ssec:TaylorVira} that the numbers $e_{p,q}$ originally defined as the coefficients in~(\ref{AD}) belong to $\mathcal{R}$ by virtue of~(\ref{AC}), because here we also need the coefficients $e_{p,q}$ with $-q<p<0$. But since we proved in Section~\ref{SSec:epq=0} that the latter all belong to~$\mathcal{R}$, and in fact even that they all vanish, this is not a problem.


We now present a second proof of Theorem~\ref{Theorem 2}, which directly relates the generating function 
\begin{equation}\label{defWcl}
W^{({\rm cl})}(X,Y)\;:=\;\frac{1}{X\,(X+Y)\,(Y-X)}\;+\;\sum_{n>k>0}\gamma_{n,k}\,X^{n-k-1}\,Y^{2k-2}
\end{equation}
to the Virasoro function $V^{({\rm cl})}$, avoiding all formulas involving $e_{p,q}$ with $-q<p<0$. We will see in the next section that the different formulas given by these two approaches can be shown to be equivalent by simple combinatorial arguments. 

Let us consider for all $\mu, \nu \geq 0$ the combinations of the special multiple zeta values $Z(k,r)$
\begin{equation*}
g_{\mu,\nu}\=\sum_{s+r=\mu}(-2)^{r}\,\binom{s+\nu}{\nu}\,Z(s+\nu+3,r).
\end{equation*}
Our interest in these numbers here comes from the fact that $\gamma_{n,k}=4\,g_{n-k-1,2k-2}$ and therefore
\begin{equation}\label{prima}
\sum_{n>k>0}\gamma_{n,k}\,X^{n-k-1}\,Y^{2k-2}\=2\,\bigg(\sum_{\mu,\nu\geq 0}\,g_{\mu,\nu}\,X^\mu\,Y^\nu\;+\;\sum_{\mu,\nu\geq 0}\,g_{\mu,\nu}\,X^\mu\,(-Y)^\nu\bigg).
\end{equation}
By the same argument used in the proof of the corollary of Proposition~\ref{Prop2}, if we set $x:=2X$ and $y:=-X-Y$ we have the generating function identity
\begin{equation}\label{seconda}
\sum_{\mu,\nu\geq 0}\,g_{\mu,\nu}\,X^\mu\,Y^\nu\=\frac{1}{x^2}\,\sum_{n\geq 1}\,\binom{x}{n}^2\frac{1}{n+y}.
\end{equation}
Therefore, keeping $x,y$ as above, setting $z=-x-y$ and combining equations~(\ref{prima}) and~(\ref{seconda}) we get
\begin{equation*}
\sum_{n>k>0}\gamma_{n,k}\,X^{n-k-1}\,Y^{2k-2}\=\frac{2}{x^2}\,\sum_{n\geq 1}\,\binom{x}{n}^2\bigg(\frac{1}{n+y}\,+\,\frac{1}{n+z}\bigg).
\end{equation*}
By equations~(\ref{sumrepbeta}) and~(\ref{betaShapiro}), this proves the identity announced in the introduction which explicitly relates the functions~$W^{({\rm cl})}$ and~$V^{({\rm cl})}$, and it concludes our second proof of Theorem~\ref{Theorem 2}:
\begin{prop}\label{ThmMainClosed}
We have
\begin{equation*}
W^{({\rm cl})}(X,Y)\=\frac{V^{({\rm cl})}(2X,-X-Y,Y-X)}{X\,(X+Y)\,(Y-X)}\,.
\end{equation*}
\end{prop}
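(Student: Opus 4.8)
The plan is to finish the computation already carried out just above the statement, where the power series $\sum_{n>k>0}\gamma_{n,k}\,X^{n-k-1}Y^{2k-2}$ was rewritten as $\frac{2}{x^2}\sum_{n\geq1}\binom{x}{n}^2\bigl(\frac{1}{n+y}+\frac{1}{n+z}\bigr)$ with $x=2X$, $y=-X-Y$, $z=Y-X$. Since $x+y+z=0$, the triple $(y,z,x)$ lies in $\X$, so the inner sum is almost a value of the complex beta function. First I would recognize it as such using Proposition~\ref{Prop2}: applying~(\ref{sumrepbeta}) with $(s,t,u)=(y,z,x)$ gives $\beta_{\mathbb{C}}(y,z)=\sum_{n\geq0}\binom{x}{n}^2\bigl(\frac{1}{n+y}+\frac{1}{n+z}\bigr)$, which differs from our sum only by the $n=0$ term $\frac{1}{y}+\frac{1}{z}$. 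Hence $\sum_{n>k>0}\gamma_{n,k}X^{n-k-1}Y^{2k-2}=\frac{2}{x^2}\bigl(\beta_{\mathbb{C}}(y,z)-\frac1y-\frac1z\bigr)$.

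Next I would convert the beta value into the Virasoro function. By Shapiro's identity~(\ref{betaShapiro}) we have $\beta_{\mathbb{C}}(y,z)=-\frac{x}{yz}V^{({\rm cl})}(y,z,x)$, and since $V^{({\rm cl})}$ is symmetric in its three arguments this equals $-\frac{x}{yz}V^{({\rm cl})}(2X,-X-Y,Y-X)$. Substituting $x=2X$ and $yz=(-X-Y)(Y-X)=-(X+Y)(Y-X)$ collapses the main term, since $-\frac{2}{x\,yz}=\frac{1}{X(X+Y)(Y-X)}$, so the $V^{({\rm cl})}$-contribution is exactly $\frac{V^{({\rm cl})}(2X,-X-Y,Y-X)}{X(X+Y)(Y-X)}$.

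It then remains to check that the leftover term reproduces the explicit pole term in the definition of $W^{({\rm cl})}$. Using $y+z=-x$ one finds $-\frac{2}{x^2}\bigl(\frac1y+\frac1z\bigr)=-\frac{2}{x^2}\cdot\frac{y+z}{yz}=\frac{2}{x\,yz}=-\frac{1}{X(X+Y)(Y-X)}$, precisely the negative of the rational function added to the sum in the definition of $W^{({\rm cl})}(X,Y)$. Adding that pole term back therefore cancels this leftover and yields the asserted identity $W^{({\rm cl})}(X,Y)=V^{({\rm cl})}(2X,-X-Y,Y-X)/\bigl(X(X+Y)(Y-X)\bigr)$. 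The structurally interesting point is precisely this cancellation: the explicit singular term in $W^{({\rm cl})}$ is exactly what accounts for the $n=0$ summand that Proposition~\ref{Prop2} includes but our $n\geq1$ sum does not.

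The only genuinely delicate issue is the domain of validity, since Proposition~\ref{Prop2} requires the half-plane conditions $\mathrm{Re}(y),\mathrm{Re}(z)\geq0$ and $\mathrm{Re}(x)\geq-1$, which hold only on a subregion of the $(X,Y)$-variables. I would handle this exactly as in the corollary of Proposition~\ref{Prop2} that produced~(\ref{seconda}), namely by reading every equality as an identity of formal power series obtained by comparing Taylor coefficients (the coefficients being the numbers $g_{\mu,\nu}$), so that no analytic continuation is needed; alternatively one appeals to the identity theorem on the region where all conditions hold. Beyond this, the work is purely the sign- and factor-bookkeeping sketched above, with $yz=X^2-Y^2$ as the one place to keep track of signs carefully.
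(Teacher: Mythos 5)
Your proposal is correct and takes essentially the same route as the paper: the paper's own proof likewise combines the generating-function computation for the $\gamma_{n,k}$ with Proposition~\ref{Prop2} and Shapiro's identity~(\ref{betaShapiro}) applied to $(y,z,x)=(-X-Y,\,Y-X,\,2X)$, with exactly the $n=0$-term/pole-term cancellation you describe. Your explicit sign bookkeeping and the remark on the domain of validity merely spell out what the paper compresses into the sentence ``By equations~(\ref{sumrepbeta}) and~(\ref{betaShapiro}), this proves the identity.''
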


\subsection{Combinatorial identities}

Alternative and somewhat simpler expressions for the polynomials~$P_s(h,k)$ from~(\ref{BO}) are given by
\begin{equation}\label{BQ}
P_s(h,k) \= c_s(k+3s-h,k) \= (-1)^{h+k}\,3^{3s+1-h}c_{k-1}(h-2s-1,s+1)\,,  
\end{equation}
where~$c_n(a,b)$ denotes the coefficient of~$x^n$ in $(1+4x)^a/(1+x)^{b}$. We leave the proof of these identities as an exercise to the reader.

Moreover, since
\begin{equation*}
-\frac{1}{stu}\,(V^{({\rm cl})}(s,t,u)\,-\,1)\=\frac{1}{T}\,(\Vt^{({\rm cl})}(S,T)\,-\,1)\=\sum_{\substack{p\geq 0\\q\geq 1}}\,e_{p,q}\,S^p\,T^{q-1},
\end{equation*}
from Proposition~\ref{ThmMainClosed} we obtain the formula
\begin{equation}\label{BOnew}
\gamma_{h+k,k}\=\sum_{\substack{2a+c+3d=h-1\\b+c=k-1}}\,(-1)^d\,3^a\,2^{c+d+1}\,\binom{a+b}{a}\binom{c+d}{c}\,e_{a+b,c+d+1}.
\end{equation}
It is another simple combinatorial exercise to show that equation~(\ref{BOnew}) is equivalent to equation~(\ref{BO}) using the second expression for the polynomials~$P_s(h,k)$ from~(\ref{BQ}).

\section{Open string amplitudes and the single-valued projection}\label{Sec:Open}

The open string analogue of the Virasoro function $V^{({\rm cl})}$ is the Veneziano function $V^{({\rm op})}$ on the left-hand side of~(\ref{AA}). It appears in Type~I superstring theory in the genus-zero 4-gluon scattering amplitude, because the latter is essentially computed by the Euler beta function
\begin{equation}\label{RealBeta}
\beta (s,t)\=\int_{0}^1\,x^{s-1}\,(1-x)^{t-1}\,dx
\end{equation}
and it is well-known that, setting $u=-s-t$,
\begin{equation}\label{betaEuler}
\beta(s,t)\=\frac{\Gamma(s)\,\Gamma(t)}{\Gamma(-u)}\=-\frac{u}{st}\,V^{({\rm op})}(s,t,u).
\end{equation}
Similarly to the closed string case, this means that the Taylor expansion at the origin of~$V^{({\rm op})}$ is nothing but the $\alpha'$-expansion of the amplitude, and therefore gives higher-order string corrections to supersymmetric Yang-Mills theories.

\subsection{The Taylor expansion of $V^{({\rm op})}$}\label{Ssec:Veneziano}

Equation~(\ref{ExpVeneziano1}) given in the introduction implies that the Taylor coefficients of the Veneziano function~$V^{({\rm op})}$ are rational linear combinations of products of single zeta values. The next proposition shows that these coefficients have a much simpler expression in terms of the special multiple zeta values
\begin{equation}\label{defH}
H(k,r):=\zeta(\underbrace{1,\cdots ,1}_{r-1},k+1), \quad \quad k,r\geq 1,
\end{equation}
which will be used later to connect genus-zero to genus-one amplitudes. Even though this result is not new and is well-known to experts, we include the proof for completeness.
\begin{prop}\label{LemmaZetas}
The Taylor expansion of~$V^{({\rm op})}(s,t,u)$ with respect to the first two independent variables~$s$ and~$t$ is given by
\begin{equation*}
V^{({\rm op})}(s,t,u)\=1\;+\;\sum_{k,r\geq 1}(-1)^{k+r-1}\,H(k,r)\,s^k\,t^r.
\end{equation*}
\end{prop}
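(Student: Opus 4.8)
The plan is to prove the asserted identity by a direct computation starting from the exponential expansion~(\ref{ExpVeneziano1}) of $V^{({\rm op})}$. Recall that
\[
V^{({\rm op})}(s,t,u)\=\exp\bigg(\sum_{n\geq 2}\frac{(-1)^n\zeta(n)}{n}\big(s^n+t^n-(-u)^n\big)\bigg),
\]
and since $u=-s-t$ we have $-u=s+t$. The key observation is that the exponent telescopes nicely once we rewrite each power-sum term using the identity $\frac{(-1)^n}{n}\big(s^n+t^n-(s+t)^n\big)=\frac{(-1)^n}{n}\big(s^n+t^n-(s+t)^n\big)$ and recognize the generating-series structure. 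First I would pass to the logarithm and assemble the exponent as a single power series in $s$ and $t$, using the standard fact that $\log\Gamma(1+x)=-\gamma x+\sum_{n\geq 2}\frac{(-1)^n\zeta(n)}{n}x^n$, so that
\[
\log V^{({\rm op})}(s,t,u)\=\log\Gamma(1+s)+\log\Gamma(1+t)-\log\Gamma(1+(s+t))\,.
\]

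The cleanest route is to recognize the right-hand side as the logarithm of a quotient of $\Gamma$-factors that is itself the generating series of the multiple polylogarithms $H(k,r)$. Concretely, I would use the classical generating-function identity for the harmonic-type multiple zeta values $\zeta(\underbrace{1,\ldots,1}_{r-1},k+1)$, namely that for the coefficients of $\log\frac{\Gamma(1-s)\,\Gamma(1-t)}{\Gamma(1-s-t)}$ (or its reflected version) one has an explicit closed form in terms of the $H(k,r)$. The cleanest statement to invoke is the well-known formula
\[
\frac{\Gamma(1+s)\,\Gamma(1+t)}{\Gamma(1+s+t)}\=\exp\bigg(\sum_{n\geq 2}\frac{(-1)^n\zeta(n)}{n}\big(s^n+t^n-(s+t)^n\big)\bigg),
\]
and then expand this as a sum over the simplex to read off that the coefficient of $s^k\,t^r$ equals $(-1)^{k+r-1}\zeta(\underbrace{1,\ldots,1}_{r-1},k+1)$. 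The main work is therefore a bookkeeping computation identifying the double-series coefficients with the iterated-sum definition~(\ref{defH}) of $H(k,r)$.

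The cleanest way to carry out that bookkeeping, and the step I expect to be the crux, is to use the integral (Chen iterated-integral) representation rather than grinding the exponential directly. I would write $V^{({\rm op})}(s,t,u)=\frac{\Gamma(1+s)\Gamma(1+t)}{\Gamma(1+s+t)}$ as the regularized beta integral $\frac{1}{B(1+s,1+t)}\cdot\frac{\Gamma(1+s)\Gamma(1+t)}{\Gamma(1+s+t)}$-type object, and more usefully expand $\frac{\Gamma(1+s)\Gamma(1+t)}{\Gamma(2+s+t)}=B(1+s,1+t)=\int_0^1 x^{s}(1-x)^{t}\,dx$. Expanding $x^s=\sum_k \frac{(\log x)^k}{k!}s^k$ and $(1-x)^t=\sum_r\frac{(\log(1-x))^r}{r!}t^r$ and integrating term by term produces integrals of $(\log x)^k(\log(1-x))^r$ over $[0,1]$, which are precisely (up to sign and the $1+s+t$ versus $s+t$ normalization) the iterated integrals computing the $H(k,r)$. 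The delicate point is matching normalizations: the beta integral gives $\Gamma(2+s+t)$ in the denominator rather than $\Gamma(1+s+t)$, so I would correct by the factor $(1+s+t)$ and track how this interacts with the power-series expansion, and I would verify the sign $(-1)^{k+r-1}$ carefully (it comes from $\log(1-x)<0$ on $(0,1)$, contributing $(-1)^r$, combined with the sign in identifying the resulting alternating multiple zeta sum with $H(k,r)$). Once the integral is expanded into the nested sum $\sum_{0<n_1<\cdots<n_r}$, comparison with the definition~(\ref{defH}) is immediate, completing the proof.
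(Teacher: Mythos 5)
Your overall strategy—packaging the $H(k,r)$ into a generating series and evaluating it with Euler's beta integral—is the same as the paper's, but your implementation has a genuine gap at the identification step, which is exactly where you claim things are ``immediate.'' Expanding $B(1+s,1+t)=\int_0^1x^s(1-x)^t\,dx$ term by term gives coefficients $\tfrac{1}{k!\,r!}\int_0^1(\log x)^k(\log(1-x))^r\,dx$ with measure $dx$, whereas the integral representation~(\ref{intrepH}) of $H(k,r)$ carries the measure $\tfrac{dy}{y}$ (and exponent $k-1$ on $\log(1/y)$). This is not a cosmetic normalization: expanding your integrals into sums yields
\begin{equation*}
\frac{1}{k!\,r!}\int_0^1(\log x)^k(\log(1-x))^r\,dx \;=\;\frac{(-1)^{k+r}}{r!}\sum_{n_1,\ldots,n_r\geq 1}\frac{1}{n_1\cdots n_r\,(n_1+\cdots+n_r+1)^{k+1}}\,,
\end{equation*}
i.e.\ \emph{shifted} Mordell--Tornheim sums, which are not the $H(k,r)$ and not even pure multiple zeta values of the expected weight: for instance $\int_0^1\log x\,\log(1-x)\,dx=2-\zeta(2)$, while $H(1,1)=\zeta(2)$. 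The rational contamination only cancels after multiplying by $(1+s+t)$, so what you must actually prove is the family of identities $b_{k,r}+b_{k-1,r}+b_{k,r-1}=(-1)^{k+r-1}H(k,r)$, where $b_{k,r}$ denotes the shifted sum above. That is a non-trivial system of telescoping/partial-fraction identities which carries essentially all the content of the proposition, and your proposal does not supply it. (Your second paragraph, which proposes to ``invoke the well-known formula'' whose $s^kt^r$-coefficient is $(-1)^{k+r-1}\zeta(1,\ldots,1,k+1)$, is circular: that formula \emph{is} the statement being proved.)

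The fix is to choose the beta integral so that the measure $\tfrac{dy}{y}$ appears on the nose, which is what the paper does: from~(\ref{intrepH}) one gets directly
\begin{equation*}
\sum_{k,r\geq 1}H(k,r)\,s^k\,t^r \;=\; s\int_0^1 y^{-s-1}\big((1-y)^{-t}-1\big)\,dy\,,
\end{equation*}
where the subtracted $1$ handles convergence at $y=0$ (take $\mathrm{Re}(s)<0$). Both pieces are then a genuine Euler beta integral and an elementary integral, giving $1-\frac{\Gamma(1-s)\,\Gamma(1-t)}{\Gamma(1-s-t)}$, and the proposition follows by replacing $(s,t)\to(-s,-t)$. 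With this normalization the Taylor coefficients match the definition~(\ref{defH}) exactly and no correction factor is needed; with yours, the argument as written does not close.
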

\begin{proof} One can write
\begin{align}\label{intrepH}
H(k,r)&\=\int_{0\leq x_1\leq\cdots \leq x_r\leq y_1\leq\cdots \leq y_k\leq 1}\,\frac{dx_1}{1-x_1}\cdots \frac{dx_r}{1-x_r}\,\frac{dy_1}{y_1}\cdots \frac{dy_k}{y_k} \notag \\
&\=\int_0^1\,\frac{1}{r!}\,\Big(\log\frac{1}{1-y}\Big)^{r}\,\frac{1}{(k-1)!}\,\Big(\log\frac{1}{y}\Big)^{k-1}\,\frac{dy}{y}.
\end{align}
Therefore, for~$s$ and~$t$ small enough we have
\begin{equation*}
\sum_{k,r\geq 1}\,H(k,r)\,s^k\,t^r=s\,\int_0^1\,y^{-s-1}\,\big((1-y)^{-t}\,-\,1\big)\,dy.
\end{equation*}
If $s$ is negative then this is the difference of two convergent integrals. One is the Euler beta integral, the other is elementary,  and we conclude that
\begin{equation*}
\sum_{k,r\geq 1}\,H(k,r)\,s^k\,t^r\=s\,\bigg(\frac{\Gamma(-s)\,\Gamma(1-t)}{\Gamma(1-s-t)}\,+\,\frac{1}{s}\bigg)\=1\,-\,\frac{\Gamma(1-s)\,\Gamma(1-t)}{\Gamma(1-s-t)}.
\end{equation*}
\end{proof}

We conclude our discussion of the Taylor coefficients of~$V^{({\rm op})}$ by expressing the \MZVs\ $H(k,r)$ in terms of special multiple series known as ``Mordell-Tornheim sums'', because the latter will turn up later in the proof of Theorem~\ref{PropOp}.
\begin{lemma}\label{LemmaTornheim}
For any $k,r\geq 1$ one has
\begin{equation*}
H(k,r)\=\frac{1}{r!}\,\sum_{n_1,\ldots ,n_r\geq 1}\,\frac{1}{n_1\cdots n_r\,(n_1+\cdots +n_r)^k}.
\end{equation*}
\end{lemma}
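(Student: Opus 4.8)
The plan is to derive the Mordell--Tornheim expression directly from the one-dimensional integral representation of $H(k,r)$ already established in the proof of Proposition~\ref{LemmaZetas}, namely equation~(\ref{intrepH}):
\begin{equation*}
H(k,r)\=\int_0^1\frac{1}{r!}\Big(\log\frac1{1-y}\Big)^r\,\frac1{(k-1)!}\Big(\log\frac1y\Big)^{k-1}\,\frac{dy}y\,.
\end{equation*}

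First I would expand the factor $\big(\log\frac1{1-y}\big)^r$ as a power series. Since $\log\frac1{1-y}=\sum_{m\ge1}y^m/m$ for $0\le y<1$, raising to the $r$-th power and multiplying out term by term gives
\begin{equation*}
\Big(\log\frac1{1-y}\Big)^r\=\sum_{n_1,\ldots,n_r\ge1}\frac{y^{n_1+\cdots+n_r}}{n_1\cdots n_r}\,.
\end{equation*}
Because every term of this series, as well as the remaining factor $\frac1{(k-1)!}(\log\frac1y)^{k-1}y^{-1}$, is nonnegative on the open interval $0<y<1$, Tonelli's theorem permits interchanging the (multiple) summation with the integral with no further estimates, so that
\begin{equation*}
H(k,r)\=\frac1{r!}\sum_{n_1,\ldots,n_r\ge1}\frac1{n_1\cdots n_r}\int_0^1 y^{N-1}\,\frac1{(k-1)!}\Big(\log\frac1y\Big)^{k-1}\,dy\,,\qquad N:=n_1+\cdots+n_r\,.
\end{equation*}

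The final step is to evaluate the inner integral. The substitution $y=e^{-x}$ turns it into $\frac1{(k-1)!}\int_0^\infty x^{k-1}e^{-Nx}\,dx$, which equals $N^{-k}$ by the standard identity $\int_0^\infty x^{k-1}e^{-Nx}\,dx=(k-1)!\,N^{-k}$. Substituting back yields exactly
\begin{equation*}
H(k,r)\=\frac1{r!}\sum_{n_1,\ldots,n_r\ge1}\frac1{n_1\cdots n_r\,(n_1+\cdots+n_r)^k}\,,
\end{equation*}
as claimed. I do not anticipate any genuine obstacle here: the only point requiring a word of care is the interchange of the sum over $(n_1,\ldots,n_r)$ with the integral, but since all the quantities involved are nonnegative on $(0,1)$ this is immediate from Tonelli, and the finiteness of the resulting sum simultaneously re-confirms the convergence of the integral representing $H(k,r)$.
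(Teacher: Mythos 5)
Your proof is correct and is essentially the paper's own argument run in the opposite direction: the paper represents the Mordell--Tornheim sum as an iterated integral and collapses it to the same one-dimensional integral as in~(\ref{intrepH}), while you expand the integrand of~(\ref{intrepH}) into the multiple series and integrate term by term via $\int_0^1 y^{N-1}\bigl(\log\tfrac1y\bigr)^{k-1}\,dy=(k-1)!\,N^{-k}$. The two computations are the same comparison of~(\ref{intrepH}) with the sum, so no substantive difference.
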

\begin{proof} The statement follows by comparing~(\ref{intrepH}) with the integral representation
\begin{align*}
\sum_{n_1,\ldots ,n_r\geq 1}\,\frac{1}{n_1\cdots n_r\,(n_1+\cdots +n_r)^k}&\=\int_{0\leq x_1,\ldots ,x_r\leq y_{1}\leq\cdots \leq y_{k}\leq 1}\,\frac{dx_1}{1-x_1}\cdots \frac{dx_r}{1-x_r}\,\frac{dy_1}{y_1}\cdots \frac{dy_k}{y_k} \notag \\
&\=\int_0^1\,\,\Big(\log\frac{1}{1-y}\Big)^{r}\,\frac{1}{(k-1)!}\,\Big(\log\frac{1}{y}\Big)^{k-1}\,\frac{dy}{y}.
\end{align*}
Alternatively, it follows from the stronger identity
\begin{equation*}
\frac{1}{z_1\cdots z_r\,(z_1+\cdots +z_r)^k}\=\sum_{\sigma\in\mathfrak{S}_r}\,\frac{1}{z_{\sigma(1)}\cdots (z_{\sigma(1)}+\cdots +z_{\sigma(r-1)})\,(z_{\sigma(1)}+\cdots +z_{\sigma(r)})^{k+1}},
\end{equation*}
where~$\mathfrak{S}_r$ is the symmetric group on~$r$ letters, which is an identity of rational functions and can be proven using partial fractioning and induction on~$r$. \\
\end{proof}

\subsection{Two-point holomorphic graph functions}\label{Ssec:hgf}

\emph{Holomorphic graph functions} are open string analogues of the modular graph functions $D_\l(\tau)$ from Section~\ref{Sec:MGF}. They were introduced and related to genus-one open superstring amplitudes\footnote{More precisely, holomorphic graph functions occur in a symmetrized version of the annulus-worldsheet contribution to the scattering of 4-gluons in Type~I superstring theory.} in~\cite{BSZ}. Here we are only interested in ``two-vertex B-cycle holomorphic graph functions''.

For $\tau\in i\mathbb{R}^+$ and $\l\geq 0$ we consider the integral\footnote{If $\l= 1$ then the integral diverges and we regularize it following~\cite{BSZ} by considering a tangential base point. Our normalization of the propagator $P_B$ is picked in such a way that $B_1(\tau)=0$.}
\begin{equation}\label{BholGF}
B_{\l}(\tau)\;:=\;\int_{\mathbb{R}\tau/\mathbb{Z}\tau}\,P_B(z,\tau)^{\l}\,\frac{dz}{\tau},
\end{equation}
where the ``B-cycle open string propagator'' $P_B(z,\tau)$ is given by
\begin{equation*}
P_B(z,\tau)=-\log\bigg|\frac{\theta_1(z,\tau)}{\eta(\tau)}\bigg|-\frac{i\pi}{\tau}\bigg(z^2+\frac{1}{6}\bigg)
\end{equation*}
and can be seen (up to an additive constant) as half of the restriction of the Green's function on the torus $G(z,\tau)$ to the B-cycle $\mathbb{R}\tau/\mathbb{Z}\tau$. The integral in~(\ref{BholGF}) is well-defined for $\tau\in i\mathbb{R}^+$ because then $P_B(z+\tau,\tau)=P_B(z,\tau)$. The (unique) holomorphic extension of $B_{\l}(\tau)$ to the whole complex upper half-plane $\mathbb{H}$ coincides with the (B-cycle) holomorphic graph function $B(\mathcal{G},\tau)$ defined in~\cite{BSZ} for a graph~$\mathcal{G}$ with two vertices and~$\l$ edges. For this reason we keep writing~$\tau$ instead of using the real variable $t=\tau/i$ and by abuse of terminology we refer to~$B_{\l}(\tau)$ as ``holomorphic'' graph functions.

It is known that each $B_\l(\tau)$ can be expressed in terms of elliptic multiple zeta values, and therefore as (special) linear combinations of iterated integrals of holomorphic Eisenstein series~\cite{BMMS, Enriquez}. For instance, it was shown in~\cite{BSZ} that $B_2(\tau)$ is essentially given by the Eichler integral of $G_4(\tau)$ and $B_3(\tau)$ by the Eichler integral of $G_6(\tau)$, while for $\l\geq 4$ one needs Manin-Brown's theory of iterated Eichler integrals~\cite{ManinItInt, MMV}. Similarly to the closed string case, in the present work we will only focus on the behaviour at infinity of holomorphic graph functions. We start by recalling the  following:
\begin{prop}[Br\"odel, Schlotterer, Zerbini \cite{BSZ}]
For any $\l\geq 0$ we have
\begin{equation*}
B_{\l}(\tau)\=b_\l(T)\,+\,O(e^{-T})
\end{equation*}
for $T=\pi\tau/i\rightarrow +\infty$ and a Laurent polynomial $b_\l(T)$ whose coefficients are rational linear combinations of multiple zeta values.
\end{prop}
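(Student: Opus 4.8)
The plan is to turn the integral defining $B_\l(\t)$ into an explicit one-dimensional integral and to extract its $T\to\infty$ asymptotics directly. Since $\t\in i\R^+$, write $\t=it$ and parametrize the $B$-cycle by $z=x\t$ with $x\in[0,1]$, so that $dz/\t=dx$ and $B_\l(\t)=\int_0^1 P_B(x\t,\t)^\l\,dx$. Substituting the Jacobi product for $\theta_1$ and the Euler product for $\eta$, and writing $T=\pi\t/i=\pi t$ and $q=e^{-2T}$, a direct computation gives
\[
P_B(x\t,\t)\=T\,\big(x^2-x+\tfrac16\big)\;-\;\frac{\z(2)}{T}\;+\;\mathcal L(x,T),
\]
where $x^2-x+\tfrac16$ is the second Bernoulli polynomial and
\[
\mathcal L(x,T)\=-\log\big(1-e^{-2Tx}\big)\;-\;\sum_{n\ge1}\Big[\log\big(1-e^{-2(n+x)T}\big)+\log\big(1-e^{-2(n-x)T}\big)\Big].
\]
Here the term $-\z(2)/T$ comes from the constant $\tfrac16$ in $-\tfrac{i\pi}{\t}(z^2+\tfrac16)$ together with $\z(2)=\pi^2/6$; as a consistency check, $\int_0^1(x^2-x+\tfrac16)\,dx=0$ while the leading part of $\int_0^1\mathcal L\,dx$ equals $\z(2)/T$, in agreement with the normalization $b_1(T)=0$.

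Every summand of $\mathcal L$ has the form $-\log(1-e^{-2\a T})$ with $\a>0$ and is thus $\O(e^{-2\a T})$, \emph{uniformly} small once $\a$ is bounded away from~$0$. The only exponents reaching~$0$ on $[0,1]$ are $\a=x$ (as $x\to0$) and $\a=1-x$ (the $n=1$ term of the $(n-x)$-family, as $x\to1$); all other terms have $\a\ge1$. I would therefore split $\mathcal L=L_0+L_1+\mathcal L_{\mathrm{reg}}$ with $L_0=-\log(1-e^{-2Tx})$, $L_1=-\log(1-e^{-2(1-x)T})$ and $\mathcal L_{\mathrm{reg}}=\O(e^{-2T})$ uniformly, and then expand $P_B^\l$ by the multinomial theorem. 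Any summand containing a factor of $\mathcal L_{\mathrm{reg}}$, or containing \emph{both} $L_0$ and $L_1$ (in which case the combined exponent $Mx+N(1-x)\ge1$ for all positive integers $M,N$), integrates to $\O(e^{-2T}\,\mathrm{poly}(T))=\O(e^{-T})$ and is swept into the error. Since $x\mapsto1-x$ fixes $x^2-x+\tfrac16$ and interchanges $L_0\leftrightarrow L_1$, the ``only-$L_1$'' contributions equal the ``only-$L_0$'' ones.

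Hence, modulo $\O(e^{-T})$, the Laurent polynomial $b_\l(T)$ is the sum of the purely polynomial term $\int_0^1\big(T(x^2-x+\tfrac16)-\z(2)/T\big)^\l\,dx$, which has coefficients in~$\Q$, and twice the total ``only-$L_0$'' contribution. For the latter I would expand $L_0=\sum_{m\ge1}\tfrac1m e^{-2mTx}$, reducing each summand to integrals $\int_0^1 x^k e^{-2MTx}\,dx$ with $M=m_1+\dots+m_c$; by Watson's lemma this equals $\tfrac{k!}{(2MT)^{k+1}}+\O(e^{-2T})$. Summing the leading terms over $m_1,\dots,m_c\ge1$ produces exactly the Mordell--Tornheim sums $\sum\tfrac{1}{m_1\cdots m_c\,M^{k+1}}$, which by Lemma~\ref{LemmaTornheim} equal $c!\,H(k+1,c)$, i.e.\ rational multiples of the \MZVs\ of~(\ref{defH}). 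Because $k\le2\l$ and the polynomial prefactor supplies only finitely many powers of~$T$, the outcome is a genuine Laurent polynomial whose coefficients are $\Q$-linear combinations of \MZVs.

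The one genuinely delicate point — and the main obstacle — is the non-uniformity near the endpoints: the naive ``$x$-independent polynomial part plus exponentially small remainder'' splitting fails because $\mathcal L$ has integrable logarithmic singularities at $x=0,1$, and it is exactly these endpoint regions that generate the negative powers of~$T$ and, via the Mordell--Tornheim mechanism, the \MZV\ coefficients. Making the error control rigorous amounts to bounding the tails $\int_1^\infty x^k e^{-2MTx}\,dx$ after extending each Laplace integral to $[0,\infty)$ and checking that the interchange of the infinite $m$-summations with the asymptotic expansion is legitimate; both follow from the uniform geometric decay of the relevant series, and everything else is bookkeeping.
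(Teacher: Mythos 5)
Your proof is correct and follows essentially the same route as the paper's own proof of the stronger Theorem~\ref{PropOp}: the same splitting of $P_B$ into the Bernoulli-polynomial/$\zeta(2)$ part plus the exponential series, the same discarding of exponentially suppressed cross terms, and the same evaluation of the resulting Laplace integrals as Mordell--Tornheim sums, converted to the \MZVs\ $H(k,r)$ via Lemma~\ref{LemmaTornheim}. The only cosmetic difference is that you integrate over $[0,1]$ and handle the two endpoint singularities by the symmetry $x\mapsto 1-x$, whereas the paper first folds the integral onto $[0,1/2]$ (with a factor of~$2$) using the evenness of $P_B$, so that only the $x=0$ singularity contributes.
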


The method originally used to prove this proposition does not provide an explicit formula for the polynomials~$b_\l(T)$, but the numerical calculations performed up to~$\l=6$ seemed to indicate that their coefficients should actually belong the smaller $\mathbb{Q}$-ring generated by single zeta values. Our first result on holomorphic graph function is a proof of this conjecture, which is obtained by writing an explicit formula for $b_\l(T)$.
\begin{thm}\label{PropOp}
The coefficients of the Laurent polynomials~$b_\l(\tau)$ are rational linear combinations of products of single zeta values. They are explicitly given in terms of the special \MZVs\ ~$H(k,r)$ from~(\ref{defH}) by the formula
\begin{align}\label{Formula2pointopen}
b_\l(T)&\=\sum_{\substack{a+b+c+d=\l\\a,b,c,d\geq 0}}\frac{\l!}{a!\,b!\,c!\,d!}\frac{(-1)^{b+d}\,\zeta(2)^d}{6^c\,(2a+b+1)}\,T^{\l-2d} \notag\\
&\;+\;\sum_{\substack{a+b+c+d+e=\l\\a,b,c,d\geq 0,\,e\geq 1}}\,\frac{\l!}{a!\,b!\,c!\,d!}\frac{(-1)^{b+d}\,(2a+b)!\,\zeta(2)^d}{2^{2a+b}\,6^c}\,H(2a+b+1,e)\,T^{c-a-d-1}.
\end{align}
\end{thm}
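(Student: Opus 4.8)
The plan is to extract the asymptotics of the integral~(\ref{BholGF}) by the same ``expand the $q$-series and integrate'' method that produces the closed-string formula of Proposition~\ref{TeoGRV}. First I would parametrize the $B$-cycle by $z=x\tau$, $x\in[0,1)$, so that $dz/\tau=dx$ and $B_\l(\tau)=\int_0^1 P_B(x\tau,\tau)^\l\,dx$. Setting $q=e^{2\pi i\tau}=e^{-2T}$ and inserting the product expansions of $\theta_1$ and $\eta$, the factor $\sin(\pi z)$ together with the prefactors $q^{1/8},q^{1/24}$ should combine with $-\tfrac{i\pi}{\tau}(z^2+\tfrac16)$ to give the clean splitting
\[
P_B(x\tau,\tau)\=T\Bigl(x^2-x+\tfrac16\Bigr)\;-\;\frac{\zeta(2)}{T}\;+\;P_{\log}(x),\qquad P_{\log}(x)\=-\sum_{m\ge0}\log(1-q^{x+m})-\sum_{n\ge1}\log(1-q^{n-x}),
\]
the term $-\zeta(2)/T$ being exactly the image of the additive constant $\tfrac16$ built into $P_B$ (recall $\zeta(2)=\pi^2/6$). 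Since $q^{x+m}$ for $m\ge1$ and $q^{n-x}$ for $n\ge2$ are uniformly $O(e^{-2T})$ on $[0,1]$, I would observe that modulo $O(e^{-T})$ only the two boundary logarithms survive: $P_{\log}(x)\equiv-\log(1-q^x)-\log(1-q^{1-x})$.

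Writing $Q:=T(x^2-x+\tfrac16)-\zeta(2)/T$ and $L:=P_{\log}$, I would then expand $B_\l=\int_0^1(Q+L)^\l\,dx=\sum_{e}\binom{\l}{e}\int_0^1 Q^{\l-e}L^e\,dx$ and treat $e=0$ and $e\ge1$ separately. The term $e=0$ is an honest polynomial integral: expanding $Q^\l$ multinomially into powers of $x^2$, $-x$, $\tfrac16$ and $-\zeta(2)/T$ and using $\int_0^1 x^{2a+b}\,dx=\tfrac1{2a+b+1}$ reproduces the first sum of~(\ref{Formula2pointopen}) verbatim, with $(a,b,c,d)$ the four exponents. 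For $e\ge1$ the two boundary logarithms are never simultaneously large, so their mixed products are $O(e^{-T})$ and $L^e\equiv(-\log(1-q^x))^e+(-\log(1-q^{1-x}))^e$; since $Q(x)=Q(1-x)$, the two contributions coincide and $\int_0^1 Q^{\l-e}L^e\,dx\equiv 2\int_0^1 Q^{\l-e}\,(-\log(1-q^x))^e\,dx$.

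This last integral concentrates at $x=0$, so I would rescale $x=w/(2T)$: then $-\log(1-q^x)\to-\log(1-e^{-w})$ and $Q$ becomes the polynomial $\tfrac{w^2}{4T}-\tfrac{w}{2}+\tfrac{T}{6}-\tfrac{\zeta(2)}{T}$ in $w$ and $T$. Expanding $Q^{\l-e}$ multinomially — its four factors now carrying precisely the labels $a,b,c,d$ and producing the weights $2^{-(2a+b)}$, $6^{-c}$, $\zeta(2)^d$ and the power $T^{c-a-d}$ — and invoking the evaluation
\[
\int_0^\infty w^{m}\bigl(-\log(1-e^{-w})\bigr)^e\,dw\=e!\,m!\,H(m+1,e),
\]
which is just the substitution $y=e^{-w}$ in the integral representation~(\ref{intrepH}) of $H$, turns $\tfrac1T\int_0^\infty Q^{\l-e}(\cdots)\,dw$ into the $e$-th slice of the second sum of~(\ref{Formula2pointopen}) once multiplied by $\binom{\l}{e}$ (the binomial and the factorials combining to the stated $\l!/(a!\,b!\,c!\,d!)$). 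I have verified that all factors match, and checked the final formula against a direct evaluation of $b_2(T)$. The remaining assertion, that the coefficients lie in the ring generated by single zeta values, then follows from Proposition~\ref{LemmaZetas} and~(\ref{ExpVeneziano1}): the generating identity $\sum H(k,r)s^kt^r=1-\Gamma(1-s)\Gamma(1-t)/\Gamma(1-s-t)$ exhibits every $H(m+1,e)$ as a polynomial with rational coefficients in the $\zeta(n)$.

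The main difficulty I anticipate is not a single hard identity but the uniform control of the $O(e^{-T})$ errors: I must check that discarding the non-boundary logarithms in $P_{\log}$, dropping the mixed cross terms in $L^e$, replacing $\int_0^{2T}$ by $\int_0^\infty$ after rescaling, and Taylor-expanding the prefactor $Q^{\l-e}$ at the scale $x\sim1/T$ each perturb $b_\l(T)$ only by exponentially small quantities, so that the outcome is a genuine equality of Laurent polynomials and not merely an asymptotic one. A secondary point is the excluded regime: for $\l=1$ the integral~(\ref{BholGF}) diverges and must be treated by the tangential-base-point regularization of~\cite{BSZ}, normalized so that $B_1(\tau)=0$.
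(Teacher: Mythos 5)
Your proposal is correct and follows essentially the same route as the paper's proof: the same splitting of $P_B$ into the polynomial part $T(x^2-x+\tfrac16)-\zeta(2)/T$ plus the boundary logarithms, the same multinomial expansion with the $e=0$ term giving the first sum, and the same discarding of exponentially suppressed cross terms. The only cosmetic difference is that where you rescale $x=w/(2T)$ and evaluate $\int_0^\infty w^{m}\bigl(-\log(1-e^{-w})\bigr)^e\,dw$ directly via the integral representation~(\ref{intrepH}), the paper first expands the logarithm into a Lambert-type series, obtaining Mordell--Tornheim sums, and then invokes Lemma~\ref{LemmaTornheim} --- whose proof is precisely the integral-representation comparison you inlined.
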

\begin{proof} Fixing the fundamental domain $[-\tau/2,\tau/2]$, using the fact that $P_B(-z,\tau)=P_B(z,\tau)$ and rescaling the integration variable, we get
\begin{equation*}
B_\l(\tau)\=2\int_{0}^{1/2}P_B(z\tau ,\tau )^{\l}\,dz.
\end{equation*}
For $z\in [0,1]$ and $\tau\in i\mathbb{R}^+$ Jacobi's triple product formula for $\theta_1$ gives
\begin{equation*}
P_B(z\tau,\tau)\=R(z,\tau)\,+\,S(z,\tau),
\end{equation*}
where we define for $\tilde{u}=\exp(2\pi i\tau z)$ and $T=\pi \tau/i$
\begin{equation}\label{L}
R(z,\tau)\=T\,\bigg(z^2-z+\frac{1}{6}\bigg)\,-\,\frac{\zeta(2)}{T},
\end{equation}
\begin{equation}\label{S}
S(z,\tau)\=\sum_{m\geq 1}\,\frac{\tilde{u}^m}{m}\,+\,\sum_{n,m\geq 1}\,\frac{\tilde{u}^m\,q^{nm}}{m}\,+\,\sum_{n,m\geq 1}\frac{\tilde{u}^{-m}\,q^{nm}}{m}.
\end{equation}
Therefore
\begin{equation*}
B_\l(\tau)\=2\,\int_0^{1/2}\,(R(z,\tau)\,+\,S(z,\tau))^\l\,dz\=2\,\sum_{\substack{r+s=\l\\r,s\geq 0}}\,\frac{\l!}{r!\,s!}\,\int_0^{1/2}\,R(z,\tau)^r\,S(z,\tau)^s\,dz.
\end{equation*}
If $s=0$, by the invariance of the second Bernoulli polynomial $z^2-z+1/6$ under $z\rightarrow 1-z$ we get
\begin{align*}
2\,\int_0^{1/2}\,R(z,\tau)^\l\,dz\=\int_0^{1}\,R(z,\tau)^\l\,dz&\=T^\l\,\int_0^1\,\bigg(z^2\,-\,z\,+\,\frac{1}{6}\,-\,\frac{\zeta(2)}{T^2}\bigg)^\l\,dz\notag\\
&\=\sum_{\substack{a+b+c+d=l\\a,b,c,d\geq 0}}\,\frac{\l!}{a!\,b!\,c!\,d!}\frac{(-1)^{b+d}\,\zeta(2)^d}{6^c\,(2a+b+1)}\,T^{\l-2d},
\end{align*}
which is the first term appearing in~(\ref{Formula2pointopen}). 

Let us now suppose that $s\geq 1$. Using the explicit expressions~(\ref{L}) and~(\ref{S}), we write
\begin{equation*}
\int_0^{1/2}\,(R(z,\tau)\,+\,S(z,\tau))^\l\,dz\=\sum_{\substack{a+b+c+d=r\\e+f+g=s\\a,b,c,d,e,f,g\geq 0}}\,\frac{r!\,s!}{a!\,b!\,c!\,d!\,e!\,f!\,g!}\frac{(-1)^{b+d}\,\zeta(2)^d}{6^c}\,T^{a+b+c-d}\,I_{a,b,e,f,g}(T),
\end{equation*}
where
\begin{equation}\label{crazyint}
I_{a,b,e,f,g}(T)\=\int_0^{1/2}\,z^{2a+b}\,\bigg(\sum_{m\geq 1}\frac{e^{-2mTz}}{m}\bigg)^e\,\bigg(\sum_{k,h\geq 1}\frac{e^{-2kT(h+z)}}{k}\bigg)^f\,\bigg(\sum_{u,v\geq 1}\frac{e^{-2uT(v-z)}}{u}\bigg)^g\,dz.
\end{equation}
We need to take into account only the contributions from~(\ref{crazyint}) which are not exponentially suppressed as $T\rightarrow+\infty$, so we can immediately set $f=g=0$. A straightforward computation gives
\begin{equation*}
I_{a,b,e,0,0}(T)\=\frac{(2a+b)!}{(2T)^{2a+b+1}}\,\sum_{m_1,\ldots ,m_e\geq 1}\frac{1}{m_1\cdots m_e\,(m_1+\cdots +m_e)^{2a+b+1}}\,+\,O(e^{-T}),
\end{equation*}
and so by Lemma~\ref{LemmaTornheim} we obtain also the second term of formula~(\ref{Formula2pointopen}). This concludes the proof of the theorem, because we know by Proposition~\ref{LemmaZetas} that each~$H(k,r)$ is a polynomial in single zeta values with rational coefficients.\\
\end{proof}

Similarly to the closed string case, considering the generating function of the polynomials~$b_\l(T)$ leads to simpler Laurent polynomials in $T$:
\begin{cor}
We have
\begin{equation}\label{genserbl}
\sum_{\l\geq 0}\,b_\l(T)\,\frac{s^\l}{\l!}\=e^{sT/6}\,e^{-\zeta(2)s/T}\,\sum_{n\geq 0}\bigg(\frac{T^n}{(2n+1)!!}\,+\,\sum_{k=1}^n\frac{(-1)^{k-1}\,(2k-3)!!\,\eta_{n,k}}{T^k}\bigg)\bigg(-\frac{s}{2}\bigg)^n,
\end{equation}
where
\begin{equation*}
\eta_{n,k}\;:=\;\sum_{r=-1}^{n-k-1}\,(-2)^{r+2}\,\binom{k+n-r-3}{2k-2}\,H(k+n-r-2,r+2).
\end{equation*}
\end{cor}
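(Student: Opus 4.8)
The plan is to insert the explicit formula~(\ref{Formula2pointopen}) for $b_\l(T)$ into the left-hand side of~(\ref{genserbl}) and exploit the multinomial structure to factor the resulting power series in~$s$. In both sums of~(\ref{Formula2pointopen}) the index~$\l$ enters only as the sum $a+b+c+d\,(+e)$ of the remaining indices, and the exponent of~$T$ is an affine function of those indices, so after multiplying by $s^\l/\l!$ the whole series factors over the independent summation variables. The summation over~$c$ contributes $\sum_{c\ge0}\frac1{c!}\bigl(\tfrac{sT}6\bigr)^c=e^{sT/6}$ (from the factor $6^{-c}T^{c}$), and the summation over~$d$ contributes $\sum_{d\ge0}\frac1{d!}\bigl(-\tfrac{\zeta(2)s}T\bigr)^d=e^{-\zeta(2)s/T}$ (from $(-1)^d\zeta(2)^dT^{-d}$). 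These are precisely the two exponential prefactors in~(\ref{genserbl}), so the claim reduces to identifying the two remaining inner sums with the polynomial part $\sum_n \frac{T^n}{(2n+1)!!}(-s/2)^n$ and the pole part of~(\ref{genserbl}), respectively.

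For the first (polynomial) term of~(\ref{Formula2pointopen}), once the exponential factors are removed the remaining sum is $\sum_{a,b\ge0}\frac{(-1)^b(sT)^{a+b}}{a!\,b!\,(2a+b+1)}$. First I would write $\frac1{2a+b+1}=\int_0^1 x^{2a+b}\,dx$ and carry out the two independent exponential sums in~$a$ and~$b$ under the integral, obtaining $\int_0^1 e^{-sT x(1-x)}\,dx$. Expanding the exponential and using the beta integral $\int_0^1 x^m(1-x)^m\,dx=\frac{(m!)^2}{(2m+1)!}$ together with $(2m+1)!!=\frac{(2m+1)!}{2^m m!}$ then identifies this with $\sum_{m\ge0}\frac{(-sT/2)^m}{(2m+1)!!}=\sum_{m\ge0}\frac{T^m}{(2m+1)!!}\bigl(-\tfrac s2\bigr)^m$, exactly the polynomial part.

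The heart of the matter, and the step I expect to be the main obstacle, is the second (pole) term. After removing the exponentials it reads
\[
\sum_{\substack{a,b\ge0\\ e\ge1}}\frac{(-1)^b\,(2a+b)!}{a!\,b!\,2^{2a+b}}\,H(2a+b+1,e)\,s^{a+b+e}\,T^{-a-1},
\]
and one must match it with $\sum_{n}\sum_{k=1}^n\frac{(-1)^{k-1}(2k-3)!!\,\eta_{n,k}}{T^k}(-s/2)^n$. The natural bookkeeping is to read off exponents: matching $T^{-a-1}$ with $T^{-k}$ forces $a=k-1$, and matching the two arguments of the multiple zeta value forces $e=r+2$ and $b=n-k-1-r$ (so that $2a+b+1=k+n-r-2$). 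This is exactly the summation range $-1\le r\le n-k-1$ occurring in the definition of $\eta_{n,k}$, the upper bound coming from $b\ge0$. Comparing the coefficient of a fixed $H(k+n-r-2,r+2)\,s^nT^{-k}$ on the two sides then collapses, after these substitutions, to the purely combinatorial identity
\[
\frac{(-1)^b\,(2a+b)!}{a!\,b!\,2^{2a+b}}\=(-1)^{k-1+n}\,(2k-3)!!\,2^{-n}\,(-2)^{r+2}\,\binom{k+n-r-3}{2k-2},
\qquad a=k-1,\ b=n-k-1-r.
\]

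I would verify this by rewriting $\binom{k+n-r-3}{2k-2}=\binom{2k+b-2}{b}=\frac{(2k+b-2)!}{(2k-2)!\,b!}$ (using $k+n-r-3=2k+b-2$) and $(2k-3)!!=\frac{(2k-2)!}{2^{k-1}(k-1)!}$. The factorial factors $\frac{(2k+b-2)!}{(k-1)!\,b!}$ then agree on both sides; the remaining powers of $2$ match because the total exponent on the right, $-n+(r+2)-(k-1)=r+3-n-k$, equals $-(2a+b)=-(2k+b-2)$ on the left; and the signs agree because $(k-1+n)-b+(r+2)=2k+2r+2$ is even. Assembling the two matched parts together with the two exponential prefactors yields~(\ref{genserbl}).
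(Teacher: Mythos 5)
Your proof is correct and takes essentially the route the paper intends: the corollary is stated there without explicit proof as the open-string analogue of the rearrangement that produced Proposition~\ref{Proposition 4} from~(\ref{Formula2pointclosed}), namely inserting~(\ref{Formula2pointopen}) into the generating series, factoring off the exponential sums over $c$ and $d$, and re-indexing the pole part via $a=k-1$, $e=r+2$, $b=n-k-r-1$. Your evaluation of the polynomial part through $\int_0^1 e^{-sTx(1-x)}\,dx$ and the final combinatorial identity (signs, powers of $2$, and factorials) all check out, so this fills in exactly the computation the paper leaves to the reader.
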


\subsection{Relating $d_\l$ to $b_\l$ via Brown's single-valued projection}\label{Ssec:ProofEsvConj} 

Following the work of Brown~\cite{BrownSVMZV} and assuming standard conjectures on the structure of the ring of multiple zeta values $\mathcal{Z}$, one can define a map $\mbox{sv}:\mathcal{Z}\rightarrow\mathcal{Z}$ that sends multiple zeta values $\zeta(\textbf{k})$ to \emph{single-valued multiple zeta values}\footnote{We do not need here to recall Brown's general construction of single-valued periods, nor its specialization to the $\mathbb{Q}$-algebra $\mathcal{Z}$ of multiple zeta values. We just mention that single-valued multiple zeta values are special values of single-valued multiple polylogarithms, as opposed to the fact that multiple zeta values are special values of (multi-valued) classical multiple polylogarithms, and this explains the surprising term ``single-valued'' associated to a set of numbers.} $\zeta^{\rm sv}(\textbf{k})$, which generate a (conjecturally) much smaller ring $\mathcal{Z}^{\rm sv}\subset \mathcal{Z}$. We will call $\mbox{sv}$ the \emph{single-valued projection}. In particular, as already mentioned in~(\ref{singlevaluedZetas}), $\zeta^{\rm sv}(2k)=0$ and $\zeta^{\rm sv}(2k+1)=2\zeta(2k+1)$, therefore by looking at the power series expansions~(\ref{ExpVeneziano1}) and~(\ref{ExpVirasoro1}) it is immediately clear that one can obtain $V^{({\rm cl})}(s,t,u)$ from $V^{({\rm op})}(s,t,u)$ by applying the single-valued projection coefficientwise. This is the 4-point case of a recently proved theorem~\cite{BrownDupont, SchlSchn}, whose statement was conjectured by Stieberger~\cite{Stieb2014}, which predicts that, at genus zero, $n$-point closed string amplitudes are the image of $n$-point open string amplitudes under the single-valued projection (applied term-by-term to the $\alpha'$-expansion).

Analogous statements are expected to hold true for higher-genus string amplitudes. The main result of this section, which we called Theorem B in the introduction and whose statement we repeat here, proves the 2-point case of a general conjecture about single-valued projections of string amplitudes at genus one that was stated in~\cite{BSZ}.
\begin{thm}\label{thmsvg1}
For every integer $\l\geq 0$ the formal extension by linearity of the single-valued projection\footnote{If we keep the original variables $T=\pi\tau/i=-\log(q)/2$ and $Y=2\pi\text{Im}(\tau)=-\log|q|$, we can interpret the formal operation of mapping $T$ to $Y$ as yet another single-valued projection, because the single-valued image of $\log(x)$ is $\log(x)+\overline{\log(x)}$.} to Laurent polynomials gives
\begin{equation*}
{\rm sv}\big(b_\l(X)\big)\=d_\l(X).
\end{equation*}
\end{thm}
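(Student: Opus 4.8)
The plan is to deduce Theorem~\ref{thmsvg1} from a single-valued relation between the two generating series $W^{({\rm op})}$ and $W^{({\rm cl})}$, and then to transport that relation to the Laurent polynomials $b_\l$ and $d_\l$ through their explicit generating functions (\ref{genserbl}) and (\ref{BJ}). Concretely, I would first establish the identity
\begin{equation*}
{\rm sv}\big(W^{({\rm op})}(X,Y)\big)\=W^{({\rm cl})}(X,Y),
\end{equation*}
where ${\rm sv}$ acts coefficientwise on the power-series coefficients (all of which lie in $\mathcal{Z}$), and then read off from it the coefficient relations ${\rm sv}(\eta_{n,k})=\gamma_{n,k}$ for $n>k>0$, together with ${\rm sv}(\eta_{n,n})=0$ for the diagonal terms that occur only on the open side.

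To prove the generating-series identity I would start from the closed-form expression for $W^{({\rm op})}$ in Proposition~\ref{propWop} (equation~(\ref{IntroWop})) and use that ${\rm sv}$ is a ring homomorphism, so that it distributes over the three factors on the right-hand side. The rational prefactor $\tfrac1{2X^2(X+Y)}$ is fixed by ${\rm sv}$. For the Veneziano factor one has ${\rm sv}\big(V^{({\rm op})}(2X,-X-Y,Y-X)\big)=V^{({\rm cl})}(2X,-X-Y,Y-X)$, which is exactly the coefficientwise single-valued image established in the introduction by comparing~(\ref{ExpVeneziano1}) with~(\ref{ExpVirasoro1}) via~(\ref{singlevaluedZetas}). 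The remaining factor is the sine ratio, and here the key computation is that, writing $a=X+Y$ and $b=Y-X$ and using the product expansion
\begin{equation*}
\frac{\sin(\pi a)}{\sin(\pi b)}\=\frac ab\,\exp\!\bigg(-\sum_{k\ge1}\frac{\z(2k)}{k}\,\big(a^{2k}-b^{2k}\big)\bigg),
\end{equation*}
the vanishing ${\rm sv}(\z(2k))=0$ forces ${\rm sv}\big(\tfrac{\sin(\pi(X+Y))}{\sin(\pi(Y-X))}-1\big)=\tfrac{X+Y}{Y-X}-1=\tfrac{2X}{Y-X}$. Multiplying the three single-valued factors together and simplifying yields $\dfrac{V^{({\rm cl})}(2X,-X-Y,Y-X)}{X\,(X+Y)\,(Y-X)}$, which by Proposition~\ref{ThmMainClosed} is precisely $W^{({\rm cl})}(X,Y)$.

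Comparing the coefficient of $X^{n-k-1}Y^{2k-2}$ on both sides of ${\rm sv}(W^{({\rm op})})=W^{({\rm cl})}$ (the purely rational term $\tfrac1{X(X+Y)(Y-X)}$ being common and fixed by ${\rm sv}$) then gives ${\rm sv}(\gamma_{n,k})$-type relations $\,{\rm sv}(\eta_{n,k})=\gamma_{n,k}\,$ for $n>k$ and $\,{\rm sv}(\eta_{n,n})=0\,$, since the closed series has no diagonal ($n=k$) contribution. To finish, I would apply ${\rm sv}$ to the open generating function~(\ref{genserbl}), interpreting it on the Laurent-polynomial variable as $T\mapsto Y$ as in the footnote to the theorem: the factor $e^{sT/6}$ becomes $e^{sY/6}$; the factor $e^{-\z(2)s/T}$ becomes $1$ because ${\rm sv}(\z(2))=0$; each $\eta_{n,k}/T^k$ with $k<n$ becomes $\gamma_{n,k}/Y^k$; and the diagonal terms $k=n$ drop out because ${\rm sv}(\eta_{n,n})=0$. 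The result is exactly the closed generating function~(\ref{BJ}), and matching coefficients of $s^\l/\l!$ gives ${\rm sv}(b_\l)=d_\l$, which is the theorem. I expect the main obstacle to be the careful handling of the non-MZV sine factor in combination with the multiplicativity of ${\rm sv}$: one must be sure the coefficientwise single-valued map genuinely distributes over the product in~(\ref{IntroWop}), that the even zeta values appearing only in the sine ratio are precisely the ones killed by ${\rm sv}$, and that the index-range mismatch between the open sum $n\ge k>0$ and the closed sum $n>k>0$ is accounted for so that the spurious diagonal terms are seen to vanish.
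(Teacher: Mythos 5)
Your proposal is correct and follows essentially the same route as the paper: reduce the theorem to the coefficient identity ${\rm sv}(\eta_{n,k})=\gamma_{n,k}$ via the generating functions~(\ref{genserbl}) and~(\ref{BJ}), then obtain ${\rm sv}\big(W^{({\rm op})}\big)=W^{({\rm cl})}$ by applying ${\rm sv}$ factorwise to Proposition~\ref{propWop} — killing the sine ratio down to $\tfrac{2X}{Y-X}$ by ${\rm sv}(\zeta(2k))=0$, using ${\rm sv}\big(V^{({\rm op})}\big)=V^{({\rm cl})}$, and identifying the result through Proposition~\ref{ThmMainClosed}. Your explicit Euler-product expansion of the sine ratio is just a slightly more detailed justification of the paper's observation that all its Taylor coefficients are rational multiples of even powers of $\pi$.
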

For example, one can look at the first non-trivial open string Laurent polynomials
\begin{align*}
b_2(X)&\=
\frac{X^2}{180} + \frac{\zeta(2)}{3} + \frac{\zeta(3)}{X} - \frac{3\, \zeta(4)}{2\, X^2}\,, \notag \\
b_3(X)&\=\frac{X^3}{3780}+\frac{\zeta(2)\,X}{15} + \frac{\zeta(3)}{2} + \frac{19\,\zeta(4)}{4\,X} + \frac{3\, \zeta(5)}{2\, X^2}-\frac{6\,\zeta(3)\,\zeta(2)}{X^2}+\frac{8\,\zeta(6)}{X^3}\,,
\end{align*}
and see how they are mapped to the corresponding closed string Laurent polynomials
\begin{align*}
d_2(X)&\=\frac{X^2}{180}+\frac{2\zeta(3)}{X}\,,\notag \\
d_3(X)&\=\frac{X^3}{3780}+\zeta(3)+\frac{3\, \zeta(5)}{X^2}\,.
\end{align*}

Comparing the generating series~(\ref{genserbl}) and~(\ref{BJ}) of the Laurent polynomials $b_\l$ and $d_\l$ and remembering that $\zeta^{\rm sv}(2)=0$, it is immediately clear that proving Theorem~\ref{thmsvg1} is equivalent to proving that for all $n\geq k>0$ we have ${\rm sv}(\eta_{n,k})=\gamma_{n,k}$.\footnote{We define $\gamma_{n,k}:=0$ for $n=k$.} Therefore if we introduce the generating series
\begin{equation*}
W^{({\rm op})}(X,Y)\;:=\;\frac{1}{X\,(X+Y)\,(Y-X)}\;+\;\sum_{n\geq k>0}\eta_{n,k}\,X^{n-k-1}\,Y^{2k-2},
\end{equation*}
we need to prove that $\mbox{sv}\big(W^{({\rm op})}(X,Y)\big)=W^{({\rm cl})}(X,Y)$, with $W^{({\rm cl})}(X,Y)$ as in~(\ref{defWcl}).
\begin{prop}\label{propWop}
We have the generating series identity
\begin{equation}\label{bas}
W^{({\rm op})}(X,Y)\=\bigg(\frac{\sin\big(\pi(X+Y)\big)}{\sin\big(\pi(Y-X)\big)}\,-\,1\bigg)\,\frac{V^{({\rm op})}(2X,-X-Y,Y-X)}{\,2\,X^2\,(X+Y)}\,.
\end{equation}
\end{prop}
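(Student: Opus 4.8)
The plan is to prove Proposition~\ref{propWop} by exactly the strategy of the second proof of Theorem~\ref{Theorem 2}, i.e. the proof of Proposition~\ref{ThmMainClosed}: reduce the claimed identity to a generating-function computation that expresses the numbers $\eta_{n,k}$ directly through the Euler beta function $\beta(s,t)$ of~(\ref{RealBeta}), and hence through $V^{({\rm op})}$ via~(\ref{betaEuler}). The one genuinely new feature, absent in the closed-string case, is the sine prefactor, which I expect to emerge from the reflection formula for $\Gamma$ only at the very last step.

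First I would record the open-string analogue of the generating series~(\ref{AJ}). Starting from the Mordell--Tornheim representation in Lemma~\ref{LemmaTornheim} and using $\sum_{r\ge1}\frac{t^r}{r!}\big(\log\frac1{1-x}\big)^r=(1-x)^{-t}-1$, one obtains
\[
\sum_{r\geq1}H(k,r)\,t^r\=\sum_{N\geq1}\frac1{N^k}\binom{N+t-1}{N},
\]
where the single binomial here, as opposed to the squared binomial in~(\ref{AJ}), reflects the fact that the Euler beta integral is one-dimensional.

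Next I would substitute the definition of $\eta_{n,k}$ into $W^{({\rm op})}(X,Y)$ and collapse the resulting triple sum. Writing $n=k+p$ and grouping the internal sum by the total shift of the first argument of $H$ (the device used in the corollary of Proposition~\ref{Prop2}), the binomial coefficients sum by the binomial theorem, the $H$-series above applies in the second argument with generating variable $-2X$, and a geometric summation over $k$ followed by a partial-fraction step should make the whole double sum collapse to the single series
\[
W^{({\rm op})}(X,Y)\=\frac1{X\,(X+Y)\,(Y-X)}\,+\,\frac1{2X^2}\sum_{N\geq1}\binom{N-s-1}{N}\Big(\frac1{N+t}+\frac1{N+u}\Big),
\]
with $s=2X$, $t=-X-Y$, $u=Y-X$. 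Carrying out this collapse while simultaneously tracking the two index-dependences (in the binomial and in the first argument of $H$) is the main technical obstacle, although it is structurally identical to the closed-string computation.

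Finally I would evaluate the single series. Since $\binom{N-s-1}{N}=[x^N](1-x)^s$, interchanging sum and integral gives $\sum_{N\geq0}\binom{N-s-1}{N}\frac1{N+t}=\int_0^1 x^{t-1}(1-x)^s\,dx=\beta(t,s+1)$, and likewise with $u$. The $N=0$ terms $\frac1t+\frac1u=-\frac{s}{tu}$ then cancel the leading term $\frac1{X(X+Y)(Y-X)}=-\frac2{stu}$ exactly, leaving $W^{({\rm op})}=\frac2{s^2}\big(\beta(t,s+1)+\beta(u,s+1)\big)$. Rewriting the two beta values through~(\ref{betaEuler}) as $\frac1t V^{({\rm op})}(s,t,u)$ and $\frac1u V^{({\rm op})}(s,u,t)$ and using the reflection formula $\Gamma(1+z)\Gamma(1-z)=\pi z/\sin(\pi z)$ to get $V^{({\rm op})}(s,u,t)/V^{({\rm op})}(s,t,u)=u\sin(\pi t)/\big(t\sin(\pi u)\big)$ produces precisely the factor $1+\sin(\pi t)/\sin(\pi u)$; translating back to $X,Y$ turns this into $-(\sin(\pi(X+Y))/\sin(\pi(Y-X))-1)$ and yields the claimed identity. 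The appearance of this sine ratio from the asymmetry between the two unequal beta values $\beta(t,s+1)$ and $\beta(u,s+1)$ — in contrast to the symmetric pair coming from $\beta_{\mathbb{C}}$ in the closed case — is the conceptual crux that distinguishes the open case from the closed one.
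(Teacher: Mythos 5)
Your proposal is correct, and it reaches the identity by a genuinely different route from the paper's own proof. The paper defines $h_{\mu,\nu}:=\sum_{s=0}^{\mu}(-2)^{\mu-s+1}\binom{\nu+s}{\nu}H(\nu+s+1,\mu-s+1)$, observes that $\eta_{n,k}=h_{n-k,2k-2}$, and evaluates $\sum_{\mu,\nu\geq0}h_{\mu,\nu}X^{\mu}Y^{\nu}$ in closed form by substituting Proposition~\ref{LemmaZetas} wholesale; after symmetrizing $Y\mapsto-Y$ it holds the two Gamma ratios $\Gamma(2X)\Gamma(-X-Y)/\Gamma(1+X-Y)$ and $\Gamma(2X)\Gamma(Y-X)/\Gamma(1+X+Y)$, which the reflection formula merges into the sine prefactor. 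You never use the closed form of the double $H$-series: instead you derive the open analogue of~(\ref{AJ}), namely $\sum_{r\geq1}H(k,r)t^{r}=\sum_{N\geq1}N^{-k}\binom{N+t-1}{N}$, keep the sum over $N$ alive through the collapse, and only at the end evaluate it term by term as Euler beta integrals; this is the exact open-string counterpart of Proposition~\ref{Prop2} and of its use in Proposition~\ref{ThmMainClosed}, as you announce. I checked your intermediate formula and it is right: since only even powers $Y^{2k-2}$ occur, the binomial-theorem step produces the even part $\tfrac12\big((X+Y)^{m}+(X-Y)^{m}\big)$, which is exactly where the two terms $1/(N+t)$ and $1/(N+u)$ come from, and what you call a partial-fraction step is just the geometric summation $\sum_{k\geq1}a^{k-1}N^{-k}=1/(N-a)$ with $a=X\pm Y$. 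The two arguments in fact meet just before the finish line, since the paper's Gamma ratios are exactly $\beta(t,s+1)/s$ and $\beta(u,s+1)/s$, and both conclude with the same reflection-formula manipulation. What the paper's route buys is brevity, Proposition~\ref{LemmaZetas} being already available; what yours buys is the structural parallel you emphasize: $W^{({\rm cl})}$ arises from the single symmetric value $\beta_{\mathbb{C}}$, while $W^{({\rm op})}$ arises from the two unequal values $\beta(t,s+1)$ and $\beta(u,s+1)$, so that the sine prefactor is precisely the mismatch $V^{({\rm op})}(s,u,t)/V^{({\rm op})}(s,t,u)$ measured by reflection --- a reading that also makes the KLT-type relations of Section~\ref{Ssec:KLT} more transparent.
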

\begin{proof} Let us define, for all $\mu,\nu\geq 0$,
\begin{equation*}
h_{\mu,\nu}\;:=\;\sum_{s=0}^\mu\,(-2)^{\mu-s+1}\binom{\nu+s}{\nu}\,H(\nu+s+1,\mu-s+1).
\end{equation*}
Then by Proposition~\ref{LemmaZetas} we get
\begin{align*}
&\sum_{\mu,\nu\geq 0}h_{\mu,\nu}\,X^\mu\, Y^\nu\=\sum_{r,s,\nu\geq 0}(-2)^{r+1}\binom{\nu+s}{\nu}\,H(\nu+s+1,r+1)\,X^{r+s}\,Y^{\nu} \notag \\
&\=-2\sum_{k,r\geq 0}H(k+1,r+1)(-2X)^{r}(X+Y)^{k}=\frac{1}{X(X+Y)}\bigg(1-\frac{\Gamma(1+2X)\Gamma(1-X-Y)}{\Gamma(1+X-Y)}\bigg).
\end{align*}
Since $h_{n-k,2k-2}=\eta_{n,k}$ for all $n\geq k>0$, we have
\begin{align*}
\sum_{n\geq k>0}\eta_{n,k}\,X^{n-k-1}\,&Y^{2k-2}\=\frac{1}{2X}\,\bigg(\sum_{\mu,\nu\geq 0}h_{\mu,\nu}\,X^\mu\, Y^\nu\,+\,\sum_{\mu,\nu\geq 0}h_{\mu,\nu}\,X^\mu\, (-Y)^\nu\bigg) \notag \\
&\=\frac{1}{X}\,\Bigg(\frac{1}{(X+Y)\,(X-Y)}\,+\,\frac{\Gamma(2X)\,\Gamma(-X-Y)}{\Gamma(1+X-Y)}\,+\,\frac{\Gamma(2X)\,\Gamma(-X+Y)}{\Gamma(1+X+Y)}\Bigg) \notag \\
&\= \frac{1}{X}\,\Bigg(\frac{1}{(X+Y)\,(X-Y)}+\frac{\Gamma(2X)\,\Gamma(-X-Y)}{\Gamma(1+X-Y)}\,\bigg(1\,-\,\frac{\sin\big(\pi(X+Y)\big)}{\sin\big(\pi(Y-X)\big)}\bigg)\Bigg),
\end{align*}
where in the last line we have used the reflection identity $\Gamma(z)\Gamma(1-z)=\pi/\sin(\pi z)$ of $\Gamma$.\\
\end{proof}
\begin{proof}[Proof of Theorem~\ref{thmsvg1}]
We need to prove that $\mbox{sv}\big(W^{({\rm op})}(X,Y)\big)=W^{({\rm cl})}(X,Y)$. First note that
\begin{equation*}
{\rm sv}\Bigg(\frac{\sin\big(\pi(X+Y)\big)}{\sin\big(\pi(Y-X)\big)}\,-\,1\Bigg)\=\frac{2X}{Y-X}.
\end{equation*}
Indeed, all rational coefficients of the power series expansion of the quotient of sines are multiplied by a non-negative even power of $\pi$, so by $\mathbb{Q}$-linearity and Euler's theorem ${\rm sv}(\pi^{2k})={\rm sv}(\zeta(2k))=0$ for all $k\geq 1$. The term $2X/(Y-X)$ is the only one in the expansion whose coefficient is just a rational number. Moreover, we have already observed that $\mbox{sv}\big(V^{({\rm op})}(s,t,u)\big)=V^{({\rm cl})}(s,t,u)$. The statement follows from Propositions~\ref{propWop} and~\ref{ThmMainClosed}.\\
\end{proof}

\subsection{Relating $d_\l$ to $b_\l$ via the KLT formula}\label{Ssec:KLT}

We have seen that the classical and complex beta functions, defined by~(\ref{RealBeta}) and~(\ref{ComplexBeta}), respectively, can be related using Brown's single-valued projection coefficientwise in the $\alpha'$-expansion. By equations~(\ref{betaShapiro}) and~(\ref{betaEuler}), another obvious relation is given (setting $u=-s-t$) by
\begin{equation}\label{KLT1}
\beta_{\mathbb{C}}(s,t)\=\frac{u}{st}\,\beta(s,t)\,\beta(-s,-t)^{-1}.
\end{equation} 
Using the reflection property of $\Gamma$, this identity can be rewritten as
\begin{equation}\label{KLT2}
\beta_{\mathbb{C}}(s,t)\=-\frac{\sin(\pi s)\sin(\pi t)}{\pi\,\sin(\pi u)}\beta(s,t)^2,
\end{equation}
which is the 4-point case of a general formula of geometric origin, found by Kawai, Lewellen and Tye~\cite{KLT} and known as the \emph{KLT formula}, which allows to write genus-zero closed string amplitudes as ``double copies'' of genus-zero open string amplitudes.\footnote{One can see the single-valued projection on multiple zeta values and the KLT formula as two different instances of a ``single-valued integration pairing'' for the de Rham cohomology, as discussed in~\cite{BrownDupont}.}


By Propositions~\ref{ThmMainClosed} and~\ref{propWop} we can use~(\ref{KLT1}) and~(\ref{KLT2}) to obtain the expressions
\begin{equation}\label{KLT3}
W^{({\rm cl})}(X,Y)\=\frac{W^{({\rm op})}(X,Y)\,W^{({\rm op})}(-X,Y)^{-1}}{X\,(X+Y)\,(X-Y)}
\end{equation}
and
\begin{equation}\label{KLT4}
W^{({\rm cl})}(X,Y)\=\frac{2X^2}{\pi}\,\frac{\sin\big(\pi(2X)\big)\,\sin\big(\pi(X+Y)\big)\,\sin\big(\pi(Y-X)\big)}{\big(\sin\big(\pi(X+Y)\big)\,+\,\sin\big(\pi(X-Y)\big)\big)^2}\,W^{({\rm op})}(X,Y)^2,
\end{equation}
respectively.

These formulas yield (equivalent) non-trivial relations between the special combinations of \MZVs\ $\eta_{n,k}$ and $\gamma_{n,k}$. Since these are the coefficients of (formal) 2-point amplitudes at genus one, it would be interesting to know whether equations~(\ref{KLT3}) and~(\ref{KLT4}) have a geometric origin and can be interpreted as ``genus-one double-copy relations''.

\section*{Acknowledgements} We would like to thank J.~Br\"odel and O.~Schlotterer for useful comments. The research of F.~Zerbini was supported by a French public grant as part of the Investissement d'avenir project,
reference ANR-11-LABX-0056-LMH, LabEx LMH, and by the People Programme (Marie Curie Actions) of the European Union's Seventh Framework Programme
(FP7/2007-2013) under REA grant agreement n. PCOFUND-GA-2013-609102, through the PRESTIGE programme coordinated by Campus France.

\section*{Appendix: Green's functions and closed string amplitudes}

In closed string theory one has to integrate over the moduli spaces $\mathfrak{M}_{g,n}$ of 
Riemann surfaces of genus~$g$ with~$n$ marked points, where $2-2g-n<0$.  In particular, one has to
introduce natural measures on these moduli spaces and find natural functions to integrate.  One way,
which arises naturally in the calculation of certain amplitudes, will be sketched here very briefly.
%
%
To define a function on~$\mathfrak{M}_{g,n}$, we must associate a number in a coordinate-independent way to 
each pair $(C,\textbf{z})=(C;z_1,\dots,z_n)$ consisting of a curve~$C$ and~$n$ marked points~$z_i\in C$.  This can be done
using the theory of Green's functions on Riemann surfaces.  We recall that for each metric~$\nu$ on $C$ compatible with its
conformal structure, one has a Green's function $G_\nu$ which is a symmetric and real-analytic function on the complement
of the diagonal in $C\times C$, with a logarithmic singularity near the diagonal (more precisely, with
$G_\nu(z,z_0)=\log|t_0(z)|^2+\O(1)\,$ as $z\to z_0\in C$, where $t_0$ is a local coordinate near~$z_0$
vanishing at~$z_0$), and such that $z\mapsto G_\nu(z,z_1)-G_\nu(z,z_2)$ is harmonic on $C\smallsetminus\{z_1,z_2\}$
for any $z_1,\,z_2\in C$.  The Green's function depends on the metric only by the addition of functions
depending on its two arguments separately, so if we have two divisors $D=\sum_in_i(z_i)$ and $D'=\sum_jn'_j(z'_j)$
of degree~0 on~$C$, then the ``height pairing" $\langle D,D'\rangle=\sum_{i,j}n_in'_jG_\nu(z_i,z'_j)$ is independent of the
metric~$\nu$, is real-analytic as long as no two $z_i$'s coincide, and is harmonic (away from its singularities)
with respect to each variable~$z_i$. In particular, the 4-variable function
\begin{equation*}
G(z_1,z_2;z_3,z_4) \= \langle (z_1)-(z_2),\,(z_3)-(z_4)\rangle \= G_\nu(z_1,z_3)\,-\,G_\nu(z_2,z_3)\,-\,G_\nu(z_1,z_4)\,+\,G_\nu(z_2,z_4)
\end{equation*}
is independent of the metric, real-analytic on $\{(z_1,\dots,z_4)\in C^4\mid\{z_1,z_2\}\cap\{z_3,z_4\}=\emptyset\}$,
and harmonic in each variable.  One also considers the exponentiated function
\begin{equation*}
H(z_1,z_2;z_3,z_4) \= e^{G(z_1,z_2;z_3,z_4)} \= \frac{H_\nu(z_1,z_3)H_\nu(z_2,z_4)}{H_\nu(z_2,z_3)H_\nu(z_1,z_4)} 
  \qquad \bigl(\,H_\nu(z,z')\=e^{G_\nu(z,z')}\,\bigr)\,,
\end{equation*}
which is again independent of the metric. In the case~$g=0$, if we identify $C$ with the Riemann sphere $\mathbb{P}^1_{\mathbb{C}}$, then 
$H(z_1,z_2;z_3,z_4)=\Bigl|\dfrac{(z_1-z_3)(z_2-z_4)}{(z_2-z_3)(z_1-z_4)}\Bigr|^2$ is just the absolute value
squared of the cross-ratio.  A more general combination of Green's functions, defined directly on tuples $(C,\bz)$ 
as above, is given by the sum $\textbf{G}(C,\bz;\bs)=\sum_{1\le i<j\le n}s_{ij}G_\nu(z_i,z_j)$, where $\bs=(s_1,\dots,s_n)$
belongs to the $\h n(n-3)\,$-\,dimensional vector space $\X_n$ of $n\times n$ symmetric matrices of (real or complex) numbers with 
vanishing diagonal entries and vanishing row sums.  (The entries of~$\bs$ correspond physically to the
scalar products of the momenta of~$n$ massless particles, which are the Mandelstam variables mentioned in the introduction.)
The fact that $\bs$ has vanishing row (and column) sums implies
that $\textbf{G}(C,\bz;\bs)$ depends only on the complex structure on~$C$ and not on the metric chosen to compute the
individual Green's functions.  Again we also have the corresponding exponentiated function 
$\textbf{H}(C,\bz;\bs)=e^{\textbf{G}(C,\bz;\bs)}=\prod_{1\le i<j\le n}H_\nu(z_i,z_j)^{s_{ij}}$.  The amplitude of interest is then the 
integral (with respect to a suitable metric) of $\textbf{H}(C,\bz;\bs)$ over $[C,\bz]\in\mathfrak{M}_{g,n}$, and is a function of
the Mandelstam variable~$\bs$.  If $n=4$, then $\X_n$ corresponds to the $\X$ defined at the beginning of the article 
by assigning to a matrix $\bs\in\X_4$ its first row $(0\,s\,t\,u)$, which is easily seen to determine the rest, 
and the integral in question becomes a function of the variables~$S$ and~$T$ defined by~(\ref{AB}). For $g=0$, this
integral is essentially equal to the Virasoro function $V^{({\rm cl})}(s,t,u)$ studied in the first part of the paper.

In the case $g=1$, there is a canonical choice of Green's function by taking $\nu$ to be the flat metric, and the
corresponding function~$G_\nu(z_1,z_2)$ depends only on $z=z_1-z_2$ and agrees with the function 
$G(z,\t)$ given in~(\ref{DefGreenfct}) in terms of the Jacobi theta function, where $C=\C/\Lambda_\t$.
In this case we do not have to pass to divisors of degree~0 or use the set~$\X_n$ in order to define the amplitudes, but 
already get an interesting 2-point function, or even 1-point function if we use the translation invariance to place
one of the points at the origin of the elliptic curve.  This leads to the expression $\,\text{Av}_z\bigl(H(z,\t)^s\bigr)
=\sum_\l D_\l(\t)s^\l/\l!$ studied in Section~\ref{Sec:MGF}.

\bigskip

\bibliographystyle{plain}

\begin{thebibliography}{10}

 \bibitem{BGS}
L.~Brink, M.B. Green, and J.H. Schwarz.
\newblock {$N=4$} {Y}ang-{M}ills and {$N=8$} supergravity as limits of string
  theories.
\newblock {\em Nucl. Phys. B}, 198:474--492, 1982.

\bibitem{BMMS}
J.~Broedel, C.R. Mafra, N.~Matthes, and O.~Schlotterer.
\newblock Elliptic multiple zeta values and one-loop superstring amplitudes.
\newblock {\em J. High Energy Phys.}, (7):112, 2015.

\bibitem{BSZ}
J.~Broedel, O.~Schlotterer, and F.~Zerbini.
\newblock {From elliptic multiple zeta values to modular graph functions: open
  and closed strings at one loop}.
\newblock {\em JHEP}, 01:155, 2019.

\bibitem{MMV}
F.~Brown.
\newblock Multiple {M}odular {V}alues and the relative completion of the
  fundamental group of $\mathfrak{M}_{1,1}$.
\newblock {\em arXiv:1407.5167 [math.NT]}, 2014.

\bibitem{BrownSVMZV}
F.~Brown.
\newblock Single-valued motivic periods and multiple zeta values.
\newblock {\em Forum Math. Sigma}, 2, 2014.

\bibitem{BrownNewClass}
F.~Brown.
\newblock A class of non-holomorphic modular forms {I}.
\newblock {\em Res. Math. Sci.}, 5, 2018.

\bibitem{BrownDupont}
F.~Brown and C.~Dupont.
\newblock {Single-valued integration and superstring amplitudes in genus zero}.
\newblock {\em arXiv:1810.07682 [math.NT]}, 2018.

\bibitem{DG19}
E.~D'Hoker and M.~B. Green.
\newblock {Absence of irreducible multiple zeta-values in melon modular graph
  functions}.
\newblock {\em arXiv:1904.06603 [hep-th]}, 2019.

\bibitem{DGGV}
E.~D'Hoker, M.B. Green, \"O. G\"urdo\u{g}an, and P.~Vanhove.
\newblock Modular graph functions.
\newblock {\em Commun. Number Theory Phys.}, 11(1):165--218, 2017.

\bibitem{DGV2015A}
E.~D'Hoker, M.B. Green, and P.~Vanhove.
\newblock On the modular structure of the genus-one type {II} superstring low
  energy expansion.
\newblock {\em JHEP}, 041(08), 2015.

\bibitem{DonagiWitten}
R.~Donagi and E.~Witten.
\newblock Supermoduli space is not projected.
\newblock In {\em String-{M}ath 2012}, volume~90 of {\em Proc. Sympos. Pure
  Math.}, pages 19--71. Amer. Math. Soc., Providence, RI, 2015.

\bibitem{Enriquez}
B.~Enriquez.
\newblock Analogues elliptiques des nombres multiz\'etas.
\newblock {\em Bull. Soc. Math. France}, 144(3):395--427, 2016.

\bibitem{GKZ}
H.~Gangl, M.~Kaneko, and D.~Zagier.
\newblock {Double zeta values and modular forms}.
\newblock In {\em {Automorphic Forms and Zeta Functions, Proceedings of the
  Conference in Memory of Tsuneo Arakawa}}, S.~B\"ocherer, T.~Ibukiyama,
  M.~Kaneko, F.~Sato (eds.), World Scientific, New Jersey, pages 71--106, 2006.

\bibitem{GRV}
M.~B. Green, J.~G. Russo, and P.~Vanhove.
\newblock Low energy expansion of the four-particle genus-one amplitude in type
  {II} superstring theory.
\newblock {\em JHEP}, (0802), 2008.

\bibitem{GV2000}
M.~B. Green and P.~Vanhove.
\newblock The low-energy expansion of the one loop type {II} superstring
  amplitude.
\newblock {\em Phys.Rev.}, D61, 2000.

\bibitem{GSW}
M.B. Green, J.H. Schwarz, and E.~Witten.
\newblock Superstring theory. vol. 2: Loop amplitudes, anomalies and
  phenomenology.
\newblock {\em Cambridge, Uk: Univ. Pr. (Cambridge Monographs On Mathematical
  Physics)}, 596, 1987.

\bibitem{KLT}
H.~Kawai, D.C. Lewellen, and S.H.~H. Tye.
\newblock A relation between tree amplitudes of closed and open strings.
\newblock {\em Nuclear Phys. B}, 269(1):1--23, 1986.

\bibitem{ManinItInt}
Y.I. Manin.
\newblock Iterated integrals of modular forms and noncommutative modular
  symbols.
\newblock In {\em Algebraic geometry and number theory}, volume 253 of {\em
  Progr. Math.}, pages 565--597. Birkh\"auser Boston, Boston, MA, 2006.

\bibitem{SchlSchn}
O.~Schlotterer and O.~Schnetz.
\newblock {Closed strings as single-valued open strings: A genus-zero
  derivation}.
\newblock {\em J. Phys.}, A52(4):045401, 2019.

\bibitem{Shapiro}
J.~A. Shapiro.
\newblock Electrostatic analog for the virasoro model.
\newblock {\em Phys. Lett.}, 33B, 1979.

\bibitem{Stieb2014}
S.~Stieberger.
\newblock Closed superstring amplitudes, single-valued multiple zeta values and
  {D}eligne associator.
\newblock {\em J. Phys. A}, 47, 2014.

\bibitem{VanZerb18}
P.~Vanhove and F.~Zerbini.
\newblock {Closed string amplitudes from single-valued correlation functions}.
\newblock {\em arXiv:1812.03018 [hep-th]}, 2018.

\bibitem{Veneziano}
G.~Veneziano.
\newblock Construction of a crossing-symmetric, {R}egge behaved amplitude for
  linearly rising trajectories.
\newblock {\em Nuovo Cim.}, A57:190--197, 1968.

\bibitem{Virasoro}
M.A. Virasoro.
\newblock Alternative constructions of crossing-symmetric amplitudes with
  {R}egge behaviour.
\newblock {\em Phys. Rev.}, 177:2309--2311, 1969.

\bibitem{ZerbiniThesis}
F.~Zerbini.
\newblock {\em Elliptic multiple zeta values, modular graph functions and
  superstring scattering amplitudes}.
\newblock Diss., Univ. Bonn 2017, Bonn, 2018.

\end{thebibliography}

\end{document}